\newcommand{\StfR}{\mathbb{V}_p(\mathbb{R}^n)}
\newcommand{\R}{\mathcal{R}}
\newcommand{\di}{\mathrm{d}}
\newcommand{\Diffeom}{\mathrm{Diff}^+(I)}
\newcommand{\LieG}{G}
\newcommand{\LieA}{\mathfrak{g}}
\newcommand{\RightTrans}{R}
\newcommand{\SRVT}{\mathcal{R}}
\newcommand{\G}{G}
\newcommand{\g}{\mathfrak{g}}
\newcommand{\h}{\mathfrak{h}}
\newcommand{\m}{\mathfrak{m}}
\newcommand{\M}{\mathcal{M}}
\DeclareMathOperator{\id}{id}
\DeclareMathOperator{\Evol}{Evol}
\DeclareMathOperator{\Diff}{Diff}
\DeclareMathOperator{\GL}{GL}
\DeclareMathOperator{\SO}{SO}
\DeclareMathOperator{\SL}{SL}
\DeclareMathOperator{\Ad}{Ad}
\DeclareMathOperator{\Lf}{\mathbf{L}}
\DeclareMathOperator{\SRp}{\theta}
\DeclareMathOperator{\ev}{ev}
\newcommand{\SSpace}{\mathcal{S}}
\newcommand{\PP}{\mathcal{P}}
\newcommand{\coloneq}{\colonequals}
\newcommand{\Pomega}{\theta_\omega}
\spnewtheorem{prop}[theorem]{Proposition}{\bfseries}{\itshape}
\spnewtheorem{lem}[theorem]{Lemma}{\bfseries}{\itshape}
\spnewtheorem{ex}[theorem]{Example}{\bfseries}{\rmfamily}
\spnewtheorem{thm}[theorem]{Theorem}{\bfseries}{\itshape}
\spnewtheorem{defn}[theorem]{Definition}{\bfseries}{\rmfamily}
\spnewtheorem{rem}[theorem]{Remark}{\bfseries}{\rmfamily} %{\itshape}{\rmfamily}
\spnewtheorem{setup}{\nocaption}[subsection]{\bfseries}{\rmfamily}
\title*{Shape analysis on homogeneous spaces: a generalised SRVT framework}
\author{E.\ Celledoni, S.\ Eidnes, A.\ Schmeding}
\institute{Elena Celledoni \email{elena.celledoni@ntnu.no}\and S\o{}lve Eidnes \email{solve.eidnes@ntnu.no} \and Alexander Schmeding  \email{schmeding@tu-berlin.de}
 \at NTNU Trondheim, Institutt for matematiske fag, 7491 Trondheim, Norway}
\begin{document}

\maketitle

\abstract{Shape analysis is ubiquitous in problems of pattern and object recognition and has developed considerably in the last decade. The use of shapes is natural in applications where one wants to compare curves independently of their parametrisation. %Shapes are in fact  unparametrised curves, evolving on a vector space, on a Lie group or on a manifold. 
One computationally efficient approach to shape analysis is based on the Square Root Velocity Transform (SRVT). In this paper we propose a generalised SRVT framework for shapes on homogeneous manifolds. The method opens up for a variety of possibilities based on  different choices of Lie group action and giving rise to different Riemannian metrics. }
\section{Shapes on homogeneous manifolds}\label{sect: SRVT}

Shapes are  unparametrised curves, evolving on a vector space, on a Lie group or on a manifold. Shape spaces and spaces of curves are infinite dimensional Riemannian manifolds, whose Riemannian metrics are the essential tool to compare and analyse shapes. By combining infinite dimensional differential geometry, analysis and computational mathematics, shape analysis provides a powerful approach to a variety of applications. %in object and pattern recognition.  

In this paper, we are concerned with the approach to shape analysis based on the Square Root Velocity Transform (SRVT), \cite{srivastava11sao}.  This method is effective and computationally efficient. On vector spaces, the SRVT maps parametrised curves  to appropriately scaled tangent vector fields along them. The transformed curves are compared computing geodesics in the $L^2$ metric, and the scaling can be chosen suitably to yield reparametrisation invariance, \cite{srivastava11sao}, \cite{bauer14cri}.  %It is important to recall that the use of the (reparametrisation invariant) 
%One cannot use a %Measuring distances by 
Notably, applying a (reparametrisation invariant) $L^2$ metric directly on the original parametrised curves is not an option as it leads to vanishing geodesic distance on parametrised curves and on the quotient shape space \cite{michor05vgd,MR2891297}. As an alternative, higher order Sobolev type metrics were proposed \cite{michor06rgo}, even though they can be computationally demanding, since computing geodesics in this infinite dimensional Riemannian setting amounts in general to solving numerically partial differential equations. These geodesics are used in practice for finding distances between curves and for interpolation between curves. The SRVT approach, on the other hand, is quite practical because it allows the use of the $L^2$ metric on the transformed curves: distances between curves are just $L^2$ distances of the transformed curves, and geodesics between curves are ``{\it straight lines}" between the transformed curves. It is also possible to prove that this algorithmic approach corresponds (at least locally) to a particular Sobolev type metric, see \cite{bauer14cri, celledoni15sao}. 

In the present paper we propose a generalisation of the SRVT,  from vector spaces and Lie groups, \cite{srivastava11sao, bauer14cri}, to homogeneous manifolds.
This problem has been previously considered for manifold valued curves in \cite{su14sao,brigant}, but our approach is different, the main idea is to take advantage of the Lie group acting transitively on the homogeneous manifold. 
The Lie group action allows us to transport derivatives of curves to our choice of base point in the homogeneous manifold.
Then this information is lifted to a curve in the Lie algebra.
It is natural to require that the lifted curve 
does not depend on the representative of the class used to pull back the curve to the base point.

The main contribution of this paper is the definition of a generalised square root velocity transform framework using transitive Lie group actions for curves on homogeneous spaces. Different choices of Lie group actions will give rise to different metrics on the infinite dimensional manifold of curves on the homogeneous space, with different properties. These different metrics, their  geodesics and associated geometric tools for shape analysis %that 
can all be implemented in the computationally advantageous SRVT framework. 

%This problem has been previously considered for manifold valued curves in \cite{su14sao,brigant}, but our approach is different since it os based on Lie group actions.
We extend previous results for Lie group valued curves and shapes   \cite{celledoni15sao}, to the homogeneous manifold setting. 
Using ideas from the literature on differential equations on manifolds \cite{celledoni03oti}, we describe the main tools necessary for the definition of the SRVT and discuss the minimal requirements guaranteeing that the SRVT is well defined, section~\ref{sect: Con:SRVT}.  
On a general homogeneous manifold, the SRVT is obtained using a right inverse of the composition of the Lie group action with the evolution operator of the Lie group.   If the homogeneous manifold is reductive, there is an explicit way to construct this right inverse (based on a canonical $1$-form for the reductive space, cf.\ \ref{setup: oneform} - \ref{setup: alphared}), see also \cite{munthekaas15ioh}.  We prove smoothness of the defined SRVT in section~\ref{smoothness}.  Detailed examples on matrix Lie groups are provided in section~\ref{examplesmatrices}.

A Riemannian metric on the manifold of curves on the homogeneous space is obtained by pulling back the $L^2$ inner product of curves on the Lie algebra through the SRVT, Theorem~\ref{thm: pbmetric}. To ensure that the distance function obtained on the space of parametrised curves descends to a distance function on the shape space, it is necessary to prove equivariance with respect to the group of orientation preserving diffeomorphisms (reparametrization invariance), these results are presented in section~\ref{equivariance}. 

%Different choices of Lie group actions will give rise to different metrics on the manifold of curves on the homogeneous space, with different properties. This opens up to a variety of possible choices (and metrics), leading to geometric tools that can all be implemented in the computationally advantageous SRVT framework. 
For the case of reductive homogeneous spaces, fixed the Lie group action, two different approaches are considered: one obtained pulling back the curves to the Lie algebra $\mathfrak{g}$ (Proposition~\ref{prop: defn:SRVT:hom}) and one obtained pulling back the curves to the reductive subspace $\mathfrak{m}\subset \mathfrak{g}$ (section~\ref{setup: twist}). The resulting distances are both reparametrization invariant, see Lemmata~\ref{lem: repinv} and \ref{lem: red:repinv}.  For the second approach it follows similarly to what shown in \cite{celledoni15sao} that the geodesic distance is globally defined by the $L^2$ distance, Proposition~\ref{prop: gL2dist}. 
We conjecture that also for general homogeneous manifolds, at least locally, the geodesic distance of the pullback metric is given by the $L^2$ distance of the curves transformed by the SRVT, see end of section~\ref{geometricproperties}. 
To illustrate the performance of the proposed approaches we compute geodesics between curves on the $2$-sphere (viewed as a homogeneous space with respect to the canonical $\SO(3)$-action), see Figure~\ref{fig:appetizer} for an example.
Numerical experiments show that the two algorithms perform differently when applied to curves on the sphere (section~\ref{numericalexperiments}). 
%Based on the results of section~\ref{} and \ref{}, we obtain two different algorithms for the SRVT leading which we test for curves on the two-dimensional sphere in section~\ref{numericalexperiments}.
%We present some results about the realisation of this metric through the SRVT framework in the case of reductive homogeneous spaces.

%We consider concrete examples, namely quotients of real Lie groups (where the subgroups arise by canonically embedding the smaller Lie groups). 

\begin{figure}[htbp]
\begin{center}
\includegraphics[width=0.3\textwidth]{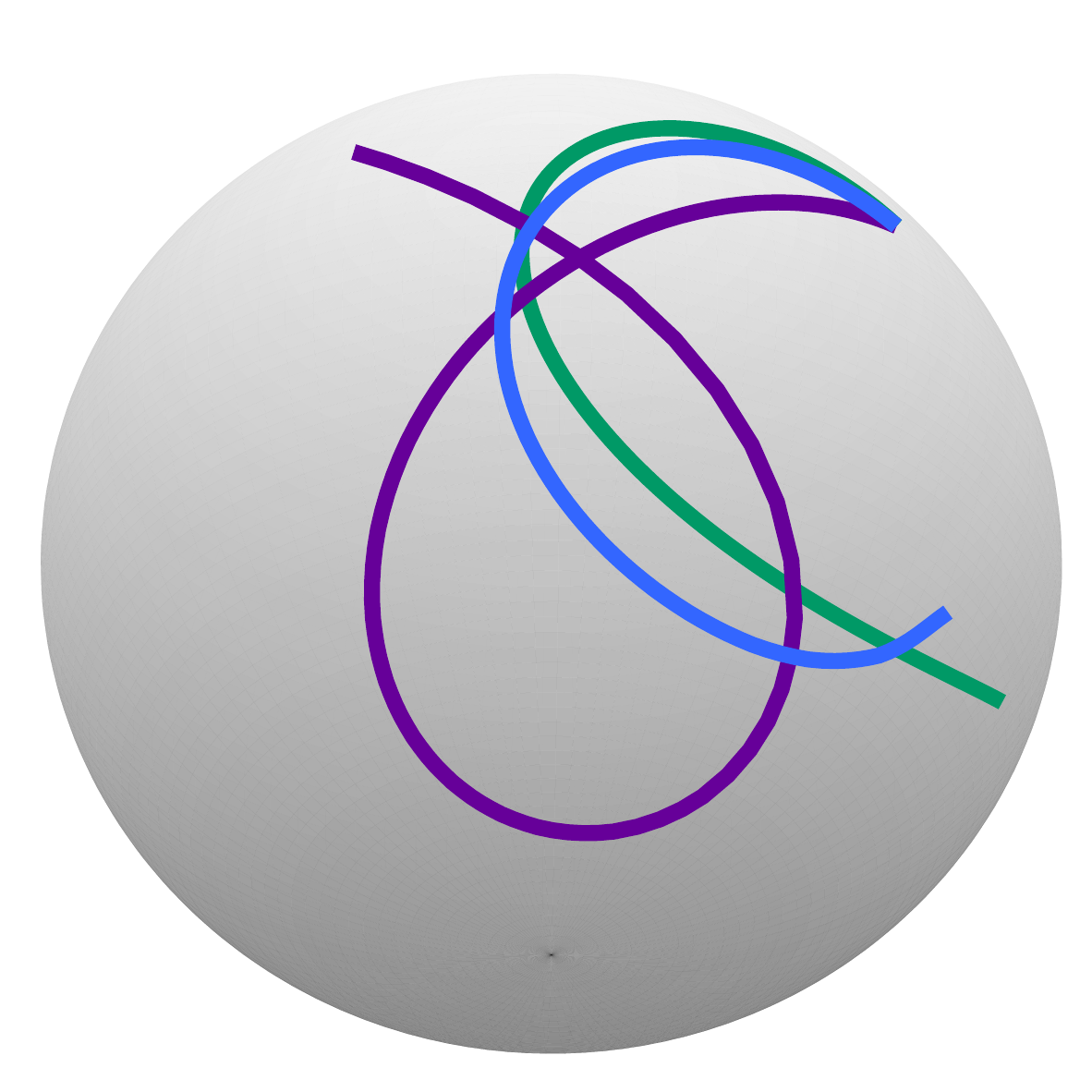}
\includegraphics[width=0.3\textwidth]{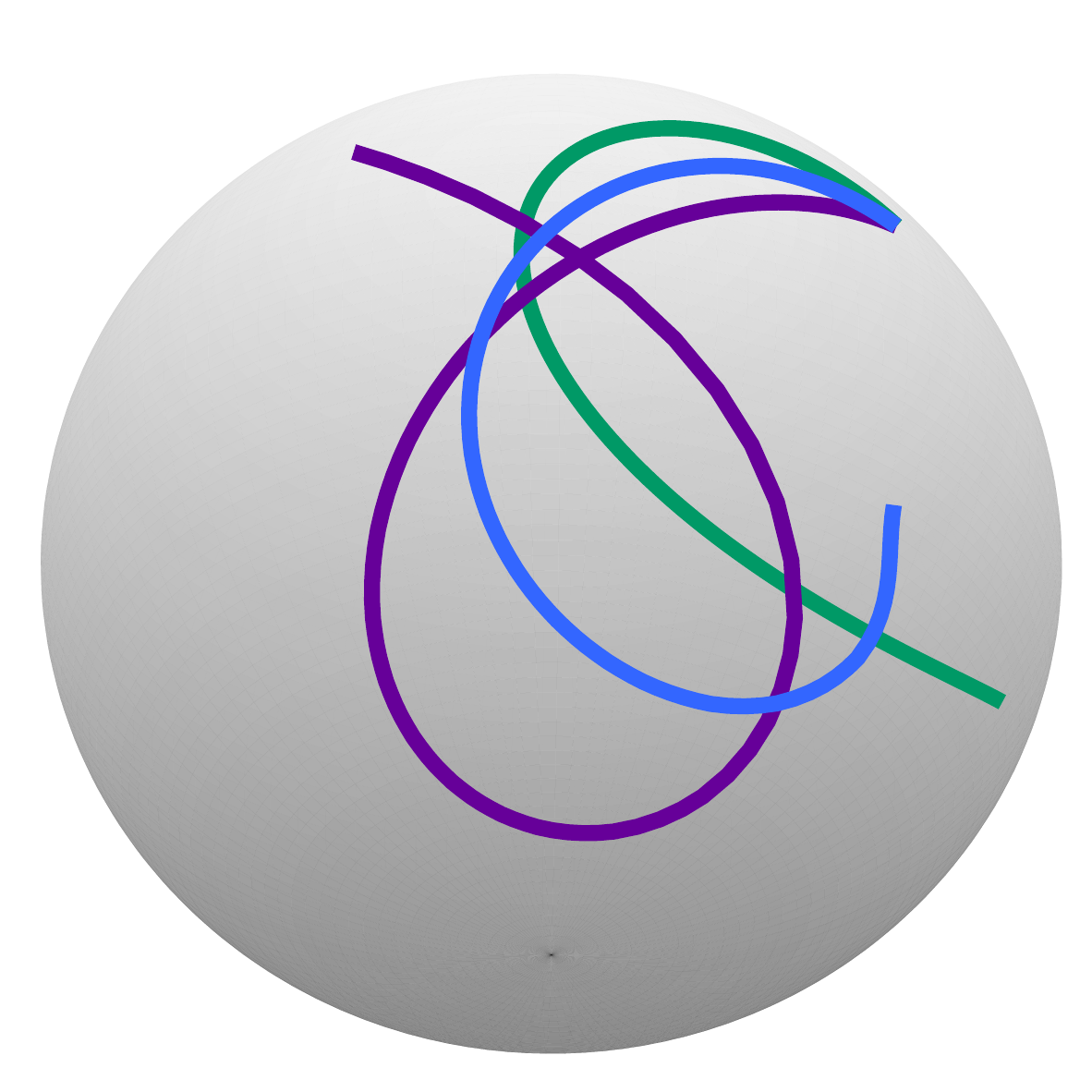}
\includegraphics[width=0.3\textwidth]{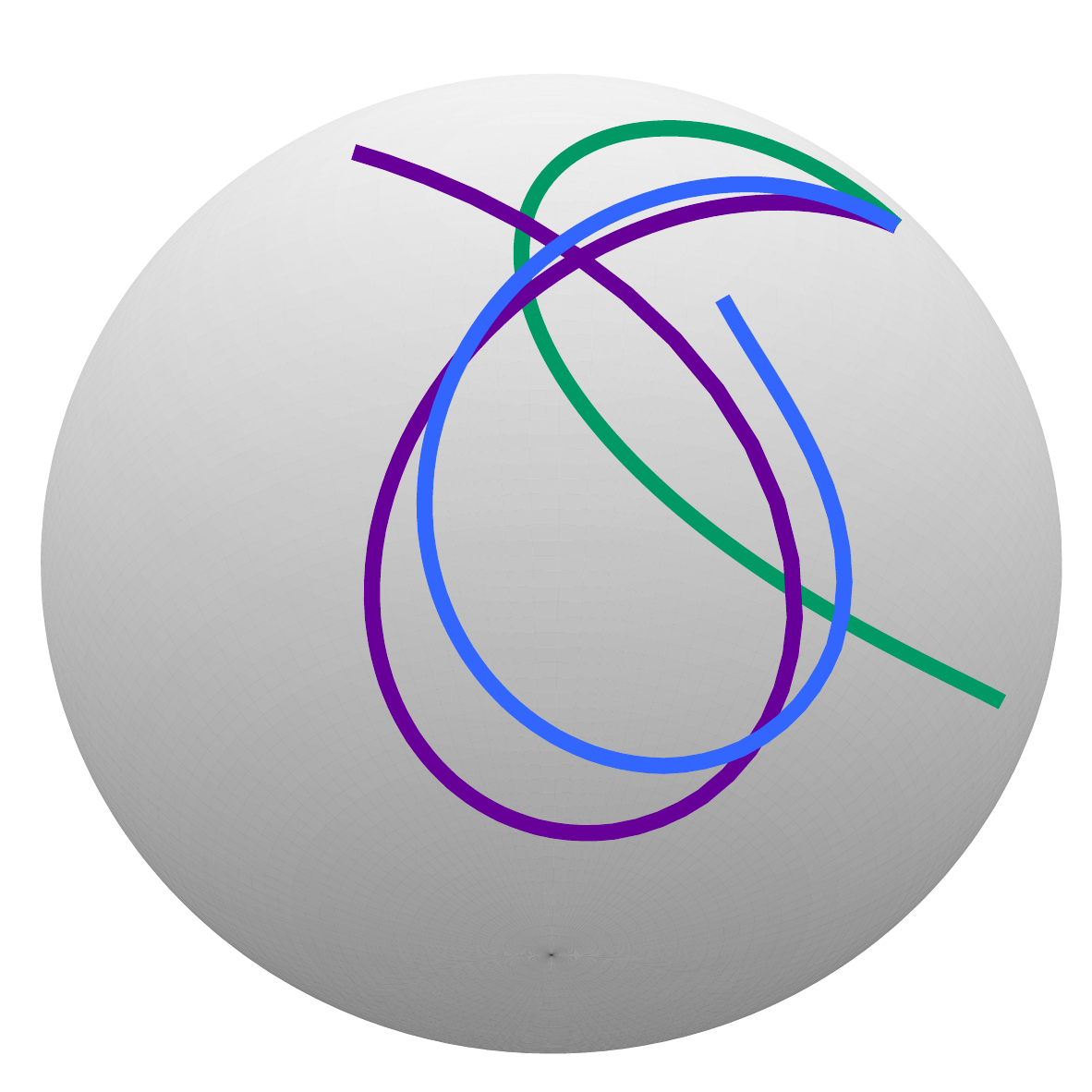}
\caption{The blue curve shows the deformation of the green curve into the purple one along a geodesic $\gamma \colon [0,1] \rightarrow \text{Imm} (I,S^2)$ plotted for the three times $\left\{\frac{1}{4},\frac{1}{2},\frac{3}{4}\right\}$ from left to right.}
\label{fig:appetizer}
\end{center}
\end{figure}
 This work appeared on the arXiv on the 5th of April 2017, later a related but different work from colleagues at Florida State University was completed and posted on the arXiv on the 9th of June 2017. The latter work has now appeared in \cite{SU}, see also the follow up \cite{BKS17}. Moreover, loc.cit.\ treats quotients by compact subgroups focuses on the existence of optimal reparametrisations.
 
\subsection{Preliminaries and notation}
 
 Fix a Lie group $\G$ with identity element $e$ and Lie algebra $\g$.\footnote{ In this paper we assume all Lie groups and Lie algebras to be finite dimensional. 
 Note however, that many of our techniques carry over to Lie groups modelled on Hilbert spaces, \cite{celledoni15sao}.} Denote by $R_g \colon \G \rightarrow \G$ and $L_g \colon G \rightarrow G$ the right resp.\ left multiplication by $g\in \G$.
 Let $H$ be a closed Lie subgroup of $G$ and $\M := G/H$ the quotient with the manifold structure turning $\pi \colon G \rightarrow G/H, g \mapsto gH$ into a submersion (see \cite[Theorem G (b)]{glockner15fos}).
 Then $\M$ becomes a homogeneous space for $G$ with respect to the (transitive) left action: 
 $$\Lambda \colon \G\times \M\rightarrow \M, \quad (g,kH) \mapsto (gk)H.$$ 
  %we have fixed some canonical choice $eH \in \M$.
  For $c_0 \in \M$ we write
$\Lambda(g,c_0)=\Lambda_{c_0}(g)=g. c_0=\Lambda^{g}(c_0)$, i.e.\ $\Lambda_{c_0}:G\rightarrow \mathcal{M}$ (the orbit map of the orbit through $c_0$) and $\Lambda^{g}: \mathcal{M}\rightarrow \mathcal{M}$. 

\begin{setup}
We will consider smooth curves on $\M$ and describe them using the Lie group action. Namely for $c \colon [0,1] \rightarrow \M$ we choose a smooth lift $g \colon [0,1] \rightarrow G$ of $c$, i.e.:
$$c(t)=g(t). c_0,\quad c_0\in\M, \quad t\in[0,1] \quad (\text{the dot denotes the action of } G \text{ on } G/H).$$
In general, there are many different choices for a smooth lifts $g$.\footnote{Every homogeneous space $G/H$ is a principal $H$-bundle, whence there are smooth horizontal lifts of smooth curves (depending on some choice of connection, cf.\ e.g.\ \cite[Chapter 5.1]{MR2021152}).}  For brevity we will in the following write $I := [0,1]$.
\end{setup}

Later on we consider smooth functions on infinite-dimensional manifolds beyond the realm of Banach manifolds. Hence the standard definition for smooth maps (i.e.\ the derivative as a (continuous) map to a space of continuous operators) breaks down.
We base our investigation on the so called Bastiani calculus (see \cite{bastiani64}): A map $f \colon E \supseteq U \rightarrow F$ between Fr\'{e}chet spaces is smooth if all iterated directional derivatives exist and glue together to continuous maps.\footnote{In the setting of manifolds on Fr\'{e}chet spaces (with which we deal here) our setting of calculus is equivalent to the so called convenient calculus (see \cite{MR1471480}).  Convenient calculus defines a map $f$ to be smooth if it ``maps smooth curves to smooth curves", i.e.\ $f\circ c$ is smooth for any smooth curve $c$. This yields a calculus on infinite-dimensional spaces where smoothness does not necessarily imply continuity (though this does not happen on Fr\'{e}chet spaces), we refer to \cite{MR1471480} for a detailed exposition.
Note that both calculi can handle smooth maps on intervals $[a,b]$, see e.g.\ \cite[1.1]{hgreg2015} and \cite[Chapter 24]{MR1471480}.}
\begin{setup}
Let $M$ be a (possibly infinite-dimensional) manifold. By $C^\infty (I,M)$ we denote smooth functions from $I$ to $M$. Recall that the topology on these spaces, the compact-open $C^\infty$-topology, allows one to control a function and its derivatives. This topology turns $C^\infty (I,M)$ into an infinite-dimensional manifold (see e.g.\ \cite[Section 42]{MR1471480}).

Denote by $\mathrm{Imm} (I,M) \subseteq C^\infty (I,M)$ the set of smooth immersions (i.e.\ smooth curves $c \colon I \rightarrow M$ with $\dot{c} (t)\neq 0$) and recall from \cite[41.10]{MR1471480} that $\mathrm{Imm} (I,M)$ is an open subset of $C^\infty (I,M)$.
\end{setup}

\begin{setup}\label{setup: Evol}
 We further denote by $\Evol$ the evolution operator, which is defined as
\begin{align*}
&\Evol: C^\infty(I, \mathfrak{g}) \rightarrow \lbrace g \in C^\infty(I, G): g(0) = e \rbrace =: C^\infty_* (I, \LieG)\\
&\Evol (q) (t) := g(t) \qquad \text{where} \qquad 
\left\{\begin{array}{ccl}
\frac{\di }{\di t}\,g &=&\RightTrans_{g(t)*}(q(t)),\\[0.2cm]
g(0)&=&e
\end{array}\right.
\end{align*} and %$I \subset \mathbb{R}$ is an interval with $0 \in I$ and
$\RightTrans_{g*} = T_e R_g$ is the tangent of the right translation.%at the identity $e$.
%%%%%%%%%%%%%%%%%%%%%%%
% Consider replacing $(\RightTrans_{g})_*$ with $T_e R_g$ everywhere in the final version
%%%%%%%%%%%%%%%%%%%%%%%
%
%Without loss of generality, we let $I=[0,1]$.
Recall from \cite[Theorem A]{hgreg2015} that $\Evol$ is a diffeomorphism with inverse the \emph{right logarithmic derivative} 
  \begin{align*}
   \delta^r \colon C^\infty_* (I, \LieG) \rightarrow C^\infty (I,\LieA), \quad
   \delta^r g  := R_{g^*}^{-1} (\dot{g}).
  \end{align*}
\end{setup}

 \begin{setup}
 	We fix a Riemannian metric $(\langle \cdot, \cdot \rangle_g)_{g \in G}$ on $G$ which is right $H$-invariant (i.e.\ the maps $R_h, h \in H$ are Riemannian isometries). 
 	 Since $\mathcal{M}=G/H$ is constructed using the right $H$-action on $G$, an $H$-right invariant metric descends to a Riemannian metric on $\mathcal{M}$. We refer to \cite[Proposition 2.28]{MR2088027} for details and will always endow the quotient with this canonical metric to relate the Riemannian geometries. 
 
 Hence $H$-right invariance should be seen as a minimal requirement for the metric on $G$. Note that a natural way to obtain (right) invariant metrics is to transport a Hilbert space inner product from the Lie algebra by (right) translation in the group. This method yields a $G$-right invariant metric and we will usually work with such a metric induced by $\langle \cdot,\cdot\rangle$ on $\g$. Albeit it is very natural, $G$-invariance does not immediately add any benefits.
 In the following table we record properties of $H$, the Riemannian metric and of the canonical $G$-action on the quotient.
	\begin{table}[h!]
 		\centering
 		\caption{Riemannian metrics and dsectionecompositions of the Lie algebra}
 		\label{tab:table1}
 		\noindent\begin{tabular}{|c | l | l | l | l|}
 			\hline {\bf $H$ / $\h$} & {\bf metric on $G$ } & {\bf special decompositions of $\mathfrak{g}$} &  {\bf $G$-action on $\mathcal{M}$} \\ \hline%& {\bf equivariance} \\
 			&&&\\
 			compact & $G$-left invariant,  & $\mathfrak{g}=\mathfrak{h}\oplus \mathfrak{h}^\perp$, the orthogonal     &  by isometries  \\%& yes \\
 			&  $H$-biinvariant &  complement $\mathfrak{h}^\perp$ is $\Ad(H)$-invariant &  \\ \hline%& \\
 			&&&\\
 			compact  & $G$-right invariant,& as above & only $H$ acts \\
 			&  $H$-biinvariant & &  by isometries \\ 
 			\hline &&&\\
 		 \ \	admits reductive \ \ \mbox{} & $G$-right invariant  & $\mathfrak{g}=\mathfrak{h}\oplus \mathfrak{m}$, $\m$ is $\Ad (H)$-invariant & not by isometries \\%  & yes\\
 			complement in $\g$\footnotemark & & $\g =\h \oplus \h^\perp$, where in general $\m \neq \h^\perp$&  \\ %& \\
 			\hline &&& \\ 
 			& $G$-right invariant & $\mathfrak{g}=\mathfrak{h}\oplus \mathfrak{h}^\perp$  but $\mathfrak{h}^\perp$ is not  & not by isometries\\% & no \\
 			& & $\Ad (H)$ invariant & \\ \hline
 		\end{tabular}\\
 	\end{table}
\footnotetext{$\h = \Lf (H)$ admits a \emph{reductive complement} $\m$, if $\m$ is an $\Ad (H)$-invariant subspace and $\g  = \h \oplus \m$ as vector spaces, cf.\ \ref{setup: adj}. Then $\mathcal{M}=G/H$ is a reductive homogeneous space.}
\end{setup}

 \begin{setup} Let $f \colon M \rightarrow N$ be a smooth map and denote postcomposition by
 	$$\theta_f \colon C^{\infty}(I, M)\rightarrow C^{\infty}(I,N),\qquad c\mapsto f\circ c.
 	$$
 Note that $\theta_f$ is smooth as a map between (infinite-dimensional) manifolds. 
 \end{setup}

% \begin{rem}
% \begin{setup}
% We will denote the differentiation map by $D \colon C^\infty (I,\G) \rightarrow C^\infty (I,T\G), D(c) := \dot{c}$.
% Recall from \cite[Section 38]{MR1471480} that the right-logarithmic derivative arises as the composition of the differential $D$ with the (right) Maurer-Cartan form 
%    \begin{displaymath}
%     \kappa^r \colon TG \rightarrow \g , \qquad v_g \mapsto (R_g^{-1})_* (v_g)\quad (\text{where } v_g \in T_g G).
%    \end{displaymath}
%    	Then $\delta^r  = \theta_{\kappa^r} \circ D$, whence $ \theta_{\kappa^r}$ %$\alpha = \theta_{\kappa^r}$
%	 is just postcomposition with a certain Lie algebra valued $1$-form.   
%  %  	In Section \ref{sect: SRVT:reductive}, we will see that for reductive homogeneous spaces, the transport $\alpha$ can always be chosen as postcomposition by a canonical $1$-form induced by the reductive structure.
%\end{setup}
 %\end{rem}

 \begin{setup}[The SRVT on Lie groups]\label{setup:SRVT}
  For a Lie group $\G$ with Lie algebra $\g$, consider an immersion $c \colon I \rightarrow \G$.
 The square root velocity transform of $c$ is
 \begin{equation}\label{SRVT: LIE}
 \R \colon \mathrm{Imm} (I,G) \rightarrow C^\infty (I, \g\setminus \{0\}), \qquad \R (c ) = \frac{\delta^r (c)}{\sqrt{\norm{\dot{c}}}} = \frac{\left(R_{c(t)}^{-1}\right)_* (\dot{c})}{\sqrt{\norm{\dot{c}}}} ,
 \end{equation}
 where the norm $\|\cdot \|$ is induced by a right $G$-invariant Riemannian metric, \cite{celledoni15sao}.  
% The SRVT allows us to compare derivatives of curves by transporting them to the Lie algebra. 
% Observe that we lose information by applying the SRVT, as the right-logarithmic derivative forgets the starting point of the curve $c$. 
%\end{example}
 The SRVT consists of the composition of three maps: 
 \begin{itemize}
\item  {\it differentiation} $D \colon C^\infty (I,\G) \rightarrow C^\infty (I,T\G), D(c) := \dot{c}$, 
\item {\it transport} $ \alpha \colon C^\infty (I, T\G) \rightarrow C^\infty (I, \g),\quad  \gamma \mapsto (R_{\pi_{TG} \circ \gamma}^{-1})_* (\gamma)$ and 
\item {\it scaling}  $\mathrm{sc} \colon C^\infty (I,\mathfrak{g} \setminus \{0\}) \rightarrow C^\infty (I,\mathfrak{g}\setminus \{0\}),\quad   q \mapsto \left(t \mapsto \frac{q(t)}{\sqrt{\|q(t)\|}}\right)$.
\end{itemize}
 The scaling by the square root of the norm of the velocity is crucial to obtain a parametrisation invariant Riemannian metric, see \cite{celledoni15sao} and Lemma \ref{lem: repinv}. 
 \end{setup}
%To generalise the SRVT, we will fix some minimal requirements for $\alpha$ (see Section \ref{sect: Con:SRVT}) and construct $\alpha$ for certain classes of examples.
  
\section{Definition of the SRVT for homogeneous manifolds}\label{sect: Con:SRVT}

%In this section we consider a homogeneous space $\mathcal{M} = \G / H$ and denote the left action of $\G$ on $\mathcal{M}$ by $\Lambda \colon \G \times \mathcal{M } \rightarrow \mathcal{M}$.
Our aim is to construct the SRVT for curves with values in the homogeneous manifold $\mathcal{M}$.
%As we have seen in , the square root velocity transform for Lie groups can be defined using the right-logarithmic derivative. 
It was crucial in our investigation of the Lie group case \cite{celledoni15sao} that the right-logarithmic derivative inverts the evolution operator, see~\ref{setup: Evol}. 
To mimic this behaviour we introduce a version of the evolution for homogeneous manifolds. 

\begin{defn}
 Fix $c_0 \in \mathcal{M}$ and denote by $C^\infty_{c_0} (I,\mathcal{M})$ all smooth curves $c \colon I \rightarrow \mathcal{M}$ with $c(0)=c_0$.
 Then we define
 \begin{displaymath}
  \rho_{c_0}\colon C^{\infty}(I,\mathfrak{g}) \rightarrow C^{\infty}_{c_0}(I,\mathcal{M}), \quad \rho_{c_0}(q)= \Lambda_{c_0} (\Evol (q) (t)) = \Lambda (\Evol(q)(t),c_0).
 \end{displaymath}
\end{defn}

\begin{rem}\label{rem: gen:EVOL}
Fix $q \in C^\infty (I,\g)$ and $c_0 \in \mathcal{M}$ and denote by $g(t) = \Evol(q)(t)$.
Then  
\begin{displaymath}
\rho_{c_0}(q) := c(t) \qquad \text{where}\qquad \left\{\begin{array}{ccl}
   \frac{\dif}{\dif t}c(t)&=& T_e\Lambda_{c(t)}(\,q(t)\,),\\[0.2cm]
   c(0) &=& c_0.
  \end{array}\right.
\end{displaymath}
\end{rem}
\begin{proof}
In fact
\begin{align*}
 \frac{\dif}{\dif t}\rho_{c_0}(q)(t) &= T_{g(t)} \Lambda_{c_0} \left(\frac{\dif}{\dif t} g(t)\right) =  T_{g(t)} \Lambda_{c_0}((R_{g(t)})_*(q(t))) =
 T_{g(t)} \Lambda_{c_0}\circ (R_{g(t)})_{*}\,(q(t))\\
&= T_e(\Lambda_{c_0}\circ R_{g(t)})(q(t)) 
=T_e \Lambda_{\Lambda_{c_0} (g(t))} (q(t)) 
  = T_e \Lambda_{\rho_{c_0} (q)(t)} (q(t)),
\end{align*}
with
$T_{g(t)}\Lambda_{c_0} \colon T_{g(t)} G\rightarrow T_{\Lambda_{c_0}(g(t))}\mathcal{M} = T_{\rho_{c_0}(q)(t)} \mathcal{M},\qquad T_e \Lambda_{c(t)} \colon \mathfrak{g}\rightarrow T_{c(t)}\mathcal{M}.$
\end{proof}

Hence we can interpret $\rho_{c_0}$ as a version of the evolution operator $\Evol$ for homogeneous manifolds.
\begin{ex}
Consider the two dimensional unit sphere $\mathcal{M}=S^2$ in $\mathbb{R}^3$.  Consider the action of $\SO(3)$ on $S^2$ by matrix-vector multiplication: $\Lambda: \SO(3)\times S^2\rightarrow S^2$, $\Lambda(Q,u)=Q\cdot u$. Assume $c_0:=e_1$ the first canonical vector in $\mathbb{R}^3$, then given a curve in the Lie algebra of skew-symmetric matrices $q(t)\in \mathfrak{so}(3)$,  $\rho_{e_1}(q(t))=y(t)$, where $y(t)$ satisfies $\dot{y}=q(t)y$ with $y(0)=e_1$.
\end{ex}

%Hence we can interpret $\rho_{c_0}$ as a version of the evolution operator $\Evol$ for homogeneous manifolds.
%If we view the group $\G$ as a homogeneous manifold via left multiplication (see Example \ref{ex: LIEGP}), then we even have $\rho_e = \Evol$. 
%However, contrary to $\Evol$, $\rho_{c_0}$ is not invertible if the subgroup $H$ (with $\mathcal{M} = \G/H$) is non-trivial. 

We want to construct a section of the submersion $\rho_{c_0}$ to mimic the construction for Lie groups, see also \cite[Proposition 2.2]{celledoni03oti}. 
As we have seen in the Lie group case, the SRVT factorises into a derivation map, a map transporting the derivative to the Lie algebra and a scaling in the Lie algebra. 
For homogeneous spaces, we can make sense of this procedure if we can replace the transport from the Lie group case by a map which transports derivatives from the tangent bundle of the homogeneous manifold to the Lie algebra. Thus we search for a map $\alpha \colon C^\infty (I,T\mathcal{M}) \rightarrow C^\infty (I,\g)$ such that the following diagram commutes:
\begin{displaymath}
 \begin{xy}
  \xymatrix{
        C^\infty_{c_0} (I,\mathcal{M}) \ar@/_ 1cm/[rrr]^{\id_{ C^\infty_{c_0} (I,\mathcal{M})}} \ar[r]^D & C^\infty (I,T\mathcal{M}) \ar[r]^\alpha & C^\infty (I,\g) \ar[r]^{\rho_{c_0}} & C^\infty_{c_0} (I,\mathcal{M})   \\
        &&&&
  }
\end{xy}
\end{displaymath}
Moreover, in the Lie group case we see that the mapping $\alpha \circ D$ maps the submanifold of immersions into the subset $C^\infty (I,\mathfrak{g} \setminus \{ 0\})$. We will require this property in general, as derivatives of immersions should vanish nowhere and this property should be preserved by the transport $\alpha$. %To shorten the notation we will from now on use $D(c) = \dot{c}$.
The next definition details necessary properties of $\alpha$.

\begin{defn}[Square root velocity transform]\label{defn:SRVT}
Let $c_0 \in \mathcal{M}$ be fixed and define the closed submanifold\footnote{As $\mathrm{Imm} (I,\mathcal{M}) \subseteq C^\infty (I,\mathcal{M})$ is open and the evaluation map $\text{ev}_{0} \colon \mathrm{Imm} (I,\mathcal{M}) \rightarrow \mathcal{M}$ is a submersion, $\PP_{c_0} = \text{ev}_0^{-1} (c_0)$ is a closed submanifold of $\mathrm{Imm} (I,\mathcal{M})$ (cf.\ \cite{glockner15fos}).} $\PP_{c_0} := \{ c \in \mathrm{Imm} (I,\mathcal{M}) \mid c(0)=c_0\} = \mathrm{Imm} (I,\mathcal{M}) \cap C^\infty_{c_0} (I,\mathcal{M})$ of $C^\infty (I,\M)$. 
Assume there is a smooth $\alpha\colon C^{\infty}(I,T\M)\rightarrow C^\infty (I,\g)$, such that 
%
%the following diagram is commutative
%\begin{equation}\label{alpha: cond} 
% \begin{xy}
%  \xymatrix{
%     \PP_{c_0} \ar@{.>}[rr] \ar[d] && C^\infty (I,\g \setminus \{0\}) \ar[d] &   \\
%        C^\infty_{c_0} (I,\mathcal{M}) \ar@/_ 1cm/[rrr]^{\id_{ C^\infty_{c_0} (I,\mathcal{M})}} \ar[r]^D & C^\infty (I,T\mathcal{M}) \ar[r]^\alpha & C^\infty (I,\g) \ar[r]^{\rho_{c_0}} & C^\infty_{c_0} (I,\mathcal{M})   \\
%        &&&&
%  }
%\end{xy}
%\end{equation}
%Here the vertical arrows are just inclusion and the dashed arrow is the restriction of $\alpha \circ D$.
\begin{align}
\rho_{c_0} \circ \alpha \circ D = \id_{C^\infty_{c_0} (I,\mathcal{M})} \text{ and }  \label{alpha: cond1} \\
\alpha \circ D (\PP_{c_0}) \subseteq C^\infty (I,\LieA \setminus \{0\}).\label{alpha: cond2}
\end{align} 
Then we define the \emph{square root velocity transform} on $\mathcal{M}$ at $c_0$, with respect to $\alpha$ as
$$ \mathcal{R} \colon \PP_{c_0} \rightarrow C^\infty (I,\g \setminus \{0\}),\quad \mathcal{R}(c):=\frac{\alpha(\dot{c}) }{ \sqrt{ \| \alpha(\dot{c}) \|}},$$
where $\|\cdot\|$ is the norm induced by the right invariant Riemannian metric on the Lie algebra.
We will see in Lemma \ref{lem: SRVT:diff} that $\SRVT$ is smooth.
\end{defn}

%Before we discuss the square root velocity transform, let us digress and comment on the conceptual framework of the SRVT.
The SRVT allows us to transport curves (via $\alpha$) from the homogeneous manifold to curves with values in a fixed vector space (i.e.\ the Lie algebra $\g$). 
{\it The crucial property here is that $\alpha \circ D$ is a right-inverse of $\rho_{c_0}$}, and we note that our construction depends strongly on the choice of the map $\rho_{c_0}$. 

\begin{ex} \label{Liegroupex} Let $\G$ be a Lie group and $H = \{e\}$  the trivial subgroup (with $e$ the Lie group identity). Then $\G = \G/\{e\}$ is a  homogeneous manifold
and $\rho_e = \Evol$. Taking $\alpha(v)=(R_g^{-1})_* (v) $, we reproduce the definition of the SRVT on Lie groups~\ref{setup:SRVT}.
However, contrary to $\Evol$, $\rho_{c_0}$ is not invertible if the subgroup $H$ (with $\mathcal{M} = \G/H$) is non-trivial, but we might still be able to find a right inverse. 
\end{ex}

\begin{ex}
We have $T_uS^2:=\{ v\in \mathbb{R}^3\, | \,  v\cdot u=0\}$ where we have denoted with ``$\,\cdot \,$'' the Euclidean inner product in $\mathbb{R}^3$. Then we can write 
$$v=(vu^T-uv^T)u,\qquad \forall v\in T_uS^2$$ and we can define the map
$$\alpha:v\in T_uS^2 \mapsto vu^T-uv^T\in \mathfrak{so}(3).$$
For $c$ a curve evolving on $S^2$ with $c(0)=e_1$, we have $\rho_{e_1}(\alpha(\dot{c}))=c,$
so $\alpha \circ D$ is the right inverse of $\rho_{e_1}$. The SRVT is then
$$\mathcal{R}(c)=\frac{\dot{c}c^T-c\dot{c}^T}{\sqrt{\|\dot{c}c^T-c\dot{c}^T\|}},$$
and $\| \cdot\|$ is the norm deduced by the usual Frobenius inner product of matrices (the scaled negative Killing form in $\mathfrak{so}(3)$ see table in example~\ref{ex: semisimple}). See section~\ref{examplesmatrices} and~\ref{numericalexperiments}, for further details and more examples.
\end{ex}

%This framework can be applied even beyond the setting of homogeneous spaces\footnote{For example on a Riemannian manifold one may choose $\rho_{c_0}$ to be a {\color{red} certain} {\color{blue} should be more precise or leave it out} evolution operator and $\alpha$ to be parallel transport.
%These choices lead to the square root velocity transform as discussed in \cite{su14sao}.}.\medskip

The definition of $\alpha$ and the SRVT in Definition \ref{defn:SRVT} depend on the initial point $c_0 \in \mathcal{M}$. 
In many cases our choices of $\alpha$ satisfy \eqref{alpha: cond1} for every $c_0 \in \mathcal{M}$, i.e.\ $\alpha$ satisfies 
  \begin{displaymath}
   \rho (c (0), \alpha (\dot{c})):= \rho_{c(0)} (\alpha (\dot{c})) = c \quad \text{for all } c \in C^\infty (I,\mathcal{M}). 
  \end{displaymath}
Further, the SRVT also depends on the choice of the left-action $\Lambda \colon \G \times \mathcal{M} \rightarrow \mathcal{M}$.
A different action will yield a different SRVT. 
For example, there are several ways to interpret a Lie group as a homogeneous manifold with respect to different group actions.
One of these recovers exactly the SRVT from \cite{celledoni15sao} (see Example \ref{Liegroupex}).
See \cite[Section 5.1]{munthekaas15ioh} for more information on Lie groups as homogeneous spaces, e.g.\ by using the Cartan-Schouten action.

\begin{rem}
Fix $c\in C^\infty (I,\mathcal{M})$ to obtain a smooth map $\Lambda_c \colon C^\infty (I,G) \rightarrow C^\infty (I,\mathcal{M})$, $f \mapsto (t \mapsto \Lambda (f,c) (t)$ \cite[Corollary 11.10 1. and Theorem 11.4]{MR583436}. 
Further we recall from \cite[Theorem 42.17]{MR1471480} that $C^{\infty}(I,T\M)\cong TC^{\infty}(I,\M)$. Identifying the tangent space over the constant  $e \colon I \rightarrow G$ (taking everything to the unit) we obtain 
\[
 T_e \Lambda_{c} \colon C^\infty (I,\g)\rightarrow  T_cC^{\infty}(I,\M), \quad q \mapsto \left( t \mapsto T_e \Lambda_{c(t)} (q(t)) \right).
\]
If $T_e \Lambda_{c}$ was invertible (which it will not be in general), we could use it to define $\alpha$. 
%But in general $T_e \Lambda_{c}$  is not invertible unless the Lie group action is free (cf.\ Remark \ref{rem: gen:EVOL}).
\end{rem}
 
%\subsection{Examples} 

\subsection{Smoothness of the SRVT}\label{smoothness}
One of the most important properties of the square root velocity transform is that it allows us to transport curves from the manifold to curves in the Lie algebra, and this operation is smooth and invertible. The details are summarised in the following two lemmata.
Following \cite[Lemma 3.9]{celledoni15sao}, we consider the smooth scaling maps %(see \cite[Lemma 3.9]{celledoni15sao} )
  \begin{equation} \label{eq: scaling}\begin{aligned}
                     \mathrm{sc} \colon C^\infty (I,\mathfrak{g} \setminus \{0\}) &\rightarrow C^\infty (I,\mathfrak{g}\setminus \{0\}),\quad   q \mapsto \left(t \mapsto \frac{q(t)}{\sqrt{\|q(t)\|}}\right),  \\
                     \mathrm{sc}^{-1} \colon C^\infty (I,\mathfrak{g} \setminus \{ 0\})&\rightarrow C^\infty (I,\mathfrak{g} \setminus \{ 0\}),\quad q \mapsto (t \mapsto q(t) \| q(t)\|).
                   \end{aligned}
  \end{equation}

\begin{lem}\label{lem: rhoprop} Fix $c_0 \in \mathcal{M}$, then
\begin{enumerate}
 \item $C^\infty_{c_0} (I,\mathcal{M})$ is a closed and split submanifold\footnote{A submanifold $N$ of a (possibly infinite-dimensional) manifold $M$ is called \emph{split} if it is modeled on a closed subvectorspace $F$ of the model space $E$ of $M$, such that $F$ is complemented, i.e.\ $E = F \oplus G$ as topological vector spaces (see \cite[Section 1]{glockner15fos}).} of $C^\infty (I,\mathcal{M})$,
 \item $\rho_{c_0} \colon C^{\infty}(I,\mathfrak{g}) \rightarrow C^{\infty}_{c_0}(I,\mathcal{M})$ is a smooth surjective submersion. \label{lem: rhoprop_b}
\end{enumerate}
\end{lem}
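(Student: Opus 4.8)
The plan is to treat the two assertions separately, both relying on the fact that $\pi\colon G\to\mathcal M=G/H$ is a smooth surjective submersion with the chosen manifold structure, and on general results about mapping spaces $C^\infty(I,-)$ being functorial with respect to submersions and taking split submanifolds to split submanifolds.

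For (1), I would first recall that $C^\infty_{c_0}(I,\mathcal M)$ is by definition the preimage of the point $c_0$ under the evaluation map $\ev_0\colon C^\infty(I,\mathcal M)\to\mathcal M$, $c\mapsto c(0)$. The key point is that $\ev_0$ is a smooth submersion: this follows because it admits smooth local sections (given a chart around a point $m\in\mathcal M$ one sends $m'$ to a suitable constant-speed path, or simply uses that $\ev_0$ has a smooth right inverse given by constant curves, $m\mapsto(t\mapsto m)$, which already shows it is a submersion). Since a point is trivially a split submanifold of the finite-dimensional manifold $\mathcal M$, and preimages of split submanifolds under submersions between manifolds modelled on suitable spaces are again split submanifolds, $\ev_0^{-1}(c_0)=C^\infty_{c_0}(I,\mathcal M)$ is a closed split submanifold of $C^\infty(I,\mathcal M)$; closedness is clear since $\ev_0$ is continuous and $\{c_0\}$ is closed. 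This is essentially the same argument as the one given in the footnote to Definition~\ref{defn:SRVT} for $\PP_{c_0}$, applied to $C^\infty(I,\mathcal M)$ in place of $\mathrm{Imm}(I,\mathcal M)$.

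For (2), recall from~\ref{setup: Evol} that $\Evol\colon C^\infty(I,\g)\to C^\infty_*(I,G)$ is a diffeomorphism, and that $\rho_{c_0}=\theta_{\Lambda_{c_0}}\circ\Evol$, i.e.\ $\rho_{c_0}(q)(t)=\Lambda_{c_0}(\Evol(q)(t))$. Since postcomposition with a smooth map is smooth, $\rho_{c_0}$ is smooth. It therefore suffices to show that $\theta_{\Lambda_{c_0}}\colon C^\infty_*(I,G)\to C^\infty_{c_0}(I,\mathcal M)$ is a smooth surjective submersion. The orbit map $\Lambda_{c_0}\colon G\to\mathcal M$ is a surjective submersion because $\mathcal M=G/H$ with the submersion $\pi$, and $\Lambda_{c_0}$ factors as $\Lambda_{c_0}=\Lambda^{g_0}\circ\pi$ up to the identification $c_0=g_0H$ — more directly, $\Lambda_{c_0}$ is just $\pi$ composed with a diffeomorphism of $\mathcal M$, hence a submersion. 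A submersion between finite-dimensional manifolds admits smooth local sections; postcomposition $\theta_{\Lambda_{c_0}}$ then inherits local sections on the level of curve spaces (compose a curve in $\mathcal M$ locally with the local section of $\Lambda_{c_0}$, using that a curve on a compact interval has image in finitely many chart domains, or invoke the standard result that $C^\infty(I,-)$ sends submersions to submersions, e.g.\ via \cite{glockner15fos} or \cite{MR1471480}). One must check that the lift can be arranged to start at $e$ so that it lands in $C^\infty_*(I,G)$ and maps to $C^\infty_{c_0}(I,\mathcal M)$: given $c$ with $c(0)=c_0$, pick any smooth lift $\tilde g$ with $\tilde g(0)\in\Lambda_{c_0}^{-1}(c_0)$, which contains $e$; translating by a local section argument one gets $\tilde g(0)=e$. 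Surjectivity of $\rho_{c_0}$ then follows since $\Evol$ is onto $C^\infty_*(I,G)$ and $\theta_{\Lambda_{c_0}}$ is onto.

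\textbf{Main obstacle.} The routine parts are the functoriality statements (postcomposition is smooth; $\ev_0$ and $\Lambda_{c_0}$ are submersions). The genuinely delicate point is promoting ``$\Lambda_{c_0}$ is a submersion'' to ``$\theta_{\Lambda_{c_0}}$ is a submersion of the infinite-dimensional curve spaces'', i.e.\ producing smooth local sections of $\theta_{\Lambda_{c_0}}$ and ensuring the basepoint condition $g(0)=e$ is respected. This is handled by covering the compact interval $I$ by finitely many subintervals on each of which $c$ maps into the domain of a local section of $\Lambda_{c_0}$, patching the local lifts, and finally adjusting the starting value using that $\Lambda_{c_0}^{-1}(c_0)=eH=H\ni e$ together with the principal $H$-bundle structure of $G\to\mathcal M$ (cf.\ the footnote in the first \texttt{setup}); alternatively one cites the general theorem that $C^\infty(K,-)$ for $K$ compact preserves surjective submersions.
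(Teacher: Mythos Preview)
Your approach matches the paper's: for (1) use that $\ev_0$ is a submersion and invoke the preimage theorem; for (2) factor $\rho_{c_0}=\theta_{\Lambda_{c_0}}\circ\Evol$, argue that $\Lambda_{c_0}$ is a surjective submersion, and then promote this to curve spaces (the paper cites the Stacey--Roberts Lemma for this step, which is exactly the ``general theorem that $C^\infty(K,-)$ preserves surjective submersions'' you mention as the clean alternative to patching local sections). One slip to correct: your factorisation $\Lambda_{c_0}=\Lambda^{g_0}\circ\pi$ is wrong (that would give $k\mapsto g_0kH$, not $k\mapsto kg_0H$); the correct identity, and the one the paper uses, is $\Lambda_{c_0}=\pi\circ R_{g_0}$, so $\theta_{\Lambda_{c_0}}=\theta_\pi\circ\theta_{R_{g_0}}$ with $\theta_{R_{g_0}}$ a diffeomorphism of $C^\infty(I,G)$---this also cleanly handles the basepoint issue you flag, since one then restricts the submersion $\theta_{\Lambda_{c_0}}$ to the preimage of $C^\infty_{c_0}(I,\mathcal M)$ and composes with $\Evol$.
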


\begin{proof}
 \begin{enumerate}
  \item Note that $C^{\infty}_{c_0}(I,\mathcal{M})$ is the preimage of $c_0$ under the evaluation map 
  $$\ev_0 \colon C^\infty (I,\mathcal{M}) \rightarrow \mathcal{M},\quad c \mapsto c(0).$$
One can show, similarly to the proof of \cite[Proposition 4.1]{celledoni15sao} that $\ev_0$ is a submersion.%\footnote{Alternatively this follows from \cite[Lemma 10.15]{MR583436} and \cite[Theorem A]{glockner15fos}.}
 Hence, \cite[Theorem C]{glockner15fos} implies that $C^\infty_{c_0} (I, \mathcal{M})$ is a closed submanifold of $C^\infty (I,\mathcal{M})$.
 \item Recall that $\rho_{c_0} = \theta_{\Lambda_{c_0}} \circ \Evol$ with $\theta_{\Lambda_{c_0}} \colon C^{\infty} (I,\G) \rightarrow C^\infty (I,\mathcal{M}) , f \mapsto \Lambda_{c_0} \circ f$.
  
 As $\mathcal{M}$ is a homogeneous space, $\pi \colon \G \rightarrow \mathcal{M}$ is a surjective submersion.
 Hence \cite[Chapter 5.1]{MR2021152} implies that $\theta_{\pi} \colon C^\infty (I,\G) \rightarrow C^\infty (I,\mathcal{M})$ is surjective. 
 Further, the Stacey-Roberts Lemma \cite[Lemma 2.4]{1706.04816v1} asserts that $\theta_\pi$ is a submersion.
 Picking $g \in \pi^{-1} (c_0)$, we can also write $\theta_{\Lambda_{c_0}} (f)= \pi \circ R_g \circ f = \theta_\pi (\theta_{R_g} (f))$.
 Thus $\theta_{\Lambda_{c_0}} = \theta_{\pi} \circ \theta_{R_g}$ is a surjective submersion and
\begin{displaymath}
 \theta_{\Lambda_{c_0}}^{-1} (C^\infty_{c_0} (I,\mathcal{M})) = C^\infty_* (I,\G)  = \{c \in C^\infty (I,\G) \mid c(0)=e\}.
\end{displaymath}
 By \cite[Theorem C]{hgreg2015}, $\theta_{\Lambda_{c_0}}$ restricts to a smooth surjective submersion $ C^\infty_* (I,\G) \rightarrow C^\infty_{c_0} (I,\mathcal{M})$. 
Finally, since $\Evol \colon C^\infty (I,\g) \rightarrow C^\infty_* (I,\G)$ is a diffeomorphism (cf.\ \ref{setup: Evol}), $\rho_{c_0} = \theta_{\Lambda_{c_0}} \circ \Evol$ is a smooth surjective submersion.
\qedhere
 \end{enumerate}
\end{proof}
  
\begin{lem}\label{lem: SRVT:diff}
 Fix $c_0 \in \mathcal{M}$ and let $\alpha$ be as in Definition \ref{defn:SRVT}.
 Then the square root velocity transform $\mathcal{R} = \mathrm{sc} \circ \alpha \circ D$ constructed from $\alpha$ is a smooth immersion $\mathcal{R} \colon \PP_{c_0} \rightarrow C^\infty (I,\g \setminus \{0\})$. 
\end{lem}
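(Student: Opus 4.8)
The plan is to decompose $\mathcal{R}$ into the three maps advertised in the statement, namely $\mathcal{R} = \mathrm{sc} \circ \alpha \circ D$, and to verify that each is smooth and that the composite is an immersion. For smoothness: the differentiation map $D \colon C^\infty(I,\mathcal{M}) \to C^\infty(I,T\mathcal{M})$ is smooth (it is essentially the tangent map under the identification $C^\infty(I,T\mathcal{M}) \cong TC^\infty(I,\mathcal{M})$ recalled from \cite[Theorem 42.17]{MR1471480}, and restricting to the split submanifold $\PP_{c_0}$ preserves smoothness by Lemma~\ref{lem: rhoprop}); the transport $\alpha \colon C^\infty(I,T\mathcal{M}) \to C^\infty(I,\g)$ is smooth by hypothesis in Definition~\ref{defn:SRVT}; and the scaling map $\mathrm{sc}$ from \eqref{eq: scaling} is smooth, as established following \cite[Lemma 3.9]{celledoni15sao}. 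Crucially, condition~\eqref{alpha: cond2} guarantees that $\alpha \circ D$ maps $\PP_{c_0}$ into $C^\infty(I,\g \setminus \{0\})$, which is exactly the domain on which $\mathrm{sc}$ is defined and smooth, so the composition is well-defined with values in $C^\infty(I,\g\setminus\{0\})$. Hence $\mathcal{R}$ is smooth as a composite of smooth maps.

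For the immersion property, I would argue that $\mathcal{R}$ has a smooth left inverse, which forces its tangent maps to be injective with split image. Indeed, $\mathrm{sc}$ is a diffeomorphism of $C^\infty(I,\g\setminus\{0\})$ onto itself with smooth inverse $\mathrm{sc}^{-1}$ given in \eqref{eq: scaling}; and by condition~\eqref{alpha: cond1}, $\rho_{c_0} \circ \alpha \circ D = \id_{C^\infty_{c_0}(I,\mathcal{M})}$, so in particular $\rho_{c_0} \circ (\alpha \circ D)|_{\PP_{c_0}} = \id_{\PP_{c_0}}$, where we use $\PP_{c_0} \subseteq C^\infty_{c_0}(I,\mathcal{M})$. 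Therefore
\begin{displaymath}
 \rho_{c_0} \circ \mathrm{sc}^{-1} \circ \mathcal{R} = \rho_{c_0} \circ \mathrm{sc}^{-1} \circ \mathrm{sc} \circ \alpha \circ D = \rho_{c_0} \circ \alpha \circ D = \id_{\PP_{c_0}},
\end{displaymath}
so $\psi := \rho_{c_0} \circ \mathrm{sc}^{-1}$ is a smooth left inverse of $\mathcal{R}$. A smooth map between manifolds (modeled on locally convex, in particular \Frechet, spaces) that admits a smooth left inverse is automatically an immersion: applying the chain rule at any $c \in \PP_{c_0}$ gives $T_{\mathcal{R}(c)}\psi \circ T_c \mathcal{R} = \id_{T_c \PP_{c_0}}$, whence $T_c\mathcal{R}$ is injective with a complemented (closed) image, which is precisely the defining property of an immersion of manifolds in this generality.

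The main technical point to be careful about is not the algebra of the left inverse — that is essentially a one-line computation — but rather making precise that ``smooth left inverse implies immersion'' in the infinite-dimensional Bastiani/convenient setting, i.e.\ that the injectivity of $T_c\mathcal{R}$ is accompanied by the splitting of its image so that the word ``immersion'' is justified in the sense of, e.g., \cite{glockner15fos}. Here the retraction identity $T_{\mathcal{R}(c)}\psi \circ T_c\mathcal{R} = \id$ does the work: it exhibits $T_c\mathcal{R}$ as a section of a continuous linear map, so its image is the kernel of the continuous idempotent $\id - T_c\mathcal{R}\circ T_{\mathcal{R}(c)}\psi$ and hence complemented. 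A secondary bookkeeping point is the domain issue already flagged: one must invoke \eqref{alpha: cond2} at the right moment so that $\mathrm{sc}$ (and $\mathrm{sc}^{-1}$) are applied only to curves avoiding $0$; this is why the statement restricts $\mathcal{R}$ to $\PP_{c_0}$ rather than all of $C^\infty_{c_0}(I,\mathcal{M})$.
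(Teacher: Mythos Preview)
Your smoothness argument is fine and tracks the paper's. The immersion argument, however, has a gap precisely at the point you flag as ``the main technical point''. You establish that $T_c\mathcal{R}$ is injective with complemented image (via the continuous idempotent $T_c\mathcal{R}\circ T_{\mathcal{R}(c)}\psi$) and then assert that this ``is precisely the defining property of an immersion of manifolds in this generality''. In the Fr\'echet setting of the paper, following \cite{glockner15fos}, it is not: \emph{immersion} means the existence of immersion charts, i.e.\ charts in which the map becomes the inclusion of a complemented closed subspace. Split injectivity of the tangent map is necessary, but without an inverse function theorem it does not by itself produce such charts. The paper's own proof makes exactly this point---``it is not sufficient to prove that the derivative \dots\ is injective \dots\ Instead we have to construct immersion charts''---so your tangent-level argument stops one step short.

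The repair is small and uses an ingredient you already have on the table. Your left inverse $\psi = \rho_{c_0}\circ\mathrm{sc}^{-1}$ is not merely smooth: by Lemma~\ref{lem: rhoprop}\,\ref{lem: rhoprop_b} the map $\rho_{c_0}$ is a surjective submersion, and $\mathrm{sc}^{-1}$ is a diffeomorphism, so $\psi$ is a submersion (restricting to one over $\PP_{c_0}$ by \cite[Theorem~C]{glockner15fos}). A smooth section of a submersion \emph{is} an immersion in the chart sense: in the submersion's local product charts $U_x\times N$ the section becomes a graph $y\mapsto(y,f_2(y))$, hence a diffeomorphism onto a split submanifold. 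This is exactly how the paper argues (invoking \cite[Lemmata~1.2 and~1.13]{glockner15fos}). So your left-inverse observation is the right idea, but you must use that $\rho_{c_0}$ is a \emph{submersion}, not just smooth, to upgrade the tangent-level splitting to honest immersion charts.
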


\begin{proof}
 The map $D \colon C^\infty (I,\mathcal{M}) \rightarrow C^\infty (I,T\mathcal{M}), c \mapsto \dot{c}$ is smooth by Lemma \ref{lem: smoothness}.
 Hence on $\PP_{c_0}$, the restriction of $D$ is smooth. As a composition of smooth maps, $\SRVT = \mathrm{sc} \circ \alpha \circ D|_{\PP_{c_0}}$ is also smooth.
 
 Since $\mathrm{sc} \colon C^\infty (I, \g \setminus \{0\}) \rightarrow C^\infty (I,\g \setminus \{0\})$ is a diffeomorphism, it suffices to prove that $\alpha \circ D|_{\PP_{c_0}}$ is an immersion.
 As we are dealing with infinite-dimensional manifolds, it is not sufficient to prove that the derivative of $\alpha \circ D|_{\PP_{c_0}}$ is injective (which is evident from \eqref{alpha: cond1}).
 Instead we have to construct immersion charts for $x \in \PP_{c_0}$, i.e.\ charts in which $\alpha \circ D$ is conjugate to an inclusion of vector spaces.\footnote{See \cite{glockner15fos} for more information on immersions between infinite-dimensional manifolds.}
 
 To construct these charts, recall from \eqref{alpha: cond1} that $f := \alpha \circ D|_{\PP_{c_0}}$ is a right-inverse to $\rho_{c_0}$.
 In Lemma \ref{lem: rhoprop} we established that $\rho_{c_0}$ is a surjective submersion which restricts to a submersion $\rho_{c_0}^{-1} (\PP_{c_0}) \rightarrow \PP_{c_0}$ by \cite[Theorem C]{glockner15fos}.
 Fix $x \in \PP_{c_0}$ and use the submersion charts for $\rho_{c_0}$. 
 By \cite[Lemma 1.2]{glockner15fos} there are open neighborhoods $x \in U_x \subseteq \PP_{c_0}$ and $f (x) \in U_{f(x)} \subseteq \rho_{c_0}^{-1} (\PP_{c_0})$ together with a smooth manifold $N$ and a diffeomorphism $\theta \colon U_{x} \times N \rightarrow U_{f(x)}$ such that $\rho_{c_0} \circ \theta (u,n)= u$. Thus $\theta^{-1} \circ f|_{U_x} = (\id_{U_x} , f_2)$ for a smooth map $f_2 \colon U_x \rightarrow U_{f_x}$.
 Hence $\theta^{-1} \circ f|_{U_x}$ induces a diffeomorphism onto the split submanifold $\Gamma (f_2) := \{ (y,f_2(y)) \mid y \in U_x\} \subseteq U_x \times U_{f_x}$.  
 Following \cite[Lemma 1.13]{glockner15fos}, we see that $f = \alpha \circ D|_{U_x}$ is an immersion.
 As $x$ was arbitrary, the SRVT $\SRVT$ is an immersion.
\end{proof}

Exploiting that $\SRVT$ is an immersion, we transport Riemannian structures and distances from $C^\infty (I,\g \setminus \{0\})$ to $\PP_{c_0}$ by pullback.
Note that the image of the SRVT for a homogeneous space is in general only an immersed submanifold of ${\nobreak C^\infty (I,\g \setminus \{0\})}$. 
For reductive homogeneous spaces, a certain SRVT will always yield a smooth embedding (see Lemma \ref{lem: SRVT:redemb}).
%In these cases, the we can identify the manifold of preshapes with a submanifold of $C^\infty (I,\g \setminus \{0\})$ with the induced differentiable structure.
We investigate now the Riemannian structure on $\PP_{c_0}$.

\subsection{The Riemannian geometry of the SRVT}\label{geometricproperties}

As a first step, we construct a Riemannian metric using the $L^2$ metric on $C^\infty (I,\g)$.
%As in the previous sections we let $\G$ be a Lie group with Lie algebra $\g$ and $\mathcal{M} = \G / H$ be a homogeneous space.

\begin{defn}
 Endow $C^\infty (I,\g)$ with the $L^2$ inner product 
 \begin{displaymath}
  \langle f ,g \rangle_{L^2} = \int_0^1 \langle f(t) , g(t)\rangle \di t,
 \end{displaymath}
where $\langle \cdot , \cdot \rangle$ is induced by the right $H$-invariant Riemannian metric of $\G$ on $\g$.
\end{defn}

The $L^2$ inner product induces a weak Riemannian metric. The $L^2$-geodesics are straight lines, i.e.\ a curve $c(t) \in C^\infty (I,\g)$ is a $L^2$-geodesic if and only if for every $t$, $s \mapsto c(t)(s)$ is a straight line in the vector space $\g$.
%Observe that the open subset $C^\infty (I,\g \setminus \{0\})$ inherits a Riemannian structure which is also flat.
%Using these observations, we can now define a distance function on $\PP_*$ using the SRVT. 
In Lemma \ref{lem: SRVT:diff} the square root velocity transform was identified as an immersion, which we now turn into a Riemannian immersion by pulling back the $L^2$ metric. 
Arguing as in the proof of \cite[Theorem 3.11]{celledoni15sao} one obtains the following formula for this pullback metric.

\begin{thm}\label{thm: pbmetric}
 Let $c \in \PP_{c_0}$ and consider $v,w \in T_c \PP_{c_0}$, i.e.\ $v ,w \colon I \rightarrow T\mathcal{M}$ are curves with $v(t),w(t) \in T_{c(t)}\mathcal{M}$.
 The pullback of the $L^2$ metric on $C^\infty(I, \LieA \setminus \{0\})$ under the SRVT to the manifold of immersions $\PP_{c_0}$ is given by: 
  \begin{equation}\label{Eq:ElasticMetric} \begin{aligned}
   G_c^\SRVT(v,w) = \int_I \frac{1}{4} &\left\langle D_s v, u_c\right\rangle \left\langle D_s w, u_c\right\rangle  \\ &+ \left\langle D_s v-u_c \left\langle D_s v,u_c \right\rangle , D_s w-u_c\left\langle D_s w,u_c \right\rangle \right\rangle  \dif s,
   \end{aligned}
  \end{equation}
  where $D_s v := T_c (\alpha \circ D)(v)/ \norm{ \alpha (\dot{c}) }$, $u_c := \alpha (\dot{c})/\norm{\alpha (\dot{c})}$ is the (transported) unit tangent vector of $c$, and $\dif s = \norm{\alpha (\dot{c}(t))} \dif t$.
 The pullback of the $L^2$ norm is given by
  \begin{displaymath}
       G_c^\SRVT (v,v) = \int_I \frac{1}{4} \left\langle D_s v, u_c\right\rangle^2 + \left\|D_s v-u_c\left\langle D_s v, u_c \right\rangle\right\|^2 \dif s.
       \end{displaymath}
\end{thm}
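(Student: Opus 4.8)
The plan is to reduce the computation of the pullback metric $G_c^\SRVT$ to a direct application of the chain rule together with the known formula for the pullback of the $L^2$ metric under the SRVT on Lie groups, \cite[Theorem 3.11]{celledoni15sao}. Since $\SRVT = \mathrm{sc} \circ \alpha \circ D$ and $\mathrm{sc}$ is (up to the square-root rescaling) the same map that appears in the Lie group case, the only genuinely new ingredient is that the role played by $\delta^r(c)$ in the Lie group setting is now played by $\alpha(\dot c) \in C^\infty(I,\g)$, and the role played by the derivative of $\delta^r$ is played by $T_c(\alpha\circ D)$. So the first step is to fix $c \in \PP_{c_0}$ and $v,w \in T_c\PP_{c_0}$, and compute $T_c\SRVT(v)$ explicitly: by the chain rule $T_c\SRVT(v) = T_{\alpha(\dot c)}\mathrm{sc}\bigl(T_c(\alpha\circ D)(v)\bigr)$, and $T\mathrm{sc}$ at a curve $q$ in direction $p$ is the pointwise map $t \mapsto \tfrac{1}{\sqrt{\|q(t)\|}}\bigl(p(t) - \tfrac12 u(t)\langle p(t), u(t)\rangle\bigr)$ where $u = q/\|q\|$; this is exactly the derivative computed in \cite[Lemma 3.9]{celledoni15sao} and I would simply cite it.

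Next I would substitute $q = \alpha(\dot c)$, $p = T_c(\alpha\circ D)(v)$, so that in the notation of the statement $T\mathrm{sc}(p)(t) = \tfrac{1}{\sqrt{\|\alpha(\dot c(t))\|}}\bigl(\norm{\alpha(\dot c(t))} D_s v(t) - \tfrac12 u_c(t)\,\norm{\alpha(\dot c(t))}\langle D_s v(t), u_c(t)\rangle\bigr)$, using $D_s v = T_c(\alpha\circ D)(v)/\norm{\alpha(\dot c)}$ and $u_c = \alpha(\dot c)/\norm{\alpha(\dot c)}$. Then
$$ G_c^\SRVT(v,w) = \langle T_c\SRVT(v), T_c\SRVT(w)\rangle_{L^2} = \int_0^1 \langle T\mathrm{sc}(p)(t), T\mathrm{sc}(p')(t)\rangle\,\dif t, $$
where $p' = T_c(\alpha\circ D)(w)$. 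I would expand the inner product of the two bracketed expressions, pulling out the common factor $\norm{\alpha(\dot c(t))}$ which converts $\dif t$ into $\dif s = \norm{\alpha(\dot c(t))}\,\dif t$, and then expand $\langle D_s v - \tfrac12 u_c\langle D_s v,u_c\rangle,\ D_s w - \tfrac12 u_c\langle D_s w,u_c\rangle\rangle$ using bilinearity and $\norm{u_c}=1$. Collecting terms gives $\langle D_s v, D_s w\rangle - \tfrac34\langle D_s v,u_c\rangle\langle D_s w,u_c\rangle$, which is the standard algebraic identity rearranged into the split form $\tfrac14\langle D_s v,u_c\rangle\langle D_s w,u_c\rangle + \langle D_s v - u_c\langle D_s v,u_c\rangle,\ D_s w - u_c\langle D_s w,u_c\rangle\rangle$ appearing in \eqref{Eq:ElasticMetric}; the norm formula is the special case $v=w$. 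I would remark that this is precisely the algebra carried out in \cite{celledoni15sao} and that nothing changes because it is purely formal manipulation in the Hilbert space $\g$ pointwise in $t$.

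The one point that needs a word of care — and the closest thing to an obstacle — is justifying that $T_c\SRVT(v)$ really is the pointwise expression above, i.e.\ that differentiation, the transport $\alpha$ and postcomposition all commute with evaluation at $t$ in the way the formal chain rule suggests. This is where I would invoke the identification $C^\infty(I,T\M)\cong TC^\infty(I,\M)$ from \cite[Theorem 42.17]{MR1471480} together with the smoothness of $\alpha$ and of $D$ (Lemma \ref{lem: smoothness}) established earlier, so that $T_c(\alpha\circ D)$ is itself computed pointwise; the map $\mathrm{sc}$ is an everywhere-defined smooth map on $C^\infty(I,\g\setminus\{0\})$ whose tangent map is known explicitly from \eqref{eq: scaling} and \cite[Lemma 3.9]{celledoni15sao}. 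Once these identifications are in place, the computation is entirely mechanical, and the statement follows. I would keep the written proof to a sentence or two invoking \cite[Theorem 3.11]{celledoni15sao}, since the argument there transcribes verbatim with $\delta^r c$ replaced by $\alpha(\dot c)$ and $T\delta^r$ replaced by $T(\alpha\circ D)$.
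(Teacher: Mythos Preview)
Your proposal is correct and matches the paper's approach exactly: the paper itself states only that ``arguing as in the proof of \cite[Theorem 3.11]{celledoni15sao} one obtains the following formula'', and your write-up is precisely that argument spelled out, with $\delta^r(c)$ replaced by $\alpha(\dot c)$ and $T\delta^r$ by $T(\alpha\circ D)$. Your caveat about the pointwise identification $TC^\infty(I,\M)\cong C^\infty(I,T\M)$ is appropriate and is the only thing one might add to the one-line proof the paper gives.
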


The formula for the pullback metric in Theorem \ref{thm: pbmetric} depends on $\alpha$ and its derivative.
However, notice that we always obtain a first order Sobolev metric which measures the derivative $D_s v$ of the vector field over a curve $c$.   
%In general (e.g.\ in the case of reductive homogeneous spaces, see Section \ref{sect: SRVT:reductive}), it will be quite involved to compute a formula for the derivative of $\alpha$. 
%However, choosing a suitable discretisation, this problem can be circumvented: In the applications we discuss in Section \ref{ssect: SRVT}, it will not be necessary to compute the derivative of $\alpha$.   
\smallskip

The distance on $\PP_{c_0}$ will now be defined as the geodesic distance of the first order Sobolev metric $G^{\SRVT}$, i.e.\ of the pullback of an $L^2$ metric. Thus we just need to pull the $L^2$ geodesic distance on $\SRVT (\PP_{c_0})$ %the image of the SRVT 
back using the SRVT.
But, in general, the geodesic distance of two curves on the submanifold $\SRVT (\PP_{c_0})$ with respect to the $L^2$ metric will not be the $L^2$ distance of the curves (see e.g.\ \cite[Section 2]{MR3584579}). 
The question is now, under which conditions is the geodesic distance at least locally given by the $L^2$ distance. 
%Unfortunately, the image of the SRVT and the geodesic distance are hard to describe in general. 
Note first that the image of the SRVT will in general not be an open submanifold of $C^\infty (I,\g)$ (this was the key argument to derive the geodesic distance in \cite[Theorem 3.16]{celledoni15sao}).
As a consequence we were unable to derive a general result describing the links between the geodesic distance by $G^{\SRVT}$  on $\PP_{c_0}$ and the SRVT algorithmic approach for homogeneous manifolds.
Nonetheless, we conjecture that at least locally the geodesic distance should be given by the $L^2$ distance (note that $\rho_{c_0}^{-1} (\PP_{c_0})$ is an open set, whence the geodesic distance is locally given by the $L^2$ distance). 
On the other hand, for reductive homogeneous spaces (discussed in Section \ref{sect: SRVT:reductive}), an auxiliary map can be used to obtain a geodesic distance which globally coincides with the transformed $L^2$ distance.

\subsection{Equivariance of the Riemannian metric}\label{equivariance}

Often in applications, one is interested in a metric on the shape space 
  \begin{displaymath}
   \SSpace_{c_0} := \PP_{c_0} / \Diff^+ (I) = \text{Imm}_{c_0} (I,\mathcal{M}) / \Diff^+ (I),
  \end{displaymath}
where $\Diff^+ (I)$ is the group of orientation preserving diffeomorphisms of $I$ acting on $\PP_{c_0}$ from the right (cf.\ \cite{bauer_overview_2014}).
To assure that the distance function $d_{\PP_{c_0}}$ descends to a distance function on the shape space, we need to require that it is invariant with respect to the group action.

\begin{defn}
 Let $d \colon \PP_{c_0} \times \PP_{c_0} \rightarrow [ 0 , \infty [$ be a metric. 
 Then $d$ is \emph{reparametrisation invariant} if
 \begin{equation}\label{Eq:ReparametrizationInvariance}
        d(f, h) = d(f \circ \varphi, g \circ \varphi) \quad \forall \varphi \in \Diffeom.
\end{equation}
In other words $d$ is invariant with respect to the diagonal (right) action of $\Diffeom$ on $\PP_{c_0} \times \PP_{c_0}$.
\end{defn}

Let $[f], [g] \in \SSpace$ be equivalence classes and pick arbitrary representatives $f \in [f]$ and $g \in [g]$.
If $d$ is a reparametrisation invariant, we define a metric on $\SSpace$ as
\begin{equation}\label{Eq:DistanceShape}
    d_\mathcal{S}([f], [g]) := \inf_{\varphi \in \Diffeom} d (f, g \circ \varphi).
\end{equation}
Since $d$ is reparametrisation invariant, the definition of $d_\SSpace$ makes sense (cf.\ \cite[Lemma 3.4]{celledoni15sao}).
To obtain a metric on $\SSpace$, we need reparametrisation invairance of $$ d_{\PP_{c_0}} \colon \PP_{c_0} \times \PP_{c_0} \rightarrow \R , \qquad  d_{\PP_{c_0}} (f,g) := \sqrt{\int_{0}^1 \norm{\SRVT (f)(t) - \SRVT(g)(t)}^2 \D t}.$$ 

\begin{lem}\label{lem: repinv}
 Let $\SRVT$ be the square root velocity transform with respect to $c_0 \in \mathcal{M}$ and $\alpha \colon C^\infty (I, T\mathcal{M}) \cong TC^\infty (I, \mathcal{M})  \rightarrow C^\infty (I,\g)$.
 %Recall that $TC^\infty (I, \mathcal{M}) \cong C^\infty (I, T\mathcal{M})$ and $T_cC^\infty (I, \mathcal{M}) = \{ f \in C^\infty (I, T\mathcal{M}) \mid f(t) \in T_{c(t)} \mathcal{M} \ \forall t\in I\}$
 Then $d_{\PP_{c_0}}$ is reparametrisation invariant if $\alpha$ is a $C^\infty (I,\g)$-valued $1$-form on $C^\infty (I,\mathcal{M})$, e.g.\ if $\alpha = \theta_\omega$ for a $\g$-valued $1$-form on $\M$.
 \end{lem}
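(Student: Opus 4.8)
The plan is to trace through the definition of $d_{\PP_{c_0}}$ and reduce reparametrisation invariance to a pointwise identity for the SRVT. Recall that $d_{\PP_{c_0}}(f,g)^2 = \int_0^1 \norm{\SRVT(f)(t) - \SRVT(g)(t)}^2 \di t$ and $\SRVT(c) = \mathrm{sc}(\alpha(\dot c)) = \alpha(\dot c)/\sqrt{\norm{\alpha(\dot c)}}$. Fix $\varphi \in \Diffeom$. The first step is to establish the transformation rule
\begin{equation*}
 \SRVT(c \circ \varphi)(t) = \sqrt{\dot\varphi(t)}\,\bigl(\SRVT(c)\bigr)(\varphi(t)).
\end{equation*}
This is where the $1$-form hypothesis enters: writing $D(c\circ\varphi)(t) = \dot\varphi(t)\,\dot c(\varphi(t))$ (chain rule, thinking of $\dot c$ as a curve in $T\M$), the assumption that $\alpha$ is a (fibrewise-linear) $C^\infty(I,\g)$-valued $1$-form on $C^\infty(I,\M)$ gives $\alpha\bigl(D(c\circ\varphi)\bigr)(t) = \dot\varphi(t)\,\alpha(\dot c)(\varphi(t))$, i.e.\ $\alpha$ is homogeneous of degree one in the tangent argument and commutes with the reparametrisation. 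Since $\dot\varphi(t) > 0$, pulling it through the norm and the square root in $\mathrm{sc}$ yields $\alpha(\dot c)(\varphi(t))\,\dot\varphi(t) / \sqrt{\dot\varphi(t)\,\norm{\alpha(\dot c)(\varphi(t))}} = \sqrt{\dot\varphi(t)}\,\SRVT(c)(\varphi(t))$, which is the claimed identity.

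The second step is a change of variables. Using the identity just derived,
\begin{equation*}
 d_{\PP_{c_0}}(f\circ\varphi, g\circ\varphi)^2 = \int_0^1 \dot\varphi(t)\,\norm{\SRVT(f)(\varphi(t)) - \SRVT(g)(\varphi(t))}^2 \di t,
\end{equation*}
and substituting $s = \varphi(t)$, $\di s = \dot\varphi(t)\,\di t$, with $\varphi(0)=0$, $\varphi(1)=1$ (as $\varphi$ is an orientation-preserving diffeomorphism of $I$), this becomes $\int_0^1 \norm{\SRVT(f)(s) - \SRVT(g)(s)}^2 \di s = d_{\PP_{c_0}}(f,g)^2$. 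Taking square roots finishes the argument. Finally, for the parenthetical ``e.g.'' clause, one checks that if $\omega$ is a $\g$-valued $1$-form on $\M$ and $\alpha = \theta_\omega$ is postcomposition, then $\alpha(\dot c)(t) = \omega_{c(t)}(\dot c(t))$, so the chain rule $D(c\circ\varphi)(t) = \dot\varphi(t)\dot c(\varphi(t))$ together with the linearity of $\omega_{c(\varphi(t))}$ on $T_{c(\varphi(t))}\M$ gives exactly the fibrewise-linearity and commutation with $\varphi$ required above; hence $\theta_\omega$ is a legitimate instance of such an $\alpha$.

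The main obstacle is purely bookkeeping: making precise the sense in which $\alpha$ being a ``$C^\infty(I,\g)$-valued $1$-form on $C^\infty(I,\M)$'' implies the pointwise homogeneity-and-commutation identity $\alpha(\dot\varphi\cdot(\dot c\circ\varphi))(t) = \dot\varphi(t)\,\alpha(\dot c)(\varphi(t))$. This rests on the identification $C^\infty(I,T\M) \cong TC^\infty(I,\M)$ (cited from \cite[Theorem 42.17]{MR1471480}) under which a tangent vector over $c$ is a curve $v$ with $v(t)\in T_{c(t)}\M$, on the right action of $\Diffeom$ on $C^\infty(I,\M)$ by $c \mapsto c\circ\varphi$ and its derivative $T(\cdot\circ\varphi)(v) = \dot\varphi\cdot(v\circ\varphi)$, and on the defining property of a $1$-form (naturality/pullback-compatibility together with fibrewise linearity). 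Once these identifications are in place the rest is the elementary change of variables above; no analytic subtleties arise because $\dot\varphi > 0$ everywhere on the compact interval $I$.
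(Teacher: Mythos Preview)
Your proposal is correct and takes essentially the same approach as the paper: establish the transformation rule $\SRVT(c\circ\varphi) = \sqrt{\dot\varphi}\cdot(\SRVT(c)\circ\varphi)$ from the chain rule together with fibrewise linearity of $\alpha$, then finish by the change of variables $s=\varphi(t)$ in the $L^2$ integral. One small slip in your final paragraph: the tangent map of precomposition is $T(\cdot\circ\varphi)(v) = v\circ\varphi$, with no $\dot\varphi$ factor; the factor $\dot\varphi$ enters only through the chain rule $D(c\circ\varphi)(t) = \dot\varphi(t)\,\dot c(\varphi(t))$, which you already stated correctly, so this does not affect your argument.
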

 \begin{proof}
  Consider $\varphi \in \Diffeom$ and $f,g \in \PP_{c_0}$. Then a computation yields
  \begin{displaymath}
   \SRVT (f\circ \varphi) = \frac{\alpha (\dot{f} \circ \varphi \cdot \dot{\varphi})}{\sqrt{\norm{\alpha (\dot{f} \circ \varphi \cdot \dot{\varphi})}}} = \frac{\alpha (\dot{f} \circ \varphi) \cdot \dot{\varphi}}{\sqrt{\norm{\alpha (\dot{f} \circ \varphi) \cdot \dot{\varphi}}}} = ( \SRVT (f)\circ \varphi) \cdot \sqrt{\dot{\varphi}} ,
  \end{displaymath}
  where we have used that $\alpha$ is fibre-wise linear as a $1$-form.
 Thus we can now compute 
  \begin{displaymath}
      d_{\PP_{c_0}} (f \circ \varphi,g\circ \varphi) =  \sqrt{\int_I \norm{\SRVT (f)\circ \varphi (t) - \SRVT (g) \circ \varphi (t)}^2 \dot{\varphi}(t) \di t} = d_{\PP_{c_0}} (f,g). \qedhere
    \end{displaymath}
 \end{proof}

The condition on $\alpha$ from Lemma \ref{lem: repinv} is satisfied in all examples of the SRVT considered in the present paper.
For example, for a reductive homogeneous case (see Section \ref{sect: SRVT:reductive}), we can always choose $\alpha$ as the pushforward of a $\g$-valued $1$-form. 
%Note that in Section \ref{sect: SRVT:reductive}, we define the so-called reductive SRVT for which reparametrisation invariance is established separately by Lemma \ref{lem: red:repinv}.
%This construction automatically yields a $C^\infty (I,\g)$-valued 1-form on $C^\infty (I,\mathcal{M})$.

\section{SRVT for curves in reductive homogeneous spaces}\label{sect: SRVT:reductive}

A fundamental problem in our approach to shape spaces with values in homogeneous spaces is that we need to somehow lift curves from the homogeneous space to the Lie group.
Ideally, this lifting process should be compatible with the Riemannian metrics on the spaces.
Note that for our purposes it suffices to lift the derivatives of smooth curves to curves in the Lie algebra of the Lie group.
Hence we need a suitable Lie algebra valued $1$-form, which turns out to exist for reductive homogeneous spaces, cf.\ e.g.\ \cite[Chapter X]{MR0238225} (see also \cite{munthekaas15ioh} for a recent account)

\begin{setup}\label{setup: adj} Recall that $\Ad (g) := T_e \text{conj}_{g}$, where $\text{conj}_{g} = L_{g} \circ R_{g^{-1}}$ denotes conjugation $\text{conj}_{g}\colon G\rightarrow G$.
Suppose $\m$ is a subspace of $\g$ such that %complementary to $\h$, i.e.
$\g=\h\oplus \m.$\\
	Let $\omega_e \colon T_{eH} \mathcal{M} \rightarrow \m$ be the inverse of $T_e\pi|_{\m} \colon \g \supseteq \m \rightarrow T_{eH}\mathcal{M}$. 	Identify $\g = T_eG$ and observe that $T_e \pi \colon \g \rightarrow T_{eH} \mathcal{M}$ induces an isomorphism $T_e \pi|_{\m} \colon \m \rightarrow T_{eH} \mathcal{M}$.
	
	By definition $\pi \circ R_h = \pi$ holds for all $h \in H$. Now the group actions of $G$ on itself by left and right multiplication commute and we observe that 
	\begin{equation}\label{eq: comm} 
	\text{for all } g \in G \quad \pi \circ L_g = \Lambda^g \circ \pi \quad \text{ and }\,\, T_e \pi \circ \Ad (h) = T\Lambda^h \circ T_e \pi \text{ for } h \in \h.
	\end{equation}

\end{setup}
\begin{setup}

	We will from now on assume that $\mathcal{M}$ is a reductive homogeneous manifold. This means that the subalgebra $\h$ admits a \emph{reductive complement}, i.e.\ a vector subspace $\m \subseteq \g$ such that
	\begin{displaymath}
	\g = \h \oplus \m \text{ and } \Ad (h) . \m \subseteq \m \text{ for all } h \in H.
	\end{displaymath}
	If it exists, a reductive complement will in general not be unique. However, we choose and fix a reductive complement $\m$ for $\h$. 
%	Identify $\g = T_eG$ and observe that $T_e \pi \colon \g \rightarrow T_{eH} \mathcal{M}$ induces a vector space isomorphism $T_e \pi|_{\m} \colon \m \rightarrow T_{eH} \mathcal{M}$,
\end{setup}

\begin{setup}\label{setup: oneform}
%	Let $\omega_e \colon T_{eH} \mathcal{M} \rightarrow \m$ be the inverse of $T_e\pi|_{\m} \colon \g \supseteq \m \rightarrow T_{eH}\mathcal{M}$. 
%	By definition $\pi \circ R_h = \pi$ holds for all $h \in H$. Now the group actions of $G$ on itself by left and right multiplication commute and we observe that 
%	\begin{equation}\label{eq: comm} 
%	\text{for all } g \in G \quad \pi \circ L_g = \Lambda^g \circ \pi \quad \text{ and } T_e \pi \circ \Ad (h) = T\Lambda^h \circ T_e \pi \text{ for } h \in \h.
%	\end{equation}
	As a reductive complement, $\m$ is closed with respect to the adjoint action of $H$. 
	Hence one deduces (cf.\ \cite[Lemma 4.6]{munthekaas15ioh} for a proof) that $\omega_e$ is $H$-invariant with respect to the adjoint action, i.e.\
	\begin{displaymath}
	\omega_e (T \Lambda^h (v))= \Ad (h). \omega_e (v) \quad \text{for all } v \in T_{eH}\mathcal{M} \text{ and } h \in H.
	\end{displaymath}
	Thus the following map is well-defined: 
	\begin{displaymath}
	\omega \colon T\mathcal{M} \rightarrow \g , \quad v\mapsto \Ad (g).\omega_{e} (T\Lambda^{g^{-1}} (v)) \quad \text{ for all } v \in T_{gH} \mathcal{M}. 
	\end{displaymath}
	From the definition it is clear that $\omega$ is a smooth $\g$-valued $1$-form on $\mathcal{M}$. 
	Moreover, $\omega$ is even $G$-equivariant with respect to the canonical and adjoint action: 
	\begin{equation}\label{eq: equivariance}
	\omega (T\Lambda^k (v)) = \Ad (k). \omega (v) \quad \text{for all } v\in T \mathcal{M} \text{ and } k \in \G.
	\end{equation}
	Note that $\omega$ depends by construction on our choice of reductive complement $\m$. 
	However, we will suppress this dependence in the notation. As noted in \cite[Section 4.2]{munthekaas15ioh}, the $1$-forms $\omega$ correspond bijectively to reductive structures on $\G/H$.\footnote{Note that there might be different reductive structures on a homogeneous manifold. We refer to \cite[Section 5.1]{munthekaas15ioh} for examples and further references.}
\end{setup}

\begin{setup}\label{setup: alphared}
	Let $\omega$ be the $1$-form constructed in \ref{setup: oneform}. 
	Then we define the map 
	$$\Pomega \colon C^\infty (I,T\mathcal{M}) \rightarrow C^\infty (I,\g),\quad f \mapsto \omega \circ f .$$
	Note first that $\Pomega$ is smooth by \cite[Theorem 42.13]{MR1471480}.
	We will prove that $\Pomega$ indeed satisfies \eqref{alpha: cond1} and \eqref{alpha: cond2}, whence $\alpha = \Pomega$ yields an SRVT as in \ref{defn:SRVT}. 
\end{setup}

To motivate the computations, let us investigate an important special case.
% Since $H$ is compact, we can choose a $\Ad (H)$-invariant inner product on $\g$ which we continue to a left invariant Riemannian metric $\langle \cdot, \cdot \rangle$ on $\G$.
% Note that the resulting Riemannian metric is in fact $H$-biinvariant. 
% Hence it descends to a Riemannian metric on $\mathcal{M}$:
%   \begin{equation}\label{eq: Riemsub}
%    \langle v,w \rangle_\pi := \langle T\pi^{-1} (v),T\pi^{-1} (w)\rangle \quad \text{for all } v,w \in T_{gH}\mathcal{M} \text{ and } g \in \G. 
%   \end{equation}
% Here $\pi \colon \G \rightarrow \mathcal{M}$ is the canonical quotient map. 
% In the following we will always endow $\mathcal{M}$ with this Riemannian metric and note that $\pi$ becomes a Riemannian submersion.
% 
% Further, let $\h = \Lf (H)$ be the Lie algebra of $H$ and set $\m := \h^\perp$ (the orthogonal complement with respect to $\langle \cdot, \cdot \rangle_e$).
% \end{setup}

\begin{ex}\label{ex: LIEGP}
Similarly to example~\ref{Liegroupex}, let $\G$ be a Banach Lie group and $H = \{e\}$ the trivial subgroup. Then $\G = \G/\{e\}$ can be viewed as a reductive homogeneous manifold with $\m = \g$, $\pi = \id_\G$ and $\omega_e = \id_{\g}$.
	%Inserting the bits and pieces into  
	From the definition of $\omega$ we obtain $\omega (v) = \Ad(g). (L^{g^{-1}})_* (v) = (R_g^{-1})_* (v) = \kappa^r (v),$
%	\begin{displaymath}
%	\omega (v) = \Ad(g). (L^{g^{-1}})_* (v) = (R_g^{-1})_* (v) = \kappa^r (v),
%	\end{displaymath}
	where $\kappa^r$ denotes the right Maurer-Cartan form, \cite[Section 38]{MR1471480} or \cite[Section 5.1]{munthekaas15ioh}.
	In particular, for $c \colon I \rightarrow \G$ we have $\SRp (c)= \kappa^r (\dot{c}) = \delta^r (c)$ (right logarithmic derivative).
	As we have $\Evol \circ \,\delta^r (c) = c$ for a curve starting at $e$.
	
	The SRVT for reductive spaces coincides thus with the SRVT for Lie group valued shape spaces as outlined in \ref{setup:SRVT}.
\end{ex}

Albeit Example \ref{ex: LIEGP} is quite trivial as a homogeneous space, it highlights a general principle of the construction for reductive homogeneous spaces.

\begin{rem}
We here provide an alternative interpretation for $\Pomega \circ D$: 
A smooth curve $c \colon I \rightarrow \mathcal{M}$ admits a smooth horizontal lift $\tilde{c} \colon I \rightarrow G$ depending on a choice of connection for the principal bundle $G \rightarrow \mathcal{M}$ \cite[Chapter 5.1]{MR2021152}. 
For a reductive homogeneous manifold we construct a horizontal lift $\tilde{c}$ using the canonical invariant connection (depending on the reductive complement, see \cite[X.2]{MR0238225}). 
Now we take the (right) Darboux derivative (aka right logarithmic derivative) of $\tilde{c} \colon I \rightarrow G$ (see \cite[3.\S 5]{Sharpe}). Then unraveling the definitions similar to Examples \ref{Liegroupex} and \ref{ex: LIEGP}, one can show that $\delta^r (\tilde{c}) = \Pomega \circ D (c)$ holds for the $1$-form $\Pomega$ as in \ref{setup: alphared}.
Thus for a reductive homogeneous space the proposed SRVT can be viewed (up to scaling) as the Darboux derivative of a horizontal lift of a curve in $\mathcal{M}$. Note that this interpretation justifies again to view $\rho_{c_0}$ as a generalised version of the evolution operator $\Evol$ (which inverts the right logarithmic derivative, see Remark \ref{rem: gen:EVOL}).
\end{rem}

A rich source for reductive homogeneous spaces are quotients of semisimple Lie groups. We recall now some of the main examples. 

\begin{ex}\label{ex: semisimple}
	Let $G$ be a semisimple Lie group and $H$ a Lie subgroup of $G$ which is also semisimple. Then the homogeneous space $\mathcal{M} = G/H$ is reductive. 
	A reductive complement of $\mathfrak{h}$ in $\mathfrak{g}$ is the orthogonal complement $\mathfrak{h}^\perp$ with respect to the Cartan-Killing form on $\mathfrak{g}$ (recall that the Killing form of a semisimple Lie algebra is non-degenerate by Cartan's criterion \cite[I.\S 7 Theorem 1.45]{MR1920389}).
	For example, this occurs for $G = \SL (n)$ and $H  = \SL(n-p)$ or $G = \SO (n)$ and $H= \SO(n-p)$ (where $1 \leq p < n$), since by \cite[I.\S 8 and I.\S 18]{MR1920389} the following properties hold:
	\begin{center}
		\begin{tabular}{|c | c | c | c | }
		   \hline {\bf Lie group $\G$ } & {\bf compact? } & {\bf semisimple?} &  {\bf Killing form $B(X,Y)$ on $\g$} \\
			\hline
			$\SO (n)$ & yes & yes (for $n\geq 3$) & $(n-2)\text{Tr} (XY)$\\ \hline
			$\SL (n)$ & no  & yes 			& $2n \text{Tr}(XY)$\\ \hline
			$\GL (n)$ & no  & no & $2n \text{Tr}(XY) - 2 \text{Tr}(X)\text{Tr}(Y)$\\ \hline
		\end{tabular}
	\end{center}
	Here $\text{Tr}$ denotes the trace of a matrix. All main examples in this paper are reductive.
\end{ex}
\begin{prop}\label{prop: alphaomega:prop}
	Let $\mathcal{M} =\G/H$ be a reductive homogeneous space, $c_0 \in \mathcal{M}$, $\omega$ and $\Pomega$ as in \ref{setup: alphared}. 
	Consider $D \colon C^\infty_{c_0} (I ,\mathcal{M} ) \rightarrow C^\infty (I,T\mathcal{M}),  c \mapsto \dot{c}$.
	Then  
	\begin{displaymath}
	\rho_{c_0} \circ \Pomega \circ D = \id_{C^\infty_{c_0} (I,\mathcal{M})}.
	\end{displaymath}
\end{prop}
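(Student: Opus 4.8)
The plan is to verify the identity pointwise in $t$ by unwinding the definitions of $\rho_{c_0}$, $\Pomega$, and the evolution operator, reducing everything to an ODE uniqueness argument as in Remark~\ref{rem: gen:EVOL}. Fix $c \in C^\infty_{c_0}(I,\mathcal{M})$ and set $q := \Pomega(D(c)) = \omega \circ \dot{c} \in C^\infty(I,\g)$. By Remark~\ref{rem: gen:EVOL}, the curve $\tilde{c} := \rho_{c_0}(q)$ is the unique solution of $\frac{\dif}{\dif t}\tilde{c}(t) = T_e\Lambda_{\tilde{c}(t)}(q(t))$ with $\tilde{c}(0) = c_0$. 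Since $c(0) = c_0$ as well, it suffices to show that $c$ itself solves the same ODE, i.e.\ that $\dot{c}(t) = T_e\Lambda_{c(t)}(\omega(\dot{c}(t)))$ for every $t$. This is then a purely pointwise statement about the $1$-form $\omega$: for a tangent vector $v \in T_{gH}\mathcal{M}$ we must check that $T_e\Lambda_{gH}(\omega(v)) = v$.

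The key step is therefore to prove the pointwise identity $T_e\Lambda_p \circ \omega|_{T_p\mathcal{M}} = \id_{T_p\mathcal{M}}$ for all $p \in \mathcal{M}$. I would first treat the base point $p = eH$: there $\omega_{eH}$ is by construction (\ref{setup: adj}) the inverse of $T_e\pi|_{\m} \colon \m \to T_{eH}\mathcal{M}$, and $\Lambda_{eH} = \pi$ (since $\Lambda(g,eH) = (ge)H = \pi(g)$), so $T_e\Lambda_{eH} = T_e\pi$, and the claim is immediate by definition of $\omega_{eH}$ as that inverse, using $\g = \h \oplus \m$ so that $T_e\pi$ restricted to $\m$ is an isomorphism. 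For a general $p = gH$, I would use the equivariance relations: from $\pi \circ L_g = \Lambda^g \circ \pi$ in \eqref{eq: comm} one differentiates to get $T_e\Lambda_{gH} = T\Lambda^g \circ T_e\pi \circ \Ad(g^{-1})$ (using that $\Lambda_{gH} = \Lambda^g \circ \pi \circ L_{g^{-1}}$ on appropriate domains, or more directly $\Lambda_{gH}(k) = (kg)H = \Lambda^g(\pi(L_{g^{-1}}(kg)))$ -- I would spell out the cleanest chain here). Combining this with the definition $\omega(v) = \Ad(g).\omega_{eH}(T\Lambda^{g^{-1}}(v))$ for $v \in T_{gH}\mathcal{M}$, the factors $\Ad(g)$ and $\Ad(g^{-1})$ cancel, as do $T\Lambda^g$ and $T\Lambda^{g^{-1}}$, leaving $T_e\pi \circ \omega_{eH} \circ T\Lambda^{g^{-1}}(v)$ applied after $T\Lambda^g$, which collapses to $v$ by the base-point case.

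Having established the pointwise identity, the argument concludes by invoking ODE uniqueness: $c$ and $\rho_{c_0}(\Pomega(D(c)))$ both satisfy $\dot{\gamma}(t) = T_e\Lambda_{\gamma(t)}(q(t))$ with the same initial value $c_0$, hence coincide on $I$; this is exactly $\rho_{c_0} \circ \Pomega \circ D = \id_{C^\infty_{c_0}(I,\mathcal{M})}$. The main obstacle I anticipate is purely bookkeeping: getting the tangent maps $T_e\Lambda_{gH}$, $T\Lambda^g$, $T_e\pi$, $\Ad(g)$ composed in the correct order and on the correct domains, since $\Lambda_{gH}$, $\Lambda^g$, and $\pi$ have differing source and target spaces and one must be careful that the intertwining relations in \eqref{eq: comm} are differentiated at the right points. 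There is no analytic difficulty -- smoothness of all maps involved is already recorded in \ref{setup: oneform} and \ref{setup: alphared}, and well-posedness of the defining ODE of $\Evol$ (hence of $\rho_{c_0}$) is guaranteed by \ref{setup: Evol} -- so the entire content is this chain of canonical identifications, essentially the reductive-homogeneous-space analogue of the fact that the right Maurer--Cartan form inverts the right logarithmic derivative.
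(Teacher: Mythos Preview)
Your approach is correct and genuinely different from the paper's. The paper first establishes the identity at the base point $eH$ by lifting to $G$: it picks a lift $g$ of $c$ with $\pi\circ g=c$, sets $\gamma=\Evol(\Pomega(\dot c))$, and shows $\gamma(t)^{-1}g(t)\in H$ for all $t$ by computing $\frac{\dif}{\dif t}\pi(\gamma^{-1}g)$ via the product rule for logarithmic derivatives (this is Lemma~\ref{lem: identity}). It then treats a general $c_0=g_0H$ separately, using the $G$-equivariance of $\omega$ and the identity $\Evol\circ\Ad(g_0)=\mathrm{conj}_{g_0}\circ\Evol$ to transport the result from $eH$ to $c_0$. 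By contrast, you stay on $\mathcal{M}$ throughout: you prove the pointwise section identity $T_e\Lambda_{p}\circ\omega|_{T_p\mathcal{M}}=\id_{T_p\mathcal{M}}$ (which, unwinding the definitions, amounts to $T_eR_g\circ\Ad(g)=T_eL_g$ combined with $T_e\pi\circ\omega_e=\id$), so that $c$ automatically solves the same first-order ODE on $\mathcal{M}$ that characterises $\rho_{c_0}(q)$ in Remark~\ref{rem: gen:EVOL}, and conclude by uniqueness.

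Your route is shorter and more conceptual: the pointwise identity handles all base points at once, so no separate reduction from $c_0$ to $eH$ is needed, and no lift to $G$ or logarithmic-derivative calculus enters. The paper's route, on the other hand, makes the link to the Lie group case (where $\Pomega\circ D=\delta^r$ and $\rho_e=\Evol$) more visible and reuses the $\Evol$ machinery already in place; it also avoids invoking ODE uniqueness on $\mathcal{M}$ directly, which is harmless here since $\mathcal{M}$ is finite-dimensional but would require more care in an infinite-dimensional setting. One minor point to tighten in your write-up: Remark~\ref{rem: gen:EVOL} only records that $\rho_{c_0}(q)$ \emph{satisfies} the ODE, not uniqueness, so you should state explicitly that the time-dependent vector field $(t,p)\mapsto T_e\Lambda_p(q(t))$ is smooth on $I\times\mathcal{M}$ and hence has unique integral curves.
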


\begin{proof}
	As a shorthand write $\SRp := \Pomega \circ D$.
	We establish in Lemma \ref{lem: identity} the identity 
	\begin{equation}\label{claim}
	\id_{C^\infty_{eH} (I,\mathcal{M})}= \rho_{eH} \circ \SRp = \Lambda_{eH} \circ \Evol \circ \SRp = \pi \circ \Evol \circ \SRp.
	\end{equation} 
	Let now $c \in C^\infty_{c_0} (I,\mathcal{M})$ with $c_0 = g_0 H$. 
	Then we obtain $\Lambda^{g_0^{-1}} \circ c \in C^\infty_{eH} (I,\mathcal{M})$ and   
	\begin{align*}
	\rho_{c_0} \circ \SRp (c) &\stackrel{\hphantom{\eqref{eq: equivariance}}}{=}  (\Lambda_{c_0} \circ \Evol) \circ \Pomega (\dot{c}) = \Lambda_{c_0} \circ \Evol \circ \,\omega (T\Lambda^{g_0} T\Lambda^{g_0^{-1}}\dot{c})\\
	&\stackrel{\eqref{eq: equivariance}}{=} \Lambda_{c_0} \circ \Evol ( \Ad (g_0).\omega (T\Lambda^{g_0^{-1}}\dot{c})) = \Lambda_{c_0} \circ \Evol ( \Ad (g_0).\SRp (\Lambda^{g_0^{-1}} \circ c)).
	\end{align*}
	Recall from \cite[1.16]{hgreg2015} that for a Lie group morphism $\varphi$ one has the identity $\Evol \circ \Lf(\varphi) = \varphi \circ \Evol$.
	By definition, $\Ad (g) = \Lf (\text{conj}_{g}) := T_e \text{conj}_{g}$, where $\text{conj}_{g} = L_{g} \circ R_{g^{-1}}$ denotes the conjugation morphism.
	Insert this into the above equation: 
	\begin{align*}
	\rho_{c_0} \circ \SRp (c) &=  \Lambda_{c_0} \circ \Evol \circ \SRp (c) = \Lambda_{c_0} \circ L_{g_0} \circ R_{g_0^{-1}} \circ \Evol (\SRp (\Lambda^{g_0^{-1}} \circ c))\\
	&= \pi \circ L_{g_0}\Evol (\SRp (\Lambda^{g_0^{-1}} \circ c)) = \Lambda^{g_0} \circ \pi \circ \Evol (\SRp (\Lambda^{g_0^{-1}} \circ c))\\
	&\stackrel{\eqref{claim}}{=}\Lambda^{g_0} \circ \Lambda^{g_0^{-1}} \circ c =c.
	\end{align*}
	In passing to the second line we used that left and right multiplication maps commute and that $\Lambda_{c_0} (R_{g_0^{-1}} (k)) = \Lambda_{c_0} (kg_0^{-1}) = kg_0^{-1}c_0 = kg_0^{-1}g_0H = \pi (k)$.
\end{proof}

\begin{prop}\label{prop: defn:SRVT:hom}
	Let $\mathcal{M} =\G/H$ be a reductive homogeneous space, $c_0 \in \mathcal{M}$, $\omega$ and $\Pomega$ as in \ref{setup: alphared}. 
	Then $\Pomega$ satisfies \eqref{alpha: cond1} and \eqref{alpha: cond2}, whence for a reductive homogeneous space we can define the SRVT as 
	\begin{displaymath}
	\SRVT (c) := \frac{\Pomega (\dot{c})}{\sqrt{\norm{\Pomega (\dot{c})}}} \quad \text{for } c \in \text{\upshape Imm} (I,\mathcal{M})
	\end{displaymath}
\end{prop}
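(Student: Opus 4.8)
The plan is to verify the two conditions \eqref{alpha: cond1} and \eqref{alpha: cond2} for $\alpha = \Pomega$, after which the SRVT is well-defined by Definition~\ref{defn:SRVT} and the stated formula is just the general formula $\SRVT(c) = \alpha(\dot c)/\sqrt{\|\alpha(\dot c)\|}$ specialised to $\alpha = \Pomega$. Condition \eqref{alpha: cond1}, namely $\rho_{c_0} \circ \Pomega \circ D = \id_{C^\infty_{c_0}(I,\mathcal{M})}$, is precisely the content of Proposition~\ref{prop: alphaomega:prop}, which we may invoke directly. So the only real work is condition \eqref{alpha: cond2}: $\Pomega \circ D (\PP_{c_0}) \subseteq C^\infty(I,\g\setminus\{0\})$, i.e.\ for an immersion $c$ the curve $t \mapsto \omega(\dot c(t))$ never vanishes.

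For this I would argue pointwise in $t$. Fix $t$ and write $c(t) = gH$. By construction $\omega_{gH} = \Ad(g) \circ \omega_e \circ T\Lambda^{g^{-1}}$, where $\omega_e \colon T_{eH}\mathcal{M} \to \m$ is the \emph{inverse} of the isomorphism $T_e\pi|_{\m}\colon \m \to T_{eH}\mathcal{M}$. Each of the three maps composing $\omega_{gH}$ is a linear isomorphism: $T\Lambda^{g^{-1}}\colon T_{gH}\mathcal{M} \to T_{eH}\mathcal{M}$ is invertible because $\Lambda^{g^{-1}}$ is a diffeomorphism of $\mathcal{M}$; $\omega_e$ is an isomorphism onto $\m$ by \ref{setup: adj}; and $\Ad(g)\colon \g \to \g$ is an isomorphism. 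Hence $\omega_{gH}\colon T_{gH}\mathcal{M} \to \g$ is injective (indeed a linear isomorphism onto the subspace $\Ad(g).\m$). Since $c$ is an immersion, $\dot c(t) \neq 0$ in $T_{c(t)}\mathcal{M}$, and injectivity of $\omega_{c(t)}$ gives $\omega(\dot c(t)) = \Pomega(\dot c)(t) \neq 0$. As $t$ was arbitrary, $\Pomega \circ D(c) \in C^\infty(I,\g\setminus\{0\})$, which is \eqref{alpha: cond2}.

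With both conditions established, Definition~\ref{defn:SRVT} applies verbatim with $\alpha = \Pomega$: the scaling map $\mathrm{sc}$ from \eqref{eq: scaling} is well-defined on $C^\infty(I,\g\setminus\{0\})$ by \eqref{alpha: cond2}, so $\SRVT = \mathrm{sc}\circ \Pomega \circ D$ makes sense and equals $c \mapsto \Pomega(\dot c)/\sqrt{\|\Pomega(\dot c)\|}$; smoothness as an immersion is then immediate from Lemma~\ref{lem: SRVT:diff} (using that $\Pomega$ is smooth by \ref{setup: alphared}). I do not anticipate a genuine obstacle here — the proposition is essentially a packaging of Proposition~\ref{prop: alphaomega:prop} together with the elementary observation that $\omega$ is fibrewise injective. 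The one point requiring a little care is making sure the fibrewise injectivity argument is phrased correctly: $\omega$ is \emph{not} fibrewise surjective onto $\g$ (its image is $\Ad(g).\m \subsetneq \g$ in general), but injectivity is all that \eqref{alpha: cond2} needs, and that follows because $\omega_e$ is by definition a linear isomorphism onto the reductive complement $\m$.

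\begin{proof}
	Condition \eqref{alpha: cond1} for $\alpha = \Pomega$ is exactly the statement of Proposition~\ref{prop: alphaomega:prop}.

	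It remains to verify \eqref{alpha: cond2}, i.e.\ that $\Pomega \circ D(\PP_{c_0}) \subseteq C^\infty(I,\g\setminus\{0\})$. Let $c \in \PP_{c_0} = \mathrm{Imm}(I,\mathcal{M}) \cap C^\infty_{c_0}(I,\mathcal{M})$ and fix $t \in I$; write $c(t) = gH$. By the definition of $\omega$ in \ref{setup: oneform},
	\begin{displaymath}
	\omega_{gH} = \Ad(g) \circ \omega_e \circ T\Lambda^{g^{-1}} \colon T_{gH}\mathcal{M} \longrightarrow \g .
	\end{displaymath}
	Here $T\Lambda^{g^{-1}}\colon T_{gH}\mathcal{M} \to T_{eH}\mathcal{M}$ is a linear isomorphism since $\Lambda^{g^{-1}}$ is a diffeomorphism of $\mathcal{M}$; the map $\omega_e \colon T_{eH}\mathcal{M}\to \m$ is a linear isomorphism by \ref{setup: adj} (it is the inverse of $T_e\pi|_{\m}$); and $\Ad(g)\colon \g\to\g$ is a linear isomorphism. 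Hence $\omega_{gH}$ is injective. Since $c$ is an immersion, $\dot c(t) \neq 0$, and therefore
	\begin{displaymath}
	\Pomega(\dot c)(t) = \omega(\dot c(t)) = \omega_{c(t)}(\dot c(t)) \neq 0 .
	\end{displaymath}
	As $t \in I$ was arbitrary, $\Pomega \circ D(c) \in C^\infty(I,\g\setminus\{0\})$, establishing \eqref{alpha: cond2}.

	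Thus $\Pomega$ satisfies \eqref{alpha: cond1} and \eqref{alpha: cond2}, so by Definition~\ref{defn:SRVT} (with $\alpha = \Pomega$, which is smooth by \ref{setup: alphared}) the square root velocity transform on the reductive homogeneous space $\mathcal{M}$ at $c_0$ is well-defined and given by
	\begin{displaymath}
	\SRVT(c) = \frac{\Pomega(\dot c)}{\sqrt{\norm{\Pomega(\dot c)}}} \quad \text{for } c \in \mathrm{Imm}(I,\mathcal{M}),
	\end{displaymath}
	and it is a smooth immersion by Lemma~\ref{lem: SRVT:diff}.
\end{proof}
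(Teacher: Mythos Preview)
Your proof is correct and follows essentially the same approach as the paper: invoke Proposition~\ref{prop: alphaomega:prop} for \eqref{alpha: cond1}, and for \eqref{alpha: cond2} argue pointwise that $\omega_{gH} = \Ad(g)\circ\omega_e\circ T\Lambda^{g^{-1}}$ is a composition of linear isomorphisms (onto $\m$, then into $\g$), hence injective, so $\omega(\dot c(t))\neq 0$ for an immersion. The paper's argument is the same, phrased slightly more tersely.
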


\begin{proof}
	In Proposition \ref{prop: alphaomega:prop} we have already established \eqref{alpha: cond1}.
	To see that \eqref{alpha: cond2} also holds for $\Pomega$, observe first that for $v \in T_{gH} \mathcal{M}$, we have 
	$\omega (v) = \Ad (g). \omega_e (T\Lambda^{g^{-1}} (v))$.
	Since $\omega_e \circ T\Lambda^{g^{-1}} \colon T_{gH} \mathcal{M} \rightarrow \m$ and $\Ad (g) \colon \g \rightarrow \g$ are linear isomorphisms, we see that $\omega (v) = 0$ if and only if $v=0_{gH}$.
	As $\Pomega$ is post-composition by $\omega$, $\Pomega$ satisfies \eqref{alpha: cond2}. 
\end{proof} 

\subsection{Riemannian geometry and the reductive SRVT}

In the reductive space case, it is easier to describe the image of the square root velocity transform. 
It turns out that the image is a split submanifold with a global chart. Using this chart, we can also obtain information on the geodesic distance.

The idea is to transform the image of the SRVT such that it becomes $C^\infty (I, \m \setminus \{0\})$, where $\m$ is again the reductive complement. 
Pick $g_0 \in \pi^{-1}(c_0)$ and use the adjoint action of $G$ and the evolution $\Evol \colon C^\infty (I,\g) \rightarrow C^\infty (I,G)$ to define 
\begin{displaymath}
\Psi_{g_0} (q) := -\Ad (g_0\Evol (q)^{-1}).q \quad \text{ for } q \in C^\infty (I,\g)
\end{displaymath}
where the dot denotes pointwise application of the linear map $\Ad (\Evol(q)^{-1})$.
Then $\Psi_{g_0}$ is a diffeomorphism with inverse $\Psi_{g_0^{-1}}$ (see Lemma \ref{lem: PSi0diff}). 
We will now see that $\Psi_{g_0^{-1}}$ maps the image of the SRVT to $C^\infty (I, \m \setminus \{0\})$.

\begin{lem}\label{lem: SRVT:redemb}
	Choose $c_0 \in \mathcal{M}$ in the reductive homogeneous space $\M$, and let $\omega$ and $\Pomega$, $D$ be as in Proposition \ref{prop: alphaomega:prop}.
	Then $\textup{Im } \Pomega \circ D$ is a split submanifold of $C^\infty (I,\g \setminus \{0\})$ modelled on $C^\infty (I,\m)$ and $\Pomega \circ D$ is a smooth embedding.
	In particular, $\SRVT (\PP_{c_0}) = \Psi_{g_0} (C^\infty (I, \m \setminus \{0\}))$ is a split submanifold of $C^\infty (I,\g \setminus \{0\})$ and $\SRVT$ is a smooth embedding.
\end{lem}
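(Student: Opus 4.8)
The strategy is to reduce the claim about $\Pomega \circ D$ to statements already available in the excerpt, and then to conjugate everything by the diffeomorphism $\Psi_{g_0}$ to identify the image explicitly. First I would recall from Proposition \ref{prop: defn:SRVT:hom} that $\Pomega$ satisfies \eqref{alpha: cond1}, so $\Pomega \circ D$ is a right inverse of the surjective submersion $\rho_{c_0}$ from Lemma \ref{lem: rhoprop}. Exactly as in the proof of Lemma \ref{lem: SRVT:diff}, using submersion charts for $\rho_{c_0}$ one sees that $\Pomega \circ D$ is a smooth immersion whose image is a split submanifold, provided the image is genuinely embedded and not merely immersed. The key point specific to the reductive case is therefore the identification of $\mathrm{Im}(\Pomega \circ D)$ via $\Psi_{g_0^{-1}}$, which simultaneously produces a global chart and upgrades "immersed" to "embedded".

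Concretely, I would show $\Psi_{g_0^{-1}}\big(\mathrm{Im}(\Pomega\circ D)\big) = C^\infty(I,\m\setminus\{0\})$ by a direct computation. Write $q = \Pomega(\dot c) = \SRp(c)$ for $c \in C^\infty_{c_0}(I,\M)$ and $g(t) = \Evol(q)(t)$; recall from the proof of Proposition \ref{prop: alphaomega:prop} that $\pi\circ \Evol\circ \SRp$ behaves compatibly with the $G$-action, so that $g_0 g(t)$ is a lift of $c$. Applying $\Psi_{g_0^{-1}}(q) = -\Ad\big(g_0^{-1}\Evol(q)\big)^{-1}.q$ — I would be careful here with the precise form of $\Psi_{g_0^{-1}}$ from Lemma \ref{lem: PSi0diff} — and using that $\omega(v) = \Ad(g).\omega_e(T\Lambda^{g^{-1}}v)$ for $v \in T_{gH}\M$ together with the equivariance \eqref{eq: equivariance}, the adjoint factors telescope and one is left with $\omega_e$ applied to a transported tangent vector, which lands in $\m$ by definition of $\omega_e$. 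Surjectivity onto $C^\infty(I,\m\setminus\{0\})$ follows because every $q \in C^\infty(I,\m\setminus\{0\})$ arises this way: set $c := \rho_{c_0}(\Psi_{g_0}(q))$ and check $\SRp(c) = \Psi_{g_0}(q)$ using \eqref{alpha: cond1}. The nonvanishing condition is preserved throughout since $\Ad$ and $\omega_e$ are linear isomorphisms, matching \eqref{alpha: cond2}.

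Once this identification is in place, the rest is formal: $\Psi_{g_0}$ is a diffeomorphism of $C^\infty(I,\g\setminus\{0\})$ by Lemma \ref{lem: PSi0diff}, and $C^\infty(I,\m\setminus\{0\})$ is an open subset of the closed split subspace $C^\infty(I,\m) \subseteq C^\infty(I,\g)$ (using $\g = \h\oplus\m$ and that this decomposition passes to the mapping spaces), hence a split submanifold of $C^\infty(I,\g\setminus\{0\})$. Its image $\Psi_{g_0}(C^\infty(I,\m\setminus\{0\})) = \mathrm{Im}(\Pomega\circ D)$ is therefore a split submanifold as well, modelled on $C^\infty(I,\m)$. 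Finally, $\Pomega\circ D \colon \PP_{c_0} \to \mathrm{Im}(\Pomega\circ D)$ is a continuous bijection with continuous inverse: the inverse is the restriction of $\rho_{c_0}$ composed with $D$ on the image (by \eqref{alpha: cond1}), so $\Pomega\circ D$ is a homeomorphism onto its image, and being also an immersion into the split submanifold it is a smooth embedding. The same holds for $\SRVT = \mathrm{sc}\circ \Pomega\circ D$ since $\mathrm{sc}$ is a diffeomorphism.

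The main obstacle I anticipate is the telescoping computation for $\Psi_{g_0^{-1}}\circ \SRp$: one must track the base points of the tangent vectors carefully (the lift of $c$ is $g_0 g(t)$ with $g = \Evol(\SRp(c))$, not $g(t)$ alone), get the conventions for $\Psi_{g_0^{-1}}$ versus $\Psi_{g_0}$ right, and apply the equivariance of $\omega$ at the correct group element so that all the $\Ad$-factors genuinely cancel and leave only an $\m$-valued curve. Everything else is an application of the Stacey–Roberts type submersion machinery already invoked in Lemmata \ref{lem: rhoprop} and \ref{lem: SRVT:diff}.
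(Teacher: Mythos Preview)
Your plan is correct and follows essentially the same route as the paper: identify $\mathrm{Im}(\Pomega\circ D)$ with $\Psi_{g_0}(C^\infty(I,\m\setminus\{0\}))$, use that $\Psi_{g_0}$ is a diffeomorphism and that $C^\infty(I,\m\setminus\{0\})$ is a split submanifold, and then conclude that $\Pomega\circ D$ is a diffeomorphism onto its image because $\rho_{c_0}$ restricted to the image is a smooth two-sided inverse. The paper packages the ``telescoping computation'' you anticipate as a separate Lemma~\ref{lem: straightening} in the appendix, proving both directions at once.

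One small correction: the surjectivity step --- setting $c := \rho_{c_0}(\Psi_{g_0}(q))$ and checking $\SRp(c) = \Psi_{g_0}(q)$ --- does \emph{not} follow from \eqref{alpha: cond1}. That identity says $\rho_{c_0}\circ\SRp = \id$, which gives injectivity of $\SRp$, not the reverse composition. You genuinely need a second computation (this is the formula $\theta_\omega \circ D(\rho_{c_0}\circ \Psi_{g_0}(q)) = \Psi_{g_0}(q)$ in Lemma~\ref{lem: straightening}), and it is exactly the kind of $\Ad$-cancellation you flag in your obstacle paragraph, so you already have the right idea --- just don't cite \eqref{alpha: cond1} for it. Once both compositions with $\rho_{c_0}$ are identities, you get ``diffeomorphism onto image'' directly, which is slightly cleaner than routing through ``immersion plus homeomorphism onto image''; the paper takes this shorter path and skips the submersion-chart argument from Lemma~\ref{lem: SRVT:diff} entirely.
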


\begin{proof}
	As $\g = \h \oplus \m$, we have $C^\infty (I,\g) = C^\infty (I,\h\oplus \m) \cong C^\infty (I,\h) \oplus C^\infty (I,\m)$. 
	Thus $C^\infty (I,\m \setminus \{0\})$ is a closed and split submanifold of $C^\infty (I,\g \setminus \{0\})$.
	Fix $g_0 \in G$ with $\pi(g_0)=c_0$ and note that $\Psi_{g_0}$ restricts to a diffeomorphism $C^\infty (I,\g \setminus \{0\}) \rightarrow C^\infty (I,\g \setminus \{0\})$ by Lemma \ref{lem: PSi0diff}.
	Now as $\Psi_{g_0} (C^\infty (I,\m\setminus \{0\}) ) = \text{Im } \Pomega \circ D$ (cf.\ Lemma \ref{lem: straightening}), the image $\textup{Im } \Pomega \circ D$ is a closed and split submanifold of $C^\infty (I,\g \setminus \{0\})$.
	Further, we deduce from Lemma \ref{lem: straightening} that $\rho_{c_0}|_{\text{Im} \Pomega \circ D}$ is smooth with $\Pomega \circ D \circ \rho_{c_0}|_{\text{Im} \Pomega \circ D} = \id_{\text{Im} \Pomega \circ D}$.
	As also $\rho_{c_0} \circ \Pomega = \id_{\text{Imm}_{c_0} (I,\mathcal{M})}$, we see that $\Pomega$ is a diffeomorphism onto its image.
	Thus $\Pomega \circ D$ is indeed a smooth embedding.
	
	Since the scaling maps are diffeomorphisms $C^\infty (I,\g \setminus \{0\}) \rightarrow C^\infty (I,\g \setminus \{0\})$, the assertions on the image of $\SRVT$ and on $\SRVT$ follow directly from the assertions on $\Pomega$. 
\end{proof}

\
\begin{setup}[Reductive SRVT]\label{setup: twist}
	Let $\M$ be a reductive homogeneous space with reductive complement $\m$ and $\theta_\omega \colon C^\infty (I,T\M) \rightarrow C^\infty (I,\g),\ f \mapsto \omega \circ f$ be constructed with respect to the $1$-form $\omega$ from \ref{setup: oneform}.
	Then $\Psi_{g_0^{-1}} \circ \theta_\omega (\PP_{c_0}) = C^\infty (I,\m \setminus \{0\})$ (see Appendix \ref{app: aux}).
	Now one constructs a version of the SRVT for reductive spaces via 
	\begin{displaymath}
	\SRVT_\m \colon \PP_{c_0} \rightarrow C^\infty (I, \m \setminus \{0\}) , \quad f\mapsto \frac{\Psi_{g_0^{-1}} \circ \theta_\omega (\dot{f})}{\sqrt{\lVert\Psi_{g_0^{-1}} \circ \theta_\omega (\dot{f})\rVert}}.
	\end{displaymath}
	We call this map \emph{reductive SRVT}, to distinguish it from the usual SRVT. Contrary to the SRVT, the reductive SRVT will go into the reductive complement, but it will not be a section of $\rho_{c_0}$. Instead it is a section of $\rho_{c_0} \circ \Psi_{g_0}$. 
	Finally, we note that by construction (cf.\ Lemma \ref{lem: PSi0diff}) the image of the reductive SRVT is $C^\infty (I,\m \setminus \{0\})$.
\end{setup}

Arguing as in Theorem \ref{thm: pbmetric}, we also obtain a first order Sobolev metric by pullback with the reductive SRVT. 
In general this Riemannian metric will not coincide with the pullback metric obtained from the SRVT.
The advantage of the reductive SRVT is that we have full control over its image, which happens to be an open subset (of a subspace of $C^\infty (I,\g)$).
Since $C^\infty (I,\g)$ with respect to the $L^2$ inner product is a flat space (in the sense of Riemannian geometry), it follows that at least locally the geodesic distance on the image of the SRVT is given by the distance 
$$d_{\PP_{c_0},\m} (f,g) := d_{L^2} (\SRVT_\m(f),\SRVT_\m (g)).$$
However, we argue as in \cite[Theorem 3.16]{celledoni15sao} to obtain the following result.

\begin{prop}\label{prop: gL2dist}
	If $\mathrm{dim}\ \mathfrak{h} + 2 < \mathrm{dim}\ \g$, then the geodesic distance of $(\SRVT (\PP_{c_0}), \langle \cdot,\cdot\rangle_{L^2})$ coincides with the $L^2$ distance.
	In this case the geodesic distance on $\PP_{c_0}$ induced by the pullback metric \eqref{Eq:ElasticMetric} (with respect to the reductive SRVT) is given by 
	$
	d_{\PP_{c_0},\m} (f,g) = \sqrt{\int_I \norm{\SRVT_\m(f)(t) - \SRVT_\m (g)(t)}^2\di t} .
	$
\end{prop}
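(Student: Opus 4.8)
The strategy is to reduce the statement to the situation already handled in \cite[Theorem 3.16]{celledoni15sao} by exhibiting the image of the (ordinary) SRVT, equipped with the restricted $L^2$ inner product, as a submanifold of the \emph{flat} space $C^\infty(I,\g)$ which is \emph{open in its own closed linear span} and whose complement in that span has codimension at least three. Concretely, by Lemma~\ref{lem: SRVT:redemb} the diffeomorphism $\Psi_{g_0^{-1}}$ carries $\SRVT(\PP_{c_0})$ onto $C^\infty(I,\m\setminus\{0\})$, and by \ref{setup: twist} the reductive SRVT is exactly $\SRVT_\m = \Psi_{g_0^{-1}}\circ \SRVT$ (on immersions). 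Thus the first step is to record that $C^\infty(I,\m\setminus\{0\})$ is an \emph{open} subset of the closed topological vector subspace $C^\infty(I,\m)\subseteq C^\infty(I,\g)$: openness of $\m\setminus\{0\}$ in $\m$ pulls back under the (continuous) evaluation maps, using that $I$ is compact, just as in the one-dimensional statements \eqref{eq: scaling}.

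The second step is the dimension count. Since $\g = \h\oplus\m$ with $\dim\h + 2 < \dim\g$, we have $\dim\m \geq 3$, so $\m\setminus\{0\}$ is connected and, more importantly, the $L^2$ geodesic (straight line) between any two points of $C^\infty(I,\m\setminus\{0\})$ can be perturbed within $C^\infty(I,\m)$ to avoid the "bad set" $\{q : q(t_0)=0 \text{ for some } t_0\}$ while keeping the same endpoints and essentially the same length. This is precisely the argument of \cite[Theorem 3.16]{celledoni15sao}: in a flat space the infimum of lengths of paths between two points that stay in an open dense (here: open, codimension $\geq 3$) subset equals the straight-line distance, because codimension at least $2$ suffices to homotope a generic straight segment off the removed affine subspaces pointwise, and codimension $\geq 3$ gives room to do so smoothly in the curve parameter with arbitrarily small length increase. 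I would invoke that theorem essentially verbatim, with $\m$ in place of the Lie algebra there, after noting $C^\infty(I,\m)$ with $\langle\cdot,\cdot\rangle_{L^2}$ is a flat weak Riemannian manifold whose geodesics are straight lines.

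The third step transports the conclusion back: since $\Psi_{g_0^{-1}}$ is a diffeomorphism and by construction the pullback metric in \ref{setup: twist} is defined so that $\SRVT_\m$ is a Riemannian immersion (indeed embedding, by Lemma~\ref{lem: SRVT:redemb}) from $\PP_{c_0}$ onto $C^\infty(I,\m\setminus\{0\})$ with the $L^2$ metric, the geodesic distance on $\PP_{c_0}$ for the reductive pullback metric \eqref{Eq:ElasticMetric} equals the geodesic distance on $(C^\infty(I,\m\setminus\{0\}),\langle\cdot,\cdot\rangle_{L^2})$, which by the previous step is the $L^2$ distance $d_{L^2}(\SRVT_\m(f),\SRVT_\m(g))$. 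Unwinding the definition of $d_{\PP_{c_0},\m}$ yields the displayed formula. One must also check that the $L^2$ geodesic distance on $\SRVT(\PP_{c_0})$ itself (before applying $\Psi_{g_0^{-1}}$) agrees with the $L^2$ distance; but $\Psi_{g_0^{-1}}$ is in general \emph{not} an $L^2$-isometry, so here one argues directly on the $\m$-side and only claims the statement for the reductive SRVT, exactly as the proposition is phrased.

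\textbf{Main obstacle.} The delicate point is the flat-space length argument of step two: one needs that removing, for each $t$, the affine subspace $\{q(t)=0\}$ from $C^\infty(I,\m)$ does not change geodesic distances locally, and this genuinely uses $\dim\m\geq 3$ (codimension $\geq 3$ in the finite-dimensional fibre, hence enough room to perturb a straight path off the zero section smoothly in $t$ with negligible added length). Making this precise in the infinite-dimensional Fréchet setting — i.e.\ that the perturbation can be chosen in $C^\infty(I,\m)$ and depends smoothly enough to stay a valid competitor path — is the technical heart, and it is exactly what \cite[Theorem 3.16]{celledoni15sao} supplies; the work here is to verify that its hypotheses (flatness, openness of the image, the codimension bound) are met in the reductive setting, which the steps above arrange.
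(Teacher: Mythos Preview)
Your argument is correct and matches the paper's own approach, which simply defers to \cite[Theorem 3.16]{celledoni15sao} after noting that the image of the reductive SRVT is the open subset $C^\infty(I,\m\setminus\{0\})$ of the flat space $C^\infty(I,\m)$ with $\dim\m\geq 3$. One small slip: the identity $\SRVT_\m = \Psi_{g_0^{-1}}\circ\SRVT$ is false in general, since $\Psi_{g_0^{-1}}$ is nonlinear and does not commute with the scaling map $\mathrm{sc}$; fortunately you never actually use this factorisation, and your step three correctly argues directly from the definition of $\SRVT_\m$ as a Riemannian embedding onto $(C^\infty(I,\m\setminus\{0\}),\langle\cdot,\cdot\rangle_{L^2})$.
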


Note that the modification by the reductive SRVT is highly non-linear, e.g.\ in the Lie group case, Example \ref{ex: LIEGP}, we obtain:
\begin{ex}
	Let $G$ be a Lie group, $c \in ^\infty (I, G)$ and $\delta^l (c) = c^{-1}\dot{c}$. Then 
	\begin{displaymath}
	\Psi (\delta^r (c)) = -\Ad (\Evol (\delta^r (c))^{-1}).\delta^r (c) = -\Ad (c^{-1}). \dot{c} c^{-1} = - \delta^l (c).
	\end{displaymath}
	Recall from \cite[38.4]{MR1471480} that $\Evol (-\delta^l (c))(t) = (c(t))^{-1}$. 
	In the Lie group case, the reductive SRVT modifies the formulae to compute distances and interpolations between the pointwise inverses of curves instead of the curves themselves. 
	In particular, this shows that the reductive SRVT will not be a section of $\rho_{c_0}$.  
\end{ex}

In particular, we have to prove a version of Lemma \ref{lem: repinv} for the reductive SRVT.

\begin{lem}\label{lem: red:repinv}
	For a reductive space, $d_{\PP_{c_0},\m}$ is reparametrisation invariant.
\end{lem}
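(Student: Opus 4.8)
The plan is to reduce the statement for the reductive SRVT to the already-established reparametrisation invariance of the ordinary SRVT in Lemma~\ref{lem: repinv}. The key observation is that $\SRVT_\m$ and $\SRVT$ differ only by postcomposition with the pointwise linear isomorphism $\Psi_{g_0^{-1}}$ together with a rescaling, and crucially $\Psi_{g_0^{-1}}$ commutes with reparametrisation in the sense that $\Psi_{g_0^{-1}}(q \circ \varphi) = \Psi_{g_0^{-1}}(q) \circ \varphi$ for $\varphi \in \Diffeom$. This last identity is not quite obvious: $\Psi_{g_0^{-1}}(q)(t) = -\Ad(g_0^{-1}\Evol(q)(t)).q(t)$ involves $\Evol(q)$, and $\Evol(q \circ \varphi) \neq \Evol(q) \circ \varphi$ in general. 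So the first step is to see what $\theta_\omega(\dot f)$ does under reparametrisation and then feed it into $\Psi_{g_0^{-1}}$ directly, rather than trying to commute $\Psi_{g_0^{-1}}$ past $\varphi$.

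First I would compute, exactly as in the proof of Lemma~\ref{lem: repinv}, that for $f \in \PP_{c_0}$ and $\varphi \in \Diffeom$ one has $\theta_\omega(\widehat{(f\circ\varphi)\dot{\phantom{x}}}) = \theta_\omega(\dot f \circ \varphi)\cdot\dot\varphi = (\theta_\omega(\dot f)\circ\varphi)\cdot\dot\varphi$, using that $\omega$ is fibrewise linear. Write $q := \theta_\omega(\dot f)$, so we must understand $\Psi_{g_0^{-1}}\big((q\circ\varphi)\cdot\dot\varphi\big)$. Now $\Evol\big((q\circ\varphi)\dot\varphi\big)(t)$ solves $\frac{\di}{\di t}g = R_{g*}(q(\varphi(t))\dot\varphi(t))$, $g(0)=e$, and by the chain rule this is precisely $\Evol(q)(\varphi(t))$ (since $\varphi(0)=0$, as $\varphi$ is an orientation-preserving diffeomorphism of $I$). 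Hence $\Evol\big((q\circ\varphi)\dot\varphi\big) = \Evol(q)\circ\varphi$, and therefore
\begin{align*}
\Psi_{g_0^{-1}}\big((q\circ\varphi)\dot\varphi\big)(t) &= -\Ad\!\big(g_0^{-1}\Evol(q)(\varphi(t))^{-1}\big).\big(q(\varphi(t))\dot\varphi(t)\big)\\
&= \dot\varphi(t)\cdot\Psi_{g_0^{-1}}(q)(\varphi(t)) = \big(\Psi_{g_0^{-1}}(q)\circ\varphi\big)(t)\cdot\dot\varphi(t),
\end{align*}
using linearity of $\Ad$ to pull the scalar $\dot\varphi(t)$ out. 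So $\Psi_{g_0^{-1}}\circ\theta_\omega$ transforms under reparametrisation exactly like $\theta_\omega$ alone.

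With this in hand, the same scaling computation as in Lemma~\ref{lem: repinv} applies verbatim: setting $P := \Psi_{g_0^{-1}}\circ\theta_\omega$, we get $\SRVT_\m(f\circ\varphi) = \dfrac{P(\dot f)\circ\varphi \cdot \dot\varphi}{\sqrt{\lVert P(\dot f)\circ\varphi\cdot\dot\varphi\rVert}} = \big(\SRVT_\m(f)\circ\varphi\big)\cdot\sqrt{\dot\varphi}$, and then the substitution $s = \varphi(t)$, $\di s = \dot\varphi(t)\,\di t$ in the integral defining $d_{\PP_{c_0},\m}$ gives $d_{\PP_{c_0},\m}(f\circ\varphi, g\circ\varphi) = d_{\PP_{c_0},\m}(f,g)$. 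I expect the main obstacle to be the verification of the commutation identity $\Evol\big((q\circ\varphi)\dot\varphi\big) = \Evol(q)\circ\varphi$ — it is a short chain-rule argument, but it is the one genuinely new ingredient compared to Lemma~\ref{lem: repinv}, and one must be careful that $\varphi(0) = 0$ so that the initial condition $g(0) = e$ is preserved. Everything else is a routine adaptation of the earlier proof.
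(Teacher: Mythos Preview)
Your proof is correct and follows essentially the same route as the paper: both reduce to Lemma~\ref{lem: repinv} by first establishing the key identity $\Evol\big((q\circ\varphi)\dot\varphi\big) = \Evol(q)\circ\varphi$ (the paper cites Kriegl--Michor for this, you give the direct chain-rule argument), then use linearity of the adjoint action to pull out the scalar $\dot\varphi$, and finish with the same scaling and substitution as before. One small typo: in your expository sentence you write $\Psi_{g_0^{-1}}(q)(t) = -\Ad(g_0^{-1}\Evol(q)(t)).q(t)$, omitting the inverse on $\Evol(q)(t)$, though your displayed computation has it right.
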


\begin{proof}
	For $\SRVT_\m$ we use $\Psi_{g_0^{-1}} \circ \theta_\omega$ instead of $\alpha = \theta_\omega$. Consider $f \in \PP_{c_0}$ and $\varphi \in \Diffeom$ to compute as in Lemma \ref{lem: repinv}: 
	$\Psi_{g_0}^{-1} (\theta_\omega (f\circ \varphi)) = \Psi_{g_0}^{-1} (\dot{\varphi} \cdot \theta_\omega (f) \circ \varphi)$.
	Now $$\Evol (q) \circ \varphi  = \Evol (\dot{\varphi} \cdot q\circ \varphi)\underbrace{\Evol (q) (\varphi (0))}_{=e \text{ since }\varphi(0)=0} = \Evol (\dot{\varphi} \cdot q\circ \varphi)$$
	follows from \cite[p.\ 411]{MR1471480}. Linearity of the adjoint action yields $\Psi_{g_0}^{-1} (\theta_\omega (f\circ \varphi)) = (\Psi_{g_0}^{-1} (\theta_\omega (f)) \circ \varphi )\cdot \dot{\varphi}$. 
	Inserting this in \eqref{eq: equivariance} yields reparametrisation invariance.
\end{proof}

\section{The SRVT on matrix Lie groups}\label{examplesmatrices}

In order to illustrate our definition of the SRVT in different instances of homogeneous manifolds, we consider in what follows two examples of quotients of finite dimensional matrix Lie groups (for $n\geq 3$ and $p <n$):
\begin{itemize}
	\item[1.] $\SO (n) /(\SO (n-p)\times \SO (p))$ (see \ref{ss: Grass}). 
	\item[2.] $\SO (n) /\SO (n-p)$ (see \ref{ss: Stiefel}).
	% \item[3.] $\SL (n) / \SL (n-p)$ for $1 \leq p< n$ 
	%\item[4.] $\GL (n) / \GL (n-p)$ for $1 \leq p< n$
\end{itemize}
Note that in both cases the quotients are reductive homogeneous spaces.
To prepare our investigation, we will now collect some information on relevant tangent spaces for the matrix Lie groups.
These examples are relevant in applications \cite{absil08oao}. % our presentation follows \cite{celledoni03oti}, for an overview see \cite{munthekaas15ioh}.

\subsection{Tangent space of $G/H$ and tangent map of $G\rightarrow G/H$}

For  $G$ and $H$ finite dimensional (matrix) Lie groups, we here describe the tangent space of $G/H$ at a prescribed point  $c_0$ and the tangent mapping of the canonical projection $\pi:G\rightarrow G/H$. We have seen that any curve $c(t)$ on $G/H$, $c(0)=c_0$, can be expressed non-uniquely by means of  a curve on the Lie group $c(t)=\pi(g(t))$. 
For matrix Lie groups, the elements of $G/H$ are equivalence classes of matrices. 
Let the elements of $G$, $g\in G$, be $n\times n$ matrices, then the group multiplication coincides with matrix multiplication. We
 identify elements of $H\subset G$ with matrices
 \begin{equation}
 \label{isotropyGroup}
 %\gamma 
 h=\left[ \begin{array}{cc}
I & 0\\
0 & \Gamma
\end{array} \right] , \quad \text{$\Gamma$ a $(n-p)\times(n-p)$ matrix and $I$ the $p\times p$ identity.}
\end{equation} 
%The equivalence class is
%$\pi(g)=[g]=\{ \tilde{g}\, |\, \tilde{g}=g\, h, \quad  h\in H \},$ 
%and the Lie group action is by matrix multiplication from the left, $g'\in G\rightarrow [g'\, g]\in G/H$.

%is then
%\begin{equation}
%\label{projectionMat}
%\pi(g)=[g]=\{ \tilde{g}\, |\, \tilde{g}=g\, h, \quad  h\in H \},
%\end{equation}

%where $\gamma\in H$ is thought as an element of $G$ and takes the form
%$$\gamma=\left[ \begin{array}{cc}
%I & O\\
%O & \Gamma
%\end{array} \right],$$
%$\Gamma$ is $(n-p)\times(n-p),$ and with $I$ the $p\times p$ identity.

We obtain $T_{g_0}\pi:T_{g_0}G\rightarrow T_{\pi(g_0)}G/H$, $v\mapsto w$, by differentiating $c(t)=\pi(g(t))$. Assuming $g(0)=g_0$, $\pi(g_0)=c_0$, $\dot{g}(0)=v\in T_{g_0}G$,  we have %with $v=A_0g_0=g_0\,\mathrm{Ad}_{g_0^{-1}}(A_0)$,  
\begin{displaymath}
w:=T_{\pi(g_0)}(v)=\left. \frac{d}{dt}\right|_{t=0}\pi(g(t))=\left\{ \left. \frac{d}{dt}\right|_{t=0}\tilde{g}(t)\, |\, \tilde{g}(t)=g(t)\, h(t), \quad h (t)\in H\right\}.
\end{displaymath}
%\begin{eqnarray*}
%w:=T_{\pi(g_0)}(v)&=&\left. \frac{d}{dt}\right|_{t=0}\pi(g(t))\\\left(A(t)+\mathrm{Ad}_{g(t)}(B(t)) \right)
%&=& \left. \frac{d}{dt}\right|_{t=0}\{\tilde{g}(t)\, |\, \tilde{g}(t)=g(t)\, h(t), \quad h (t)\in H\}.
%\end{eqnarray*}
Assuming $\dot{g}(t)=A(t)g(t)$, where $A(t) \in \mathfrak{g}$,  $v=A_0g_0=g_0\,\mathrm{Ad}_{g_0^{-1}}(A_0)$,   and  assuming also that $\frac{d}{dt} h(t)=B(t)h(t), \, B(t)\in \mathfrak{h},\quad B(0)=B_0$, in analogy to \eqref{log: deriv}, we get 
\begin{equation}
\label{diffcurveinM}
 \quad \frac{d}{dt} \tilde{g}(t)= %A(t)g(t)\gamma(t)+g(t)G(t)\gamma(t)=
 \left(A(t)+\mathrm{Ad}_{g(t)}(B(t)) \right)\,g(t)h(t)=g(t)\left(\mathrm{Ad}_{g(t)^{-1}}(A(t))+B(t) \right)\,h(t),
 \end{equation}
so we obtain  
\begin{eqnarray*}
w:=T_{\pi(g_0)}(v)&=&%\left. \frac{d}{dt}\right|_{t=0}\pi(g(t))\\
%&=& \left. \frac{d}{dt}\right|_{t=0}\{\tilde{g}(t)\, |\, \tilde{g}(t)=g(t)\, h(t), \quad h (t)\in H\} \\ 
% &&\frac{d}{dt} h(t)=B(t)h(t), \, B(t)\in \mathfrak{h},\quad B(0)=B_0, \quad \frac{d}{dt} \tilde{g}(t)= %
%A(t)g(t)\gamma(t)+g(t)G(t)\gamma(t)=
 %\left(A(t)+\mathrm{Ad}_{g(t)}(B(t)) \right)\,g(t)h(t)\\
%&=&
 \left\{ \tilde{w}=\left. \frac{d}{dt}\right|_{t=0}\tilde{g}(t) \,|\, \tilde{w}=(A_0+ \mathrm{Ad}_{g_0}(B_0 ))\,g_0\, h, \quad h\in H, B_0\in\mathfrak{h}\right\} \\%\qquad A\in\mathfrak{g}, \,v=Ag_0,\\
&=& \left\{ \tilde{w}=\left. \frac{d}{dt}\right|_{t=0}\tilde{g}(t) \,|\, \tilde{w}=g_0\,(\mathrm{Ad}_{g_0^{-1}}(A_0)+B_0)\, h,\quad h\in H, B_0\in\mathfrak{h}\right\}, %\qquad A\in\mathfrak{g}, \, v=g_0\,\mathrm{Ad}_{g_0^{-1}}(A),
\end{eqnarray*}
which gives a  description of the tangent vector $w\in T_{c_0}G/H$ as well as the characterisation of $T\pi$ for matrix Lie groups. %If the negative Killing form is an inner product on the Lie algebra $\mathfrak{g}$ we can decompose $\tilde{A}$ uniquely in two orthogonal components
%$$\tilde{A}=\tilde{A}_{\mathfrak{m}}+\tilde{A}_{\mathfrak{h}}$$
Suppose that we fix a complementary subspace $\mathfrak{m}$ of $\mathfrak{h}$, $\mathfrak{g}=\mathfrak{h}\oplus\mathfrak{m}$, then there is a unique isotropy element $B_0\in\mathfrak{h}$ such that $\mathrm{Ad}_{g_0^{-1}}(A_0)+B_0\in \mathfrak{m}$. 

Repeating this procedure for each value of $t$ along a curve $c(t)$, we can assume $c(t)=\pi (g(t))$ and $w(t) \in T_{\pi(g(t))}G/H$, $w(t)=(A(t)+\mathrm{Ad}_{g(t)}(B(t)) )c(t)$ with $A(t)\in\mathfrak{g}$, $B(t)\in \mathfrak{h}$, such that $\mathrm{Ad}_{g(t)^{-1}}(A(t))+B(t)\in \mathfrak{m}$, then we can define 
$$\alpha : T_{\pi(g(t))}G/H\rightarrow \mathrm{Ad}_{g(t)}(\mathfrak{m}),\qquad \alpha (w(t))=A(t)+\mathrm{Ad}_{g(t)}(B(t)).$$
This map corresponds to the map $\theta_{\omega}$ of \ref{setup: alphared} with $\omega$ as described in \ref{setup: oneform}.
If $\m$ is reductive, this map is well defined (independently of the choice of representative $g(t)$ of $c(t)=\pi (g(t))$).
We refer to Table~\ref{tab:table1} for different, possible choices of $\mathfrak{m}$ and their implications. In the following examples $\m$ is reductive and $H$ is compact.

\subsection{SRVT on the Stiefel manifold: $\SO (n) / \SO (n-p)$.}\label{ss: Stiefel}

%In order to be able to define a retraction map on the Stiefel manifold
%we need a convenient description of the tangent space.
In this section we consider the case when $G=\mathrm{SO}(n)$ and $H=\mathrm{SO}(n-p)\subset \mathrm{SO}(n)$, where the elements of $\mathrm{SO}(n-p)$ are  of the type \eqref{isotropyGroup} with $\Gamma$ a $(n-p)\times (n-p)$ orthogonal matrix with determinant equal to $1$. We consider the canonical left action of 
$\mathrm{SO}(n)$ on the quotient $\mathrm{SO}(n) / \mathrm{SO}(n-p)$. This homogeneous manifold can be identified with the Stiefel manifold, $\M=\StfR$, i.e. the set of $p$-orthonormal frames in $\mathbb{R}^n$ (real matrices $n\times p$ with orthonormal columns). We will in the following denote by $[U,U^{\perp}]$ the elements of $\mathrm{SO}(n)$ where we have collected in $U$ the first $p$ orthonormal columns and in $U^{\perp}$ the last $n-p$. Multiplication from the right by an arbitrary element in the isotropy subgroup  $\mathrm{SO}(n-p)$ gives $[U,U^{\perp}\Gamma]$, leaving the first $p$ columns unchanged and orthonormal to the last $n-p$, for all choices of $\Gamma$. Here $U$ alone represents the whole coset of $[U,U^{\perp}]$.
%The coset of the identity $eH$ is  identified with the $n\times p$ matrix $I_p$ whose columns are the first $p$ columns of the $n\times n$ identity matrix. 
When thought of as a map from $\mathrm{SO}(n)$ to $\mathrm{SO}(n)/\mathrm{SO}(n-p)$, the projection $\pi\colon\mathrm{SO}(n)\rightarrow \mathrm{SO}(n)/\mathrm{SO}(n-p)$ is %from $\mathrm{SO}(n)$ to $\mathrm{SO}(n)/\mathrm{SO}(n-p)$ is
$$\pi([U,U^{\perp}])=\{\tilde{g}\in \mathrm{SO}(n)\, | \, \tilde{g}=[U,U^{\perp}\Gamma],\quad \forall \, \Gamma\in \mathrm{SO}(n-p)\}.$$
Otherwise, when thought of as a map from $\pi:\mathrm{SO}(n)\rightarrow \StfR$, the canonical projection  %from $\mathrm{SO}(n)$ to $\mathrm{SO}(n)/\mathrm{SO}(n-p)$ 
conveniently becomes
$\pi([U,U^{\perp}])=[U,U^{\perp}]\,I_p=U,$
where $I_p$ is the $n\times p$ matrix whose columns are the first $p$ columns of the $n\times n$ identity matrix. The equivalence class of the group identity element $\pi(e)$ is  identified with the $n\times p$ matrix $I_p$. Similarly the tangent mapping of the projection $\pi$,  
{
$$T\pi: T\mathrm{SO}(n)\rightarrow T\mathrm{SO}(n)/\mathrm{SO}(n-p),\qquad v\in T_g\mathrm{SO}(n)\mapsto w\in T_{\pi(g)}\mathrm{SO}(n)/\mathrm{SO}(n-p),$$
}
 with $g=[ U , U^{\perp} ]$, $v=A\, [ U , U^{\perp} ] \in T_{[U,U^{\perp}]}\mathrm{SO}(n)$ and $A\in \mathfrak{so}(n)$), can be realised as

\begin{equation}
\label{quotientSO} 
 T_{[U,U^{\perp}]}\pi(A\,[U,U^{\perp}]) =\left\{\tilde{w}\in T_{[U,U^{\perp}\Gamma ]}\SO(n)\, \middle|\, \begin{aligned} \tilde{w}=[U,U^{\perp}](\mathrm{Ad}_{[U,U^{\perp}]^T}(A)+B)\,\Gamma ,\\ \forall \,\Gamma \in\mathrm{SO}(n-p),\, B\in\mathfrak{so}(n-p) \end{aligned}\right\},
\end{equation}

while $T\pi: T\mathrm{SO}(n)\rightarrow T\StfR$
by multiplication from the right by $I_p$, and %of the tangent vectors at a point $[U,U^{\perp}]$ on the Lie group. So  then
\begin{equation}
\label{tanvecStiefel}
T_{[U,U^{\perp}]}\pi(A\,[U,U^{\perp}])=A\,[U,U^{\perp}]\,I_p=AU\in T_U \StfR .
\end{equation}
Alternatively, a characterisation of tangent vectors can be obtained by differentiation of curves on $\StfR$. 
We
have then that
$$T_Q\StfR=\{ V \hskip0.2cm n\times p \hskip0.2cm \mathrm{matrix}\hskip0.1cm|\hskip0.1cm
Q^{\mathrm{T}}V\hskip0.2cm p\times p
\hskip0.2cm \text{
    skew-symmetric} \}.$$

\begin{prop}\cite{celledoni03oti}
Any tangent vector $V$ at $Q\in\StfR$ can be written as
%\begin{equation}
%V=(FQ^{\mathrm{T}}-QF^{\mathrm{T}})\, Q,\quad \mathrm{with}\quad F:=V-Q\,\frac{Q^{\mathrm{T}}V}{2}\in T_Q\M.
%\end{equation}
\begin{align}
\label{eq2.1}%
V&=(FQ^{\mathrm{T}}-QF^{\mathrm{T}})\, Q,\\ %\quad \mathrm{with}\,F:&=V-Q\,\frac{Q^{\mathrm{T}}V}{2}\in T_Q\M.
%\label{tangentStiefel}
F:&=V-Q\,\frac{Q^{\mathrm{T}}V}{2}\in T_Q\M.
\end{align}
\end{prop}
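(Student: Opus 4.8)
The plan is to verify the formula by a direct matrix computation, using only the two structural facts at hand: since the columns of $Q$ are orthonormal, $Q^{\mathrm{T}}Q$ is the $p\times p$ identity matrix; and, by the characterisation of the tangent space recorded above, $V\in T_Q\StfR$ means precisely that $Q^{\mathrm{T}}V$ is $p\times p$ skew-symmetric. No infinite-dimensional machinery is needed here; this is the pointwise, linear-algebra version of the transport map $\alpha = \theta_\omega$ from \ref{setup: alphared}.

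First I would introduce the shorthand $S := Q^{\mathrm{T}}V$, which is skew-symmetric because $V\in T_Q\StfR$. Then $F = V - \tfrac12 QS$, and one immediately computes $Q^{\mathrm{T}}F = Q^{\mathrm{T}}V - \tfrac12 (Q^{\mathrm{T}}Q)S = S - \tfrac12 S = \tfrac12 S$, which is again skew-symmetric; hence $F\in T_Q\StfR = T_Q\M$, which settles the second assertion. I would also note at this point that $\Omega := FQ^{\mathrm{T}} - QF^{\mathrm{T}}$ is manifestly skew-symmetric as an $n\times n$ matrix, i.e.\ $\Omega\in\so(n)$, so the right-hand side $\Omega Q$ is a genuine lift of $V$ through a curve in the group, consistent with the role of $\omega$ in \ref{setup: oneform}.

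Next I would expand $\Omega$. Substituting $F = V - \tfrac12 QS$ and using $S^{\mathrm{T}} = -S$ gives $FQ^{\mathrm{T}} = VQ^{\mathrm{T}} - \tfrac12 QSQ^{\mathrm{T}}$ and $QF^{\mathrm{T}} = QV^{\mathrm{T}} - \tfrac12 QS^{\mathrm{T}}Q^{\mathrm{T}} = QV^{\mathrm{T}} + \tfrac12 QSQ^{\mathrm{T}}$, so that
$$FQ^{\mathrm{T}} - QF^{\mathrm{T}} = VQ^{\mathrm{T}} - QV^{\mathrm{T}} - QSQ^{\mathrm{T}}.$$
Multiplying on the right by $Q$ and using $Q^{\mathrm{T}}Q = I$ together with $V^{\mathrm{T}}Q = (Q^{\mathrm{T}}V)^{\mathrm{T}} = S^{\mathrm{T}} = -S$, the three summands become $V$, then $-QV^{\mathrm{T}}Q = QS$, and finally $-QSQ^{\mathrm{T}}Q = -QS$. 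Adding these yields $V + QS - QS = V$, which is exactly $V = (FQ^{\mathrm{T}} - QF^{\mathrm{T}})Q$.

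Since each step is a one-line matrix identity, there is no genuine obstacle; the only place requiring care is the transpose-and-sign bookkeeping — in particular keeping $QSQ^{\mathrm{T}}$ distinct from its transpose and tracking the sign contributed by skew-symmetry of $S$ — and, for the record, observing that $\Omega = FQ^{\mathrm{T}} - QF^{\mathrm{T}}$ is precisely the value of the $1$-form $\omega$ on $V$ (up to the identification of the reductive complement $\m$ with its $\Ad$-translate), so the formula dovetails with the general reductive construction rather than being an ad hoc device.
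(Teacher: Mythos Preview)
Your computation is correct. The paper does not supply its own proof of this proposition; it merely states the result with a citation to \cite{celledoni03oti} and adds the remark that replacing $F$ by $V-Q\bigl(\tfrac{Q^{\mathrm{T}}V}{2}+S\bigr)$ for any symmetric $p\times p$ matrix $S$ leaves \eqref{eq2.1} unchanged. Your direct verification via the shorthand $S=Q^{\mathrm{T}}V$ and the identity $Q^{\mathrm{T}}Q=I$ is exactly the kind of elementary matrix argument one would expect, and it also confirms the membership $F\in T_Q\M$ that the paper asserts without justification.
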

\noindent And notice that replacing $F$ with $F:=V-Q(\frac{Q^{\mathrm{T}}V}{2}+S)$, where $S$ is an arbitrary $p\times p$ symmetric matrix, does not affect \eqref{eq2.1}.

We proceed by using the representation (\ref{eq2.1}) of $T_Q\StfR$  and the framework described in Definition \ref{defn:SRVT} for
defining an SRVT on the Stiefel manifold. 
Consider
\begin{align}
\label{SRVT1}
f_Q:T_Q\M\rightarrow T_Q\M, &&
f_Q(V)&=V-Q\frac{Q^{\mathrm{T}}V}{2}, \\ %\quad S=O\, \mathrm{or}\,\mathrm{arbitrary}\, \mathrm{symmetric}\\
\label{SRVT2}
a_Q:T_Q\M\rightarrow \mathfrak{m}_Q\subset \g, &&
a_Q(V)&=f_Q(V)Q^{\mathrm{T}}-Qf_Q(V)^{\mathrm{T}}.
\end{align}
The SRVT of a curve $Y(t)$ on the Stiefel manifold is a curve on $\mathfrak{so}(n)$ defined by
\begin{equation}
\label{SRVTStiefel}
\mathcal{R}(Y):=\frac{a_{Y}(\dot{Y})}{\sqrt{\|a_{Y}(\dot{Y})\|}}=\frac{f_Y(\dot{Y})Y^{\mathrm{T}}-Yf_Y(\dot{Y})^{\mathrm{T}} }{\sqrt{\|f_Y(\dot{Y})Y^{\mathrm{T}}-Yf_Y(\dot{Y})^{\mathrm{T}}\|}}.
%\frac{a_{Y}(\dot{Y})}{\sqrt{\|a_{Y}(\dot{Y})\|}}.
\end{equation}

As the Stiefel manifold is a reductive homogeneous space, we can define a reductive SRVT in this case.
%The following remarks are important for later use. 
Denoting with $[Q,Q^{\perp}]$ a representative in $SO(n)$ of the equivalence class identified by $Q$ on $\StfR$,
we observe that
$$V=\mathrm{Ad}_{[Q Q^{\perp}]}(G)I_p\quad \text{with}\quad G:=[Q Q^{\perp}]^TFI_p^T-I_pF^T[Q Q^{\perp}].$$
%with
%$$G:=[Q Q^{\perp}]^TgI_p^T-I_pg^T[Q Q^{\perp}].$$
%here we have denoted by $I_p$ the element of $\StfR$ corresponding at the first $p$ columns of the identity matrix.
Assuming the right invariant metric on $SO(n)$ is the negative Killing form, then we observe that $G$ belongs to the orthogonal complement of the subalgebra $\mathfrak{so}(n-p)$ in $\mathfrak{so}(n)$ with respect to this inner product. As stated in Table~\ref{tab:table1}, this orthogonal complement is the reductive complement, i.e. $\mathfrak{m}=\mathfrak{so}(n-p)^{\perp}$, and $\mathrm{Ad}_{\SO(n-p)}(\mathfrak{so}(n-p)^{\perp})\subset \mathfrak{so}(n-p)^{\perp}$. The elements of such an orthogonal complement $\mathfrak{so}(n-p)^{\perp}$ are  matrices $W\in \mathfrak{so}(n)$ of the form
\begin{equation}
\label{reductiveStiefel}
W=\left[  \begin{array}{cc}
\Omega & \Sigma^T\\
-\Sigma & 0
\end{array}\right],
\end{equation}
with $\Omega \in \mathfrak{so}(p)$ and $\Sigma$ an arbitrary $(n-p)\times p$ matrix. 
Consider  the maps
\begin{align}
\label{SRVTm1}
\tilde{f}_Q:T_Q\M\rightarrow T_{I_p}\M, &&
\tilde{f}_Q(V)&=[Q Q^{\perp}]^TV-I_p\frac{Q^{\mathrm{T}}V}{2}, \\ %\quad S=O\, \mathrm{or}\,\mathrm{arbitrary}\, \mathrm{symmetric}\\
\label{SRVTm2}
\tilde{a}_Q:T_Q\M\rightarrow \mathfrak{m}\subset \g, &&
\tilde{a}_Q(V)&=\tilde{f}_Q(V)I_p^{\mathrm{T}}-I_p\tilde{f}_Q(V)^{\mathrm{T}},
\end{align}
and we observe that $\tilde{a}_Q(V)\in\mathfrak{m}$. Then the reductive SRVT is
\begin{equation}
\label{SRVTStiefelred}
\mathcal{R}_{\mathfrak{m}}(Y):=\frac{\tilde{a}_{Y}(\dot{Y})}{\sqrt{\|\tilde{a}_{Y}(\dot{Y})\|}}=\frac{\tilde{f}_Y(\dot{Y})I_p^{\mathrm{T}}-I_p\tilde{f}_Y(\dot{Y})^{\mathrm{T}} }{\sqrt{\|\tilde{f}_Y(\dot{Y})I_p^{\mathrm{T}}-I_p\tilde{f}_Y(\dot{Y})^{\mathrm{T}}\|}}.
%\frac{a_{Y}(\dot{Y})}{\sqrt{\|a_{Y}(\dot{Y})\|}}.
\end{equation}

\subsection{SRVT on the Grassmann manifold:  $\mathrm{SO} (n) / (\mathrm{SO} (n-p)\times \mathrm{SO} (p)) $.}\label{ss: Grass}

In this section we consider the case when $G=\mathrm{SO}(n)$ and $H=\mathrm{SO}(n-p)\times \mathrm{SO}(p) \subset \mathrm{SO}(n)$ where the elements of $\mathrm{SO}(n-p)\times \mathrm{SO}(p)$ are  of the type 
\begin{equation}
 \label{isotropyGroupGR}
h=\left[ \begin{array}{cc}
\Lambda & 0\\
0 & \Gamma
\end{array} \right],
\end{equation}
with $\Lambda$ a $p\times p$ matrix and $\Gamma$ an $(n-p)\times (n-p)$ matrix, both orthogonal with determinant equal to $1$. We consider the canonical left action of 
$\mathrm{SO}(n)$ on the quotient $\mathrm{SO}(n) / (\mathrm{SO}(n-p)\times \mathrm{SO}(p) )$.  This homogeneous manifold can be identified with a quotient of the Stiefel manifold $\StfR / \mathrm{SO}(p) $ with equivalence classes $[Q]=\{\tilde{Q}\in \StfR \, |\, \tilde{Q}=Q\,\Lambda, \,\,\Lambda\in \mathfrak{so}(p)\}$. %We can fix the isotropy $\Gamma$  requiring that there exist $Q\in \StfR$ such that $\Gamma^TQ^TU\Gamma$ is diagonal. %such that, i.e. the set of real matrices $n\times p$ with orthonormal columns, whose first $p$ rows correspond to a lower triangular %symmetric 
%$p\times p$ matrix. 
We denote such a manifold here with $\mathbf{G}_{p,n}(\mathbb{R})$\footnote{An alternative  representation of $\mathbf{G}_{p,n}$ is given by considering symmetric matrices $P$, $n\times n$, with $\mathrm{rank}(P)=p$ and $P^2=P$, \cite{huper07otg}.}. %For a matrix $A$, we denote with $\mathrm{tril}(A)$ the lower triangular part of $A$.  
%Using this parametrization of $\mathbf{G}_{p,n}$, it is easy to show the following result.
The reductive subspace is $\mathfrak{m}=(\mathfrak{so}(p)\times \mathfrak{so}(n-p))^{\perp}$ with elements as in \eqref{reductiveStiefel} but with $\Omega=0$.
Imposing a choice of isotropy $B\in \mathfrak{so}(p)\times \mathfrak{so}(n-p)$ such that $(\mathrm{Ad}_{[Q,Q^{\perp}]^T}(A)+B) \in \mathfrak{m}$ leads to the following characterisation of tangent vectors. 

\begin{prop}
Any tangent vector $V$ at $Q\in\mathbf{G}_{p,n}(\mathbb{R})$ is an $n\times p$ matrix such that $Q^TV=0$, and $V$ can be expressed in
the form \eqref{eq2.1}
%\begin{equation}\label{eq2.1}%
%V=(gQ^{\mathrm{T}}-Qg^{\mathrm{T}})\cdot Q,
%\end{equation}
with $F=V$. %the $n\times p$ matrix defined by
%\begin{equation}
%\label{tangentGrassman}
%F:=V-Q\, \mathrm{tril}(Q^{\mathrm{T}}V) \in T_Q\M.
%\end{equation}
%$g$ suitable 
\end{prop}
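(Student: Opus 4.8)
The plan is to deduce the statement from the Stiefel case already treated, using the identification $\mathbf{G}_{p,n}(\mathbb{R}) = \StfR/\SO(p)$, where $\SO(p)$ acts on the Stiefel manifold by $Q \mapsto Q\Lambda$ with equivalence classes $[Q] = \{Q\Lambda \mid \Lambda \in \SO(p)\}$. The content of the proposition is twofold: first, that the constraint $Q^TV = 0$ characterises tangent vectors to the Grassmann manifold, and second, that with this constraint the Stiefel formula \eqref{eq2.1} simplifies, the auxiliary matrix $F$ becoming $V$ itself.

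First I would pin down $T_{[Q]}\mathbf{G}_{p,n}(\mathbb{R})$ as a slice of $T_Q\StfR$. Differentiating the $\SO(p)$-action yields the vertical subspace $\{QX \mid X \in \mathfrak{so}(p)\} \subseteq T_Q\StfR$, and since $Q^TQ = I_p$ the map $V \mapsto Q^TV$ carries it isomorphically onto $\mathfrak{so}(p)$. For an arbitrary $V \in T_Q\StfR$ the matrix $S := Q^TV$ is skew, and the splitting $V = QS + (V - QS)$ writes $V$ as a vertical vector plus $V - QS$, which satisfies $Q^T(V - QS) = 0$; moreover the intersection of $\{QX \mid X \in \mathfrak{so}(p)\}$ with $\{W \in \mathbb{R}^{n \times p} \mid Q^TW = 0\}$ is trivial. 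Hence $T_Q\StfR = \{QX \mid X \in \mathfrak{so}(p)\} \oplus \{W \mid Q^TW = 0\}$ as vector spaces (the dimensions agree, $p(n-p) + \binom{p}{2} = \tfrac{p(2n-p-1)}{2} = \dim\StfR$), so $\{V \in \mathbb{R}^{n\times p} \mid Q^TV = 0\}$ represents $T_{[Q]}\mathbf{G}_{p,n}(\mathbb{R})$. This is also the representation distinguished by the reductive complement $\m = (\mathfrak{so}(p)\times\mathfrak{so}(n-p))^\perp$, whose elements are the matrices \eqref{reductiveStiefel} with $\Omega = 0$, since $T\pi$ carries exactly these onto the $V$ with $Q^TV = 0$. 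Conversely every such $V$ is genuinely tangent: $W := VQ^T - QV^T \in \mathfrak{so}(n)$, and $t \mapsto \pi(\exp(tW)Q)$ is a smooth curve in $\mathbf{G}_{p,n}(\mathbb{R})$ whose velocity at $t=0$ is $WQ = VQ^TQ - Q(Q^TV)^T = V$.

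Second, with the constraint $Q^TV = 0$ in hand I would invoke the Stiefel proposition from \cite{celledoni03oti}: any tangent vector to $\StfR$ at $Q$ equals $(FQ^T - QF^T)Q$ with $F = V - Q\tfrac{Q^TV}{2}$. Since $Q^TV = 0$ this collapses to $F = V$, which is the claim, and one checks directly that $(VQ^T - QV^T)Q = VQ^TQ - Q(Q^TV)^T = V$, closing the argument.

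The reasoning is largely bookkeeping; the one point requiring care is the first step, namely correctly separating the $\SO(p)$-fibre directions from the base directions and verifying that the horizontal slice $\{Q^TV = 0\}$ genuinely models $T_{[Q]}\mathbf{G}_{p,n}(\mathbb{R})$. This is precisely where the reductive complement $\m$ from \ref{setup: oneform}--\ref{setup: alphared}, the block with $\Omega = 0$, enters, and it explains why the normalisation $F = V$ replaces the Stiefel value $F = V - Q\tfrac{Q^TV}{2}$.
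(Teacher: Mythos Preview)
Your argument is correct. The route differs from the paper's: the paper works from the $\SO(n)$ level, invoking the general tangent description \eqref{diffcurveinM} with $g(t)=[Q(t),Q(t)^\perp]$ and the full isotropy $h(t)\in\SO(p)\times\SO(n-p)$ of the form \eqref{isotropyGroupGR}, imposing the choice $B\in\mathfrak{so}(p)\times\mathfrak{so}(n-p)$ so that $\mathrm{Ad}_{g^{-1}}(A)+B$ lands in the reductive complement $\m$ (block \eqref{reductiveStiefel} with $\Omega=0$), and then projecting to $\StfR$ by post-multiplication with $I_p$. You instead work one level down, treating $\mathbf{G}_{p,n}(\mathbb{R})$ as $\StfR/\SO(p)$, and identify $T_{[Q]}\mathbf{G}_{p,n}(\mathbb{R})$ as the horizontal slice $\{Q^TV=0\}$ complementary to the $\SO(p)$-orbit directions $\{QX:X\in\mathfrak{so}(p)\}$; the collapse $F=V$ then falls out of the Stiefel formula. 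Both approaches are equivalent and you correctly note that your horizontal slice is exactly the image of $\m$ under $T\pi$, so the two pictures match. Your argument is more self-contained and avoids unpacking \eqref{diffcurveinM}; the paper's has the advantage of being a direct instance of the general machinery set up in Section~\ref{examplesmatrices}, so no separate verification of the vertical/horizontal splitting is needed.
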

%\begin{proof}
The proof follows from \eqref{diffcurveinM} assuming $g(t)=[Q(t) Q(t)^{\perp}]\in\SO(n)$, and $h(t)$ of  the form \eqref{isotropyGroupGR}, imposing the stated choice of isotropy, and projecting the resulting curves on $\StfR$ by post-multiplication by $I_p$. %\end{proof}
 %\in (\mathrm{SO}(n-p)\times \mathrm{SO}(p) )$ and the stated choice of isotropy.

We proceed by using (\ref{eq2.1}) but with $F=V$. %\eqref{tangentGrassman} %of $T_Q\mathbf{G}_{p,n}(\mathbb{R})$  %and the framework described in \ref{defn:SRVT} 
%for
%defining a SRVT on the Grassmann manifold. 
%Consider  the map
%\begin{align}
%\label{SRVTGr1}
%f_Q:T_Q\M\rightarrow T_Q\M,&&
%f_Q(V)&=V-Q\,  \mathrm{tril}(Q^{\mathrm{T}}V), % \\ %\quad S=O\, \mathrm{or}\,\mathrm{arbitrary}\, \mathrm{symmetric}\\
%%\label{SRVTGr2}
%%a_Q:T_Q\M\rightarrow \g, &&
%%a_Q(V)&=f_Q(V)Q^{\mathrm{T}}-Qf_Q(V)^{\mathrm{T}}.
%\end{align}
%and d
Define $a_Q:T_Q\M\rightarrow \g$ as in \eqref{SRVT2} with $f_Q:T_Q\M\rightarrow T_Q\M,$ the identity map $f_Q(V)=V$. %using \eqref{SRVTGr1}.
Suppose that $Y(t)$ is a curve on the Grassmann manifold, then the SRVT of $Y$ is a curve on $\mathfrak{so}(n)$  and takes the form \eqref{SRVTStiefel} which here becomes
\begin{equation}
\label{SRVTGrassmann}
\mathcal{R}(Y):=\frac{\dot{Y}Y^{\mathrm{T}}-Y\dot{Y}^{\mathrm{T}} }{\sqrt{\|\dot{Y}Y^{\mathrm{T}}-Y\dot{Y}^{\mathrm{T}}\|}}.
%\frac{a_{Y}(\dot{Y})}{\sqrt{\|a_{Y}(\dot{Y})\|}}.
\end{equation}
%<
%
%defined as in \eqref{SRVTStiefel} but with $f_Q$ given by \eqref{SRVTGr1} and with $a_Q$ as in \eqref{SRVT2}.
%$$\mathcal{R}(Y):=\frac{a_{Y}(\dot{Y})}{\sqrt{\|a_{Y}(\dot{Y})\|}}.$$
The reductive SRVT is defined by \eqref{SRVTStiefelred} with 
$$\tilde{f}_Q(V)=[Q,Q^{\perp}]^TV=\left[ \begin{array}{c}
O\\
(Q^{\perp})^TV
\end{array}\right]$$ 
and $\tilde{a}_Q$ as in \eqref{SRVTm2}, which implies $\tilde{a}_Q(V)\in\mathfrak{m}$.

\section{Numerical experiments}\label{numericalexperiments}
To demonstrate an application of the SRVT introduced in this paper, we present a simple example of interpolation between two curves on the unit $2$-sphere. In the following we describe some implementation details for this example.

\begin{setup}[Preliminaries]
We will use Rodrigues' formula for the Lie group exponential,
\begin{equation*}
\exp(\hat{x}) = I + \frac{\sin{(\alpha)}}{\alpha}\hat{x} + \frac{1- \cos{(\alpha)}}{\alpha^2}\hat{x}^2, \quad \alpha = \lVert x \rVert_2,\quad x = 
 \begin{pmatrix}
  x_1 \\
  x_2 \\
  x_3
 \end{pmatrix}
\mapsto
\hat{x} = 
 \begin{pmatrix}
  0 & -x_3 & x_2 \\
  x_3 & 0 & -x_1 \\
  -x_2 & x_1 & 0
 \end{pmatrix}
\end{equation*}
where $x\mapsto \hat{x}$  defines %the relationship between $x$ and $\hat{x}$ is given by the hat map
%\begin{equation*}
%x = 
% \begin{pmatrix}
%  x_1 \\
%  x_2 \\
%  x_3
% \end{pmatrix}
%\mapsto
%\hat{x} = 
% \begin{pmatrix}
%  0 & -x_3 & x_2 \\
%  x_3 & 0 & -x_1 \\
%  -x_2 & x_1 & 0
% \end{pmatrix}
%\end{equation*}
%defining 
an isomorphism between vectors in $\mathbb{R}^3$ and $3 \times 3$ skew-symmetric matrices in $\mathfrak{so}(3)$. 
\end{setup}

\begin{setup}[Interpolated curves]\label{ssect: Interp}
Given a continuous curve $c(t), t \in [t_0,t_N]$ on the Stiefel manifold $\SO(3)/\SO(2)$, which is diffeomorphic to $\text{S}^2$, we replace $c(t)$ with the curve $\bar{c}(t)$ interpolating between $N+1$ values $\bar{c}_i =c(t_i)$, with $t_0 < t_1 < ... < t_N$, as follows:
%\begin{equation*}
%\bar{c}(t) := \sum_{k=0}^{N-1} \chi_{[\tau_k,\tau_{k+1})}(t) \exp{\left(\frac{t-\tau_k}{\tau_{k+1}-\tau_k}\left(v_k $\bar{c}_k^T- \bar{c}_k v_k^T \right)\right)}\bar{c}_k
%\end{equation*}
\begin{equation}
\bar{c}(t) := \sum_{i=0}^{N-1} \chi_{[t_i,t_{i+1})}(t) \exp{\left(\frac{t-t_i}{t_{i+1}-t_i}\left(v_i \bar{c}_i^\text{T}- \bar{c}_i v_i^\text{T} \right)\right)}\bar{c}_i,
\label{eq:capprox}
\end{equation}
%where $t_0 < t_1 < ... < t_N$, $\bar{c}_i = c(t_i)$, 
where $\chi$ is the characteristic function, $\exp$ is the Lie group exponential, and $v_i$ are approximations to $\left.\frac{d}{d t}c(t)\right|_{t=t_i}$ found by solving the equations
\begin{align}
\label{eqforvi}
\bar{c}_{i+1} = \exp{\left(v_i \bar{c}_i^\text{T}- \bar{c}_i v_i^\text{T}\right)}\bar{c}_i \\
\text{constrained by} \quad 
v_i^\text{T}\bar{c}_i = 0.
\label{eq:constrains}
\end{align}
%\begin{equation}
%v_i^\text{T}\bar{c}_i = 0 \quad \text{ and } \quad \lVert \bar{c}_i\rVert = \bar{c}_i^\text{T}\bar{c}_i = 1.
%\label{eq:constrains}
%\end{equation}
%This is done by utilizing 
The $v_i$, $i=1,\dots, N$, can be found explicitly, by a simple calculation.
We observe that if $\kappa = \bar{c}_i \times v_i$, then $\hat{\kappa} = v_i \bar{c}_i^\text{T}- \bar{c}_i v_i^\text{T}$. By (\ref{eq:constrains}), we have that $\lVert \bar{c}_i \times v_i \rVert_2 = \lVert \bar{c}_i \rVert_2 \lVert v_i \rVert_2 = \lVert v_i \rVert_2$, where the last equality follows because we assume the sphere to have radius $1$, and so
$\lVert \bar{c}_i\rVert_2 = \bar{c}_i^\text{T}\bar{c}_i = 1$. Using Rodrigues' formula, from \eqref{eqforvi} we obtain % and therefore
%{\small
%\begin{align*}
$$
\bar{c}_{i+1} = 
%\bar{c}_i + \frac{\sin{(\lVert v_i \rVert_2)}}{\lVert v_i \rVert_2}\left(v_i \bar{c}_i^\text{T}- \bar{c}_i v_i^\text{T}\right)\bar{c}_i + \frac{1- \cos{(\lVert v_i \rVert_2)}}{\lVert v_i \rVert_2^2}\left(v_i \bar{c}_i^\text{T}- \bar{c}_i v_i^\text{T}\right)^2 \bar{c}_i= 
\frac{\sin{(\lVert v_i \rVert_2)}}{\lVert v_i \rVert_2}v_i + \cos{\left(\lVert v_i \rVert_2\right)} \bar{c}_i.
$$
%\end{align*}
%}
Thus $\bar{c}_i^\text{T}\bar{c}_{i+1} = 1- \cos{\left(\lVert v_i \rVert_2\right)}$ and  so $\lVert v_i \rVert_2 = \arccos{\left(\bar{c}_i^\text{T}\bar{c}_{i+1}\right)}$ leading to
%\begin{align*}
%\bar{c}_i^\text{T}\bar{c}_{i+1} = 1- \cos{\left(\lVert v_i \rVert_2\right)} \quad \Rightarrow \quad \lVert v_i \rVert_2 = \arccos{\left(\bar{c}_i^\text{T}\bar{c}_{i+1}\right)}
%\end{align*}
%and
\begin{align*}
v_i = \left(\bar{c}_{i+1} - \bar{c}_i^\text{T}\bar{c}_{i+1}\bar{c}_i\right) \frac{\arccos{\left(\bar{c}_i^\text{T}\bar{c}_{i+1}\right)}}{\sqrt{1-\left(\bar{c}_i^\text{T}\bar{c}_{i+1}\right)^2}}.
\end{align*}
Inserting this into (\ref{eq:capprox}) gives
%\begin{equation*}
%\bar{c}(t) = \sum_{i=0}^{N-1} \chi_{[t_i,t_{i+1})}(t) \exp{\left(\frac{(t-t_i)\arccos{\left(\bar{c}_i^\text{T}\bar{c}_{i+1}\right)}}{(t_{i+1}-t_i)\sqrt{1-\left(\bar{c}_i^\text{T}\bar{c}_{i+1}\right)^2}}\left(\bar{c}_{i+1} \bar{c}_i^\text{T} - \bar{c}_i \bar{c}_{i+1}^\text{T} \right)\right)}\bar{c}_i
%\end{equation*}
\begin{equation}
\bar{c}(t) = \sum_{i=0}^{N-1} \chi_{[t_i,t_{i+1})}(t) \exp{\left(\frac{t-t_i}{t_{i+1}-t_i} \frac{\arccos{\left(\bar{c}_i^\text{T}\bar{c}_{i+1}\right)}}{\sqrt{1-\left(\bar{c}_i^\text{T}\bar{c}_{i+1}\right)^2}} \left(\bar{c}_{i+1} \bar{c}_i^\text{T} - \bar{c}_i \bar{c}_{i+1}^\text{T} \right)\right)}\bar{c}_i.
\label{eq:capprox2}
\end{equation}
%The SRVT  of
\end{setup}
\begin{setup}[The SRVT and its inverse]\label{ssect: SRVT}
By Definition~\ref{defn:SRVT} and formulae \eqref{SRVT1}, \eqref{SRVT2} and \eqref{SRVTStiefel}, the SRVT  of the curve \eqref{eq:capprox2} is a piecewise constant function $\bar{q}(t)$ in $\mathfrak{so}(3)$, 
%the Lie Algebra, 
taking values $\bar{q}_i = \bar{q}(t_i), \,i = 0,...,N-1$, where $\bar{q}_i =\left.\mathcal{R}(\bar{c})\right|_{t=t_i}$ is given by
\begin{align}
\bar{q}_i=\frac{v_i \bar{c}_i^\text{T}- \bar{c}_i v_i^\text{T}}{\lVert v_i \bar{c}_i^\text{T}- \bar{c}_i v_i^\text{T} \rVert^\frac{1}{2}}=\frac{\arccos^\frac{1}{2}{\left(\bar{c}_i^\text{T}\bar{c}_{i+1}\right)}}{\left(1-\left(\bar{c}_i^\text{T}\bar{c}_{i+1}\right)^2\right)^\frac{1}{4}\lVert \bar{c}_{i+1} \bar{c}_i^\text{T} - \bar{c}_i \bar{c}_{i+1}^\text{T}  \rVert^\frac{1}{2}} \left(\bar{c}_{i+1} \bar{c}_i^\text{T} - \bar{c}_i \bar{c}_{i+1}^\text{T} \right).
%&= \frac{\arccos^\frac{1}{2}{\left(\bar{c}_i^\text{T}\bar{c}_{i+1}\right)}}{\left(1-\left(\bar{c}_i^\text{T}\bar{c}_{i+1}\right)^2\right)^\frac{1}{4}\lVert \bar{c}_{i+1} \bar{c}_i^\text{T} - \bar{c}_i \bar{c}_{i+1}^\text{T}  \rVert^\frac{1}{2}} \left(\bar{c}_{i+1} \bar{c}_i^\text{T} - \bar{c}_i \bar{c}_{i+1}^\text{T} \right).
\label{eq:numSRVT}
\end{align}
%\begin{align*}
%\bar{q}_i = \mathcal{R}(\bar{c}_i) = \frac{\arccos^\frac{1}{2}{\left(\bar{c}_i^\text{T}\bar{c}_{i+1}\right)}}{\left(1-\left(\bar{c}_i^\text{T}\bar{c}_{i+1}\right)^2\right)^\frac{1}{4}\lVert \bar{c}_{i+1} \bar{c}_i^\text{T} - \bar{c}_i \bar{c}_{i+1}^\text{T}  \rVert^\frac{1}{2}} \left(\bar{c}_{i+1} \bar{c}_i^\text{T} - \bar{c}_i \bar{c}_{i+1}^\text{T} \right)
%\end{align*}
Here the norm $\lVert \cdot \rVert$ is induced by the negative (scaled) Killing form, which for skew-symmetric matrices corresponds to the Frobenius inner product, $\lVert A \rVert = \sqrt{\text{tr}(A A^\text{T})}$.

The inverse SRVT is then given by (\ref{eq:capprox2}), with  %the discrete points found by
\begin{equation*}
\bar{c}_{i+1} = \exp{\left(\lVert\bar{q}_i\rVert\bar{q}_i\right)}\bar{c}_i, \quad i = 1,...,N-1, \quad \bar{c}_0 = c(t_0).
\end{equation*}
\end{setup}
\begin{setup}[The reductive SRVT]\label{ssect: redSRVT}
Since $\Evol(a_{\bar{c}_i}(v_i)) = \exp{(a_{\bar{c}_i}(v_i))}$, the reductive SRVT (\ref{setup: twist}) becomes then
\begin{align}
\left.\mathcal{R}_{\mathfrak{m}}(\bar{c})\right|_{t=t_i} = \left.\mathcal{R}(\tilde{c})\right|_{t=t_i}= \frac{\arccos^\frac{1}{2}{\left(\tilde{c}_i^\text{T}\tilde{c}_{i+1}\right)}}{\left(1-\left(\tilde{c}_i^\text{T}\tilde{c}_{i+1}\right)^2\right)^\frac{1}{4}\lVert \tilde{c}_{i+1} \tilde{c}_i^\text{T} - \tilde{c}_i \tilde{c}_{i+1}^\text{T}  \rVert^\frac{1}{2}} \left(\tilde{c}_{i+1} \tilde{c}_i^\text{T} - \tilde{c}_i \tilde{c}_{i+1}^\text{T} \right),
\label{eq:twistedSRVT}
\end{align}
with
\begin{align*}
\tilde{c}_i = [U,U^\perp]^\text{T}_i \bar{c}, \quad i=0,...,N, \qquad
[U,U^\perp]_{i+1} = \exp{(a_{\bar{c}_i}(v_i))}[U,U^\perp]_{i} \quad  i=0,...,N-1,
\end{align*}
where $[U,U^\perp]_0$ can be found e.g. by $QR$-factorization of $c(t_0)$.
\end{setup}\textbf{•}
%\subsection{Curve blending on the 2-sphere}
\begin{setup}[Curve blending on the 2-sphere]
%The blue curve shows the deformation of the green curve into the purple one along a geodesic in the manifold of curves on the sphere,
We wish to compute the geodesic in the shape space of curves on the sphere between the two curves $\bar{c}^1(t)$ and $\bar{c}^2(t)$. Following \cite{celledoni15sao}, we use a dynamic programming algorithm to solve the optimization problem (\ref{Eq:DistanceShape}) (see \cite{sebastian03, bauer2015} for a detailed description on the use of dynamic programming for shapes):

\begin{algorithm}
\caption{REPARAMETRISATION\cite[Section 3.2]{bauer2015}}\label{alg:reparam}
\begin{algorithmic}
\State Given $\bar{q}^1(t), \bar{q}^2(t), N, \left\lbrace t_i\right\rbrace_{i=0}^N$
\For {$i, j \in \{0,\dots,N \}$}
    \State $c_\text{min} = \infty$
    \For {$k \in \{0,...,i-1\}, l \in \{0,...,j-1\}$}
    		\State {$c_\text{loc} = \int_{t_i}^{t_k} \lvert \bar{q}^1(t) - \bar{q}^2(t_l+\frac{t_j-t_l}{t_i-t_k}t)\rvert^2 \di t$}
    		\If {$\Psi^m(k,l) = \Psi \circ \cdots \circ \Psi (k,l) = (0,0)$ for some $m \geq 0$}
    			\State $z = 0$
    		\Else
    			\State $z = \infty$
    		\EndIf
    		\State $c = c_\text{loc} + A_{k,l} + z$
    		\If {$c < c_\text{min}$}
    			\State $c_\text{min} = c$
    			\State $\Psi(i,j) = (k,l)$
    		\EndIf
    	\EndFor
    	\State $A_{i,j} = c_\text{min}$
\EndFor
\State Create two vectors of indices $(p,q)$ by setting $(p_0,q_0) = (N,N)$ and
\State $(p_{m+1},q_{m+1}) = \Psi(p_m,q_m)$ until $(p_{m+1},q_{m+1}) = (0,0)$
\State Flip $(p,q)$ so it starts at $(0,0)$ and ends at $(N,N)$
\For {$i \in \{0,...,N \}$}
	\State $s_i = t_{q_j} + (t_{q_{j+1}}-t_{q_j}) \frac{t_i -t_{p_j}}{t_{p_{j+1}}-t_{p_j}} \text{ for }\, j$ s.t. $p_j \leq i < p_{j+1}$
\EndFor
\State Return $s = \left\lbrace s_i\right\rbrace_{i=0}^N$
\end{algorithmic}
\end{algorithm}

%We can use dynamic programming as we have done here, since the $L^2$ norm
With this approach, %we find first the discrete points $\bar{c}^1_i = \bar{c}^1(i/N)$ by (\ref{eq:capprox2}), and then 
we reparametrise optimally the curve $\bar{c}^2(t)$ while minimizing its distance to $\bar{c}^1(t)$. This distance is measured by taking the $L^2$ norm of $\bar{q}^1(t) - \bar{q}^2(t)$ in the Lie algebra. In the discrete case, this reparametrisation yields an optimal set of grid points $\left\{ s_i \right\}_{i=0}^N$, where $s_0 = t_0 < s_1 < ... < s_N = t_N$, from which we find $\bar{c}'^2_i = \bar{c}^2(s_i)$ by (\ref{eq:capprox2}). See \cite{celledoni15sao} for further details.

We interpolate between $\bar{c}^1(t)$ and $\bar{c}'^2(t)$ by performing a linear convex combination of their SRV transforms $\bar{q}^1(t)$ and $\bar{q}'^2(t)$, and then by taking the inverse SRVT of the result. We obtain
\begin{equation*}
\bar{c}_\text{int}(\bar{c}_1,\bar{c}'_2,\theta) = \mathcal{R}^{-1}\left(\left(1-\theta\right)\mathcal{R}(\bar{c}_1) + \theta \,\mathcal{R}(\bar{c}'_2)\right), \qquad \theta \in [0,1].
\end{equation*}

Examples are reported in Figures \ref{fig:curves1}, \ref{fig:curves2} and \ref{fig:curves3}, where interpolation between two curves is performed with and without reparametrisation. We show curves resulting from using both (\ref{eq:numSRVT}) and (\ref{eq:twistedSRVT}), and compare these to the results obtained when employing the SRVT introduced in \cite{celledoni15sao} on curves in $\text{SO}(3)$ which are then traced out by a vector in $\mathbb{R}^3$ to match the curves in $\text{S}^2$ studied here.

\end{setup}
\begin{setup}[Conclusions]
We have proposed generalisations of the SRVT approach to curves and shapes evolving on homogenous manifolds using Lie group actions. Different Lie group actions lead to different Riemannian metrics in the infinite dimensional manifolds of curves and shapes opening up for a variety of possibilities which can all be implemented in the same generalised SRVT framework. We have presented only a few preliminary examples here, and further tests and analysis will be the subject of future work.
A number of open questions related to the properties of the pullback metrics through the SRVT, to the performance of the algorithms when using different Lie group actions, to the comparison of the SRVT and the reductive SRVT and to the implementation of the approach in examples of non reductive homogeneous manifolds will be addressed in future research. 
\end{setup}

\section{Acknowledgment} 
\vskip-0.3cm
This work was supported by the Norwegian Research Council, and by the European Union's Horizon 2020 research and innovation programme under the Marie Sklodowska-Curie, grant agreement No.
691070.

%We have proposed generalisations of the SRVT approach to curves and shapes evolving on homogenous manifolds using Lie group actions. Different Lie group actions lead to different Riemannian metrics in the infinite dimensional manifolds of curves and shapes opening up for a variety of possibilities which can all be implemented in the same generalised SRVT framework. We have presented only a few preliminary examples here, and further tests and analysis will be the subject of future work.
%A number of open questions related to the properties of the pullback metrics through the SRVT, to the performance of the algorithms when using different Lie group actions, to the comparison of the SRVT and the reductive SRVT and to the implementation of the approach in examples of non reductive homogeneous manifolds will be addressed in future research. 

\begin{figure}[htbp]
\begin{center}
\subfloat[From left to right: Two curves on the sphere, their original parametrisations, the reparametrisation minimizing the distance in $\text{SO}(3)$ and the reparametrisation minimizing the distance in $S^\text{2}$, using the reductive SRVT (\ref{eq:twistedSRVT}).]{
\includegraphics[width=0.24\textwidth]{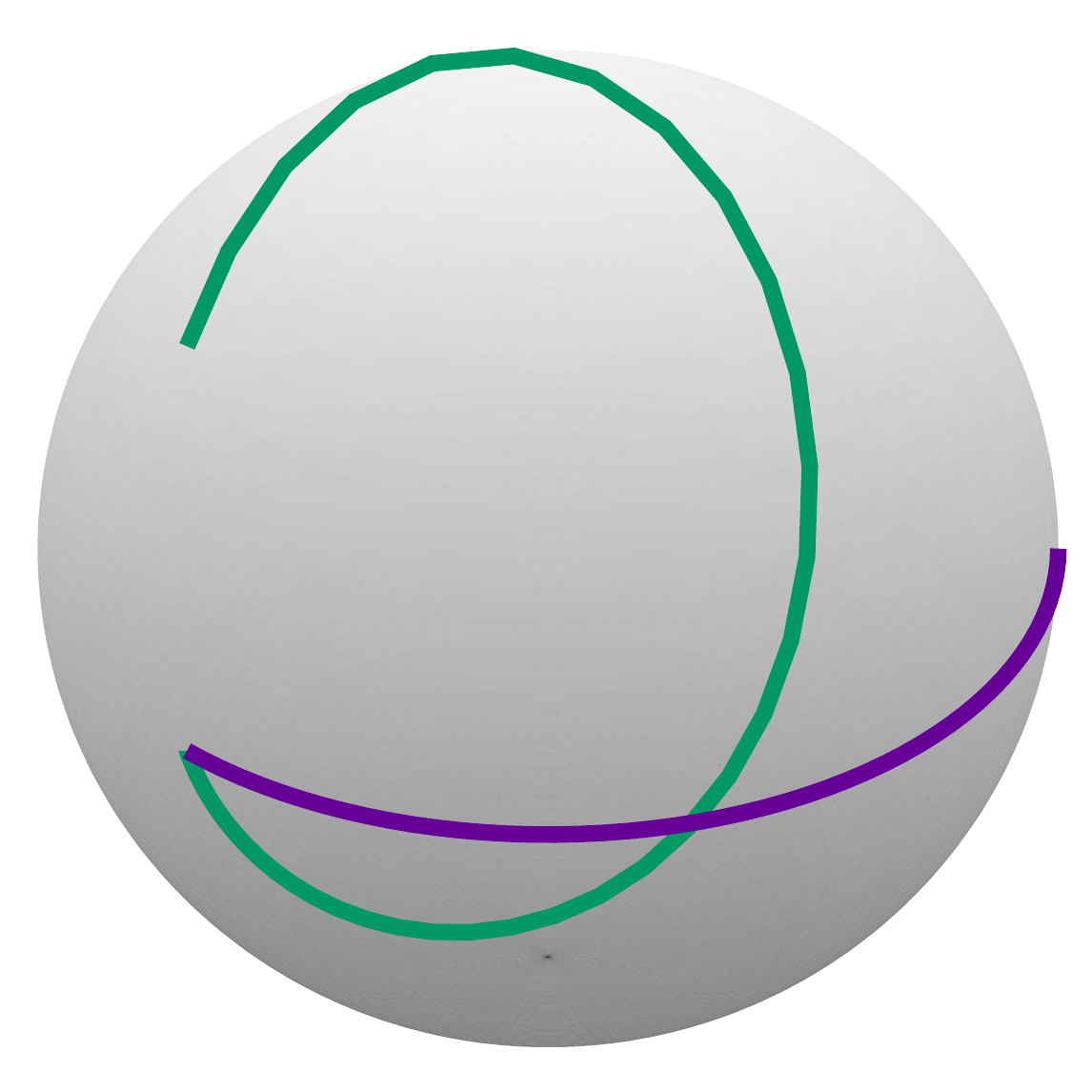}
\includegraphics[width=0.24\textwidth]{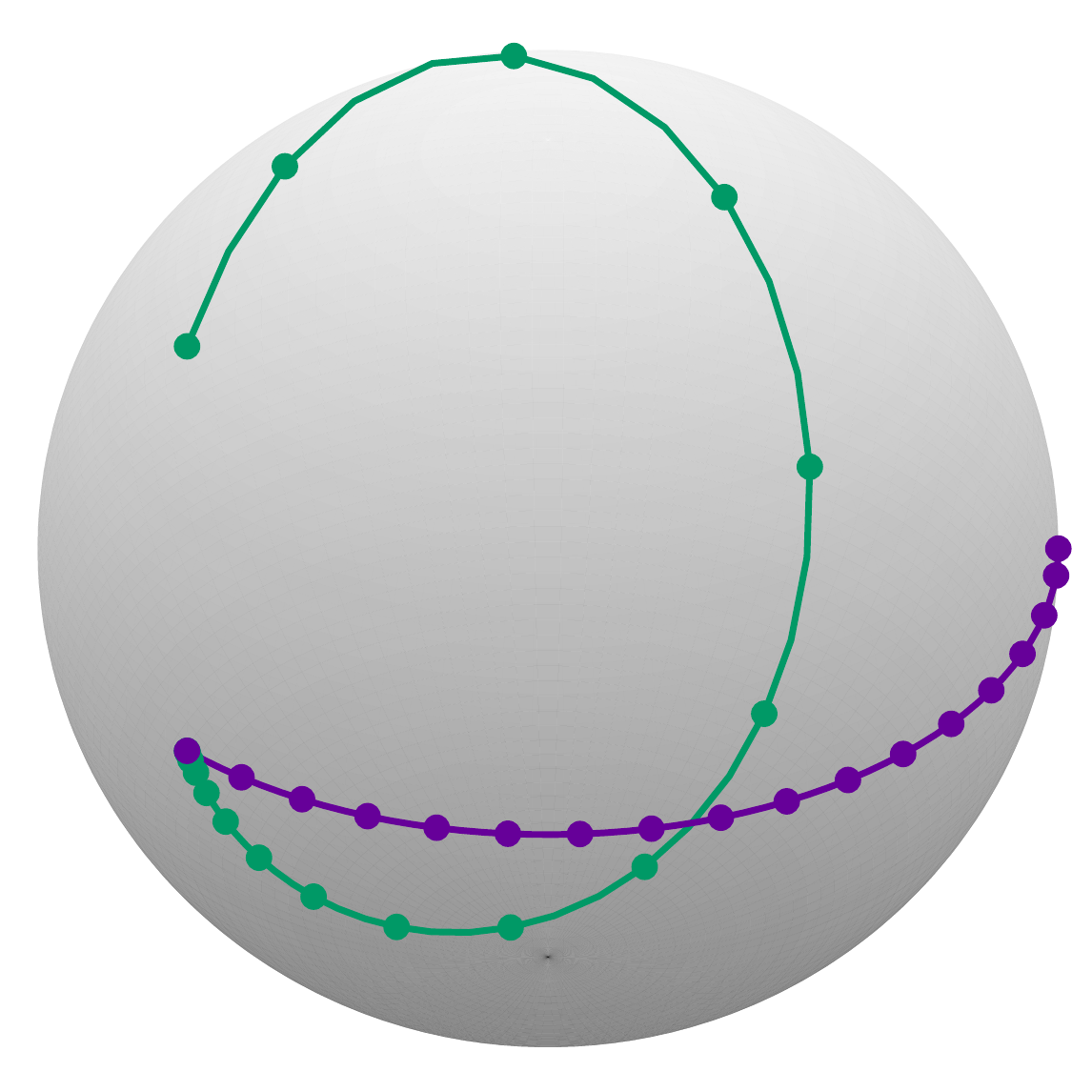}
\includegraphics[width=0.24\textwidth]{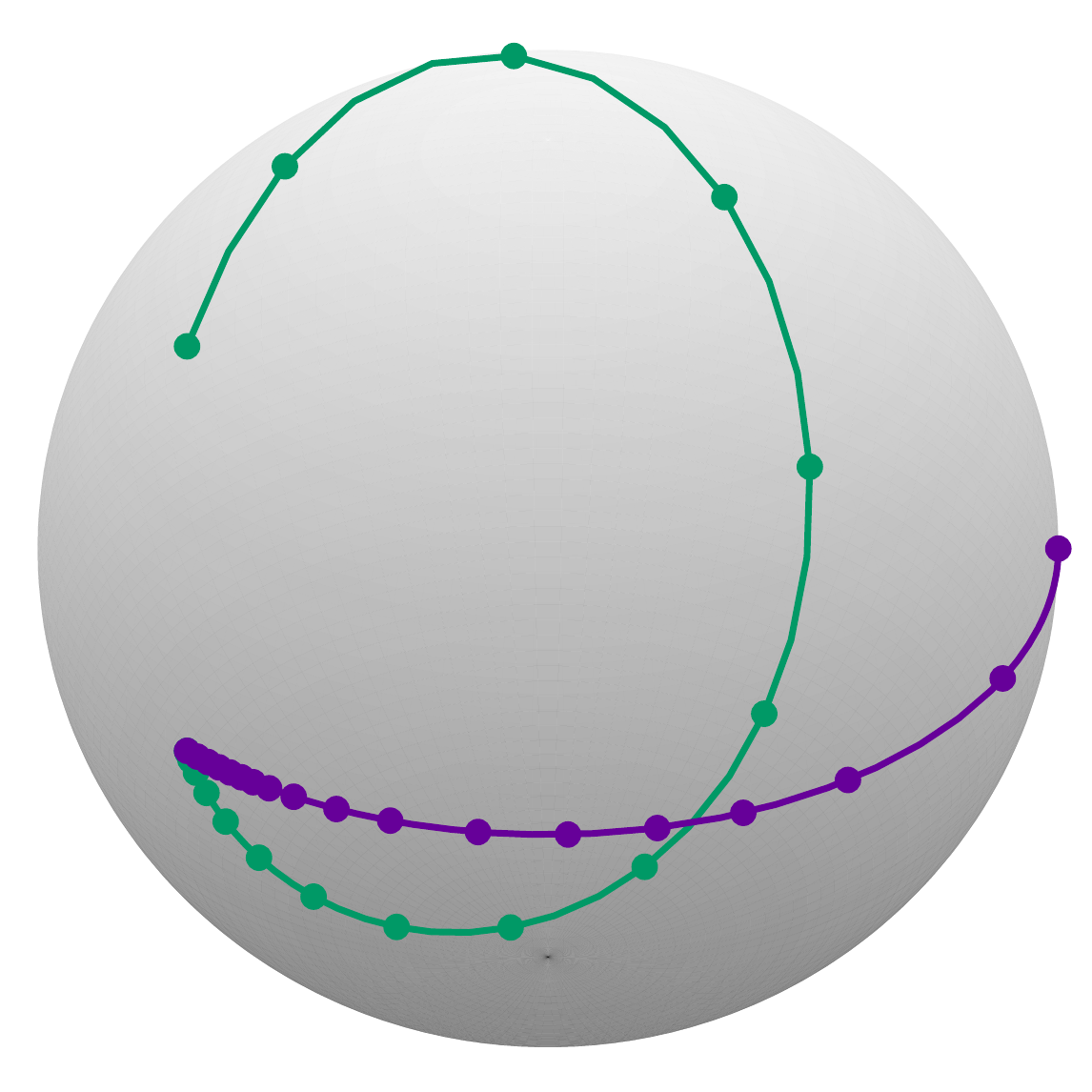}
\includegraphics[width=0.24\textwidth]{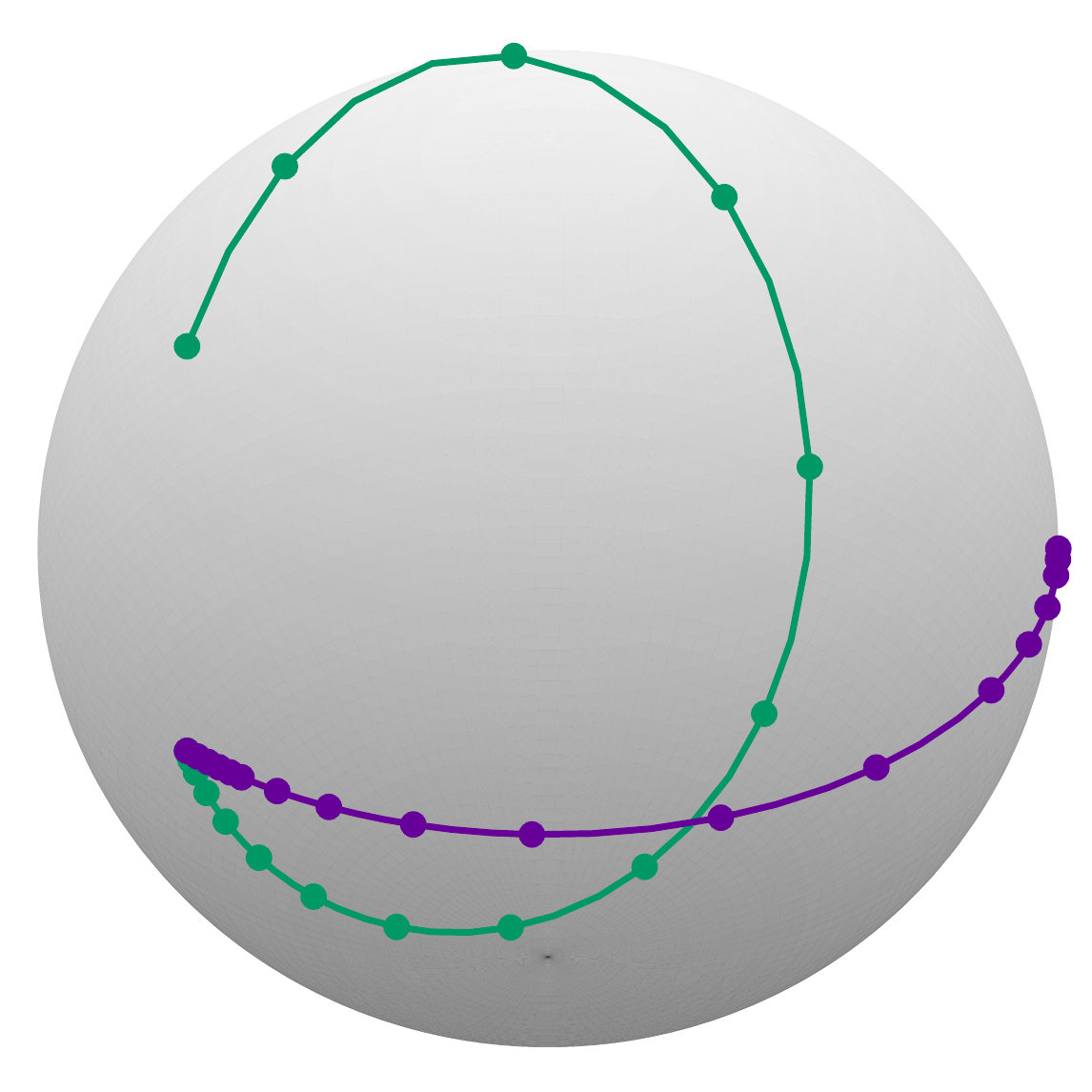}}\\
\subfloat[The interpolated curves at times $t = \left\{\frac{1}{4},\frac{1}{2},\frac{3}{4}\right\}$, from left to right, before reparametrisation, on $\text{SO}(3)$ (yellow, dashed line) and $\text{S}^2$ (blue, solid line).]{
\includegraphics[width=0.33\textwidth]{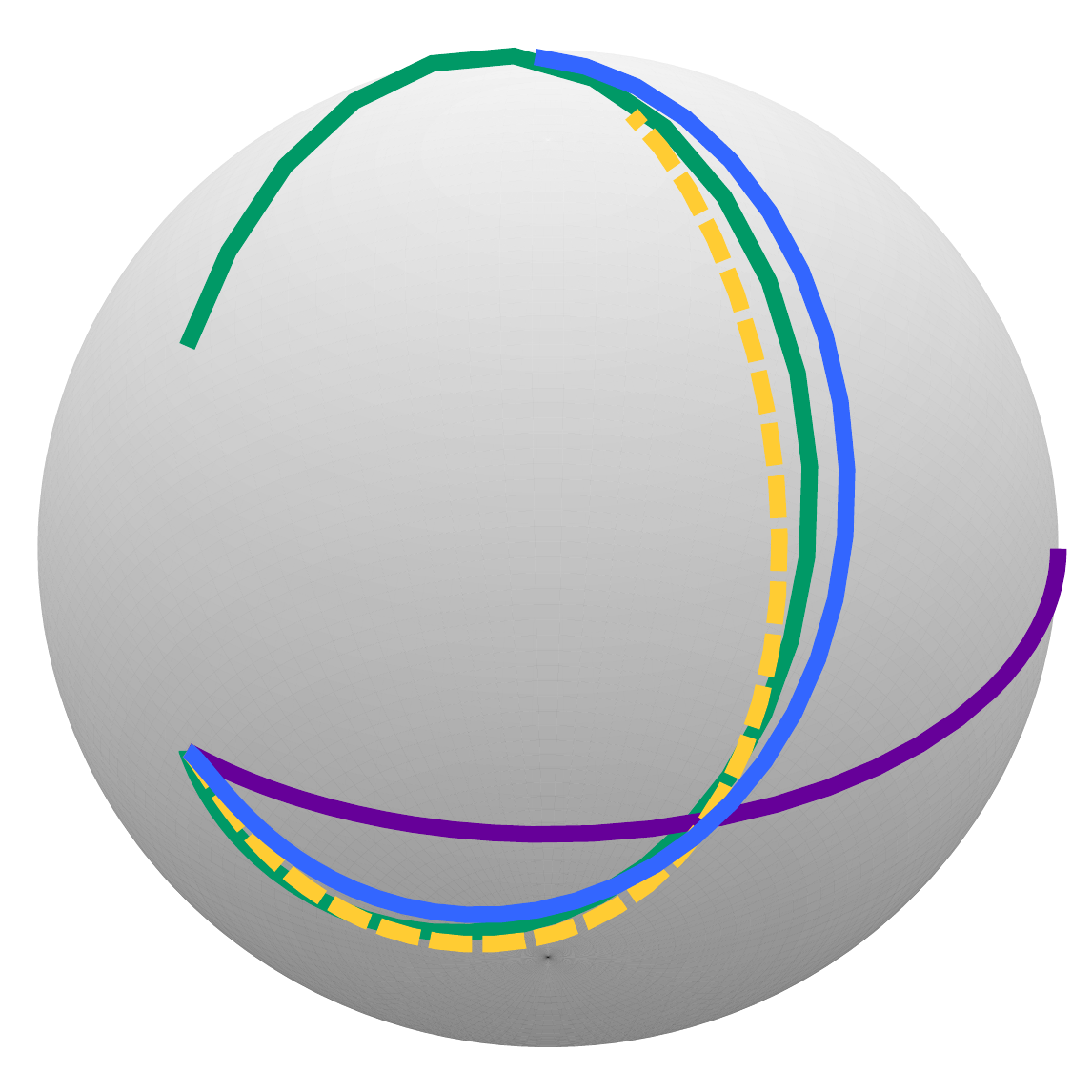}
\includegraphics[width=0.33\textwidth]{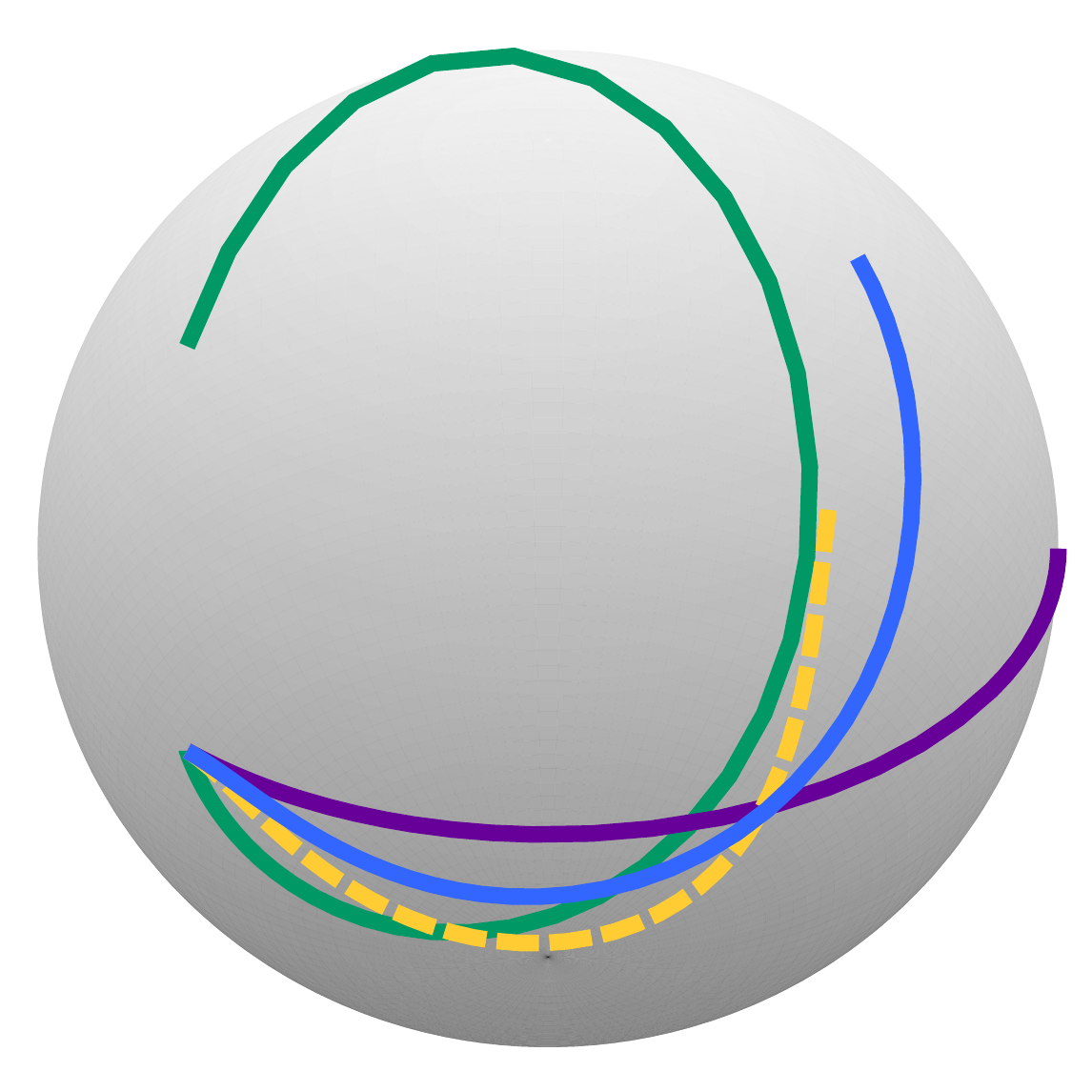}
\includegraphics[width=0.33\textwidth]{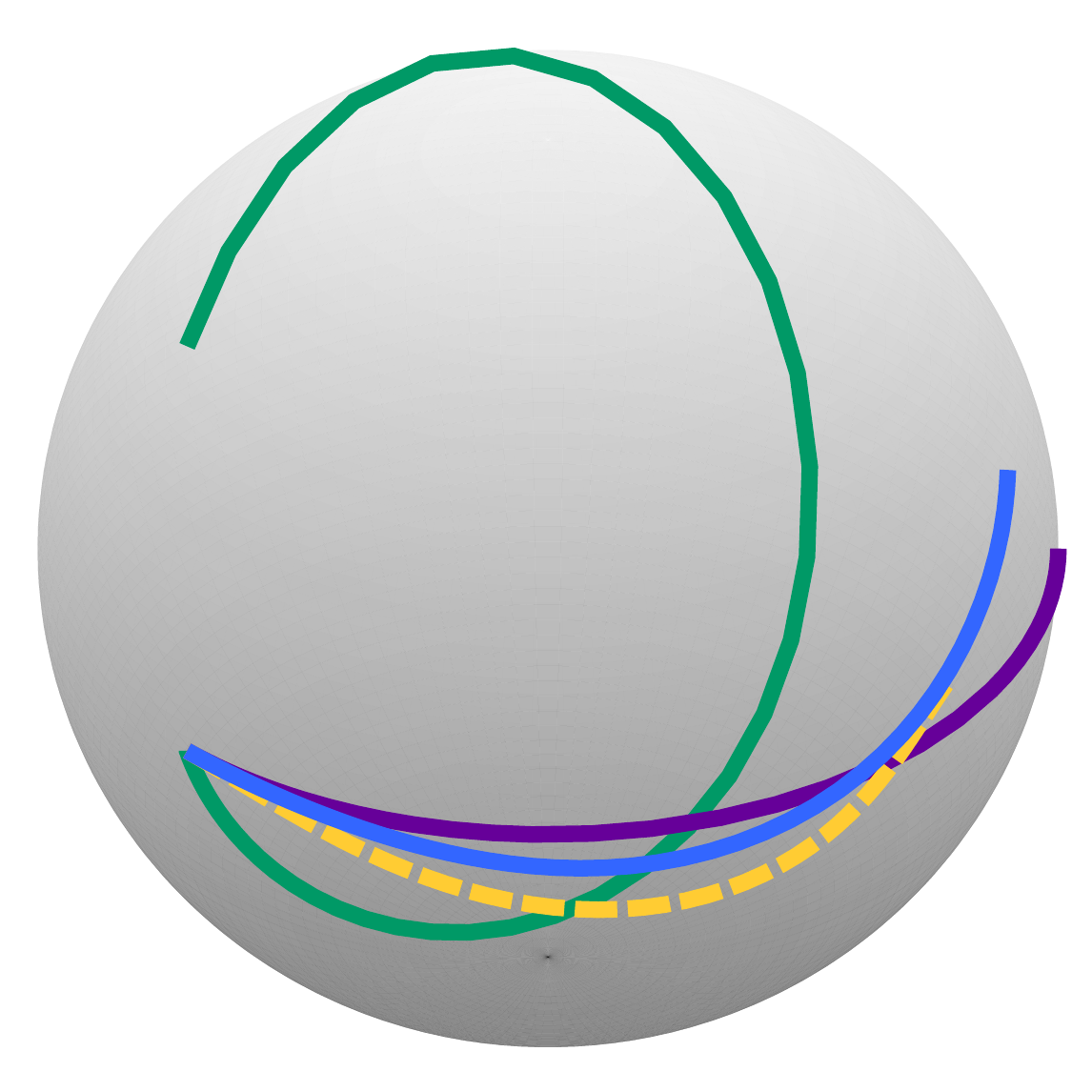}}\\
\subfloat[The interpolated curves at times $t = \left\{\frac{1}{4},\frac{1}{2},\frac{3}{4}\right\}$, from left to right, after reparametrisation, on $\text{SO}(3)$ (yellow, dashed line) and $\text{S}^2$ (blue, solid line).]{
\includegraphics[width=0.33\textwidth]{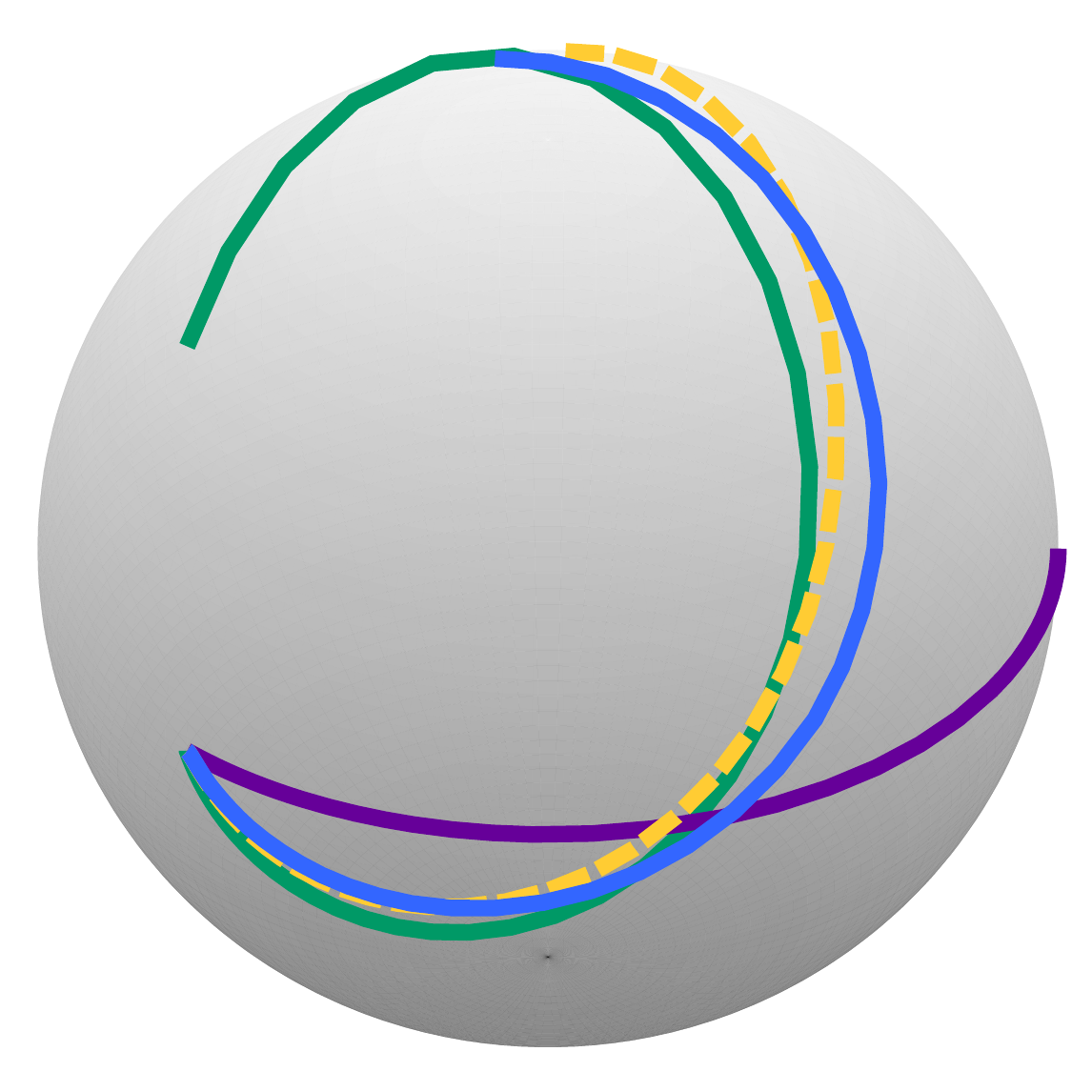}
\includegraphics[width=0.33\textwidth]{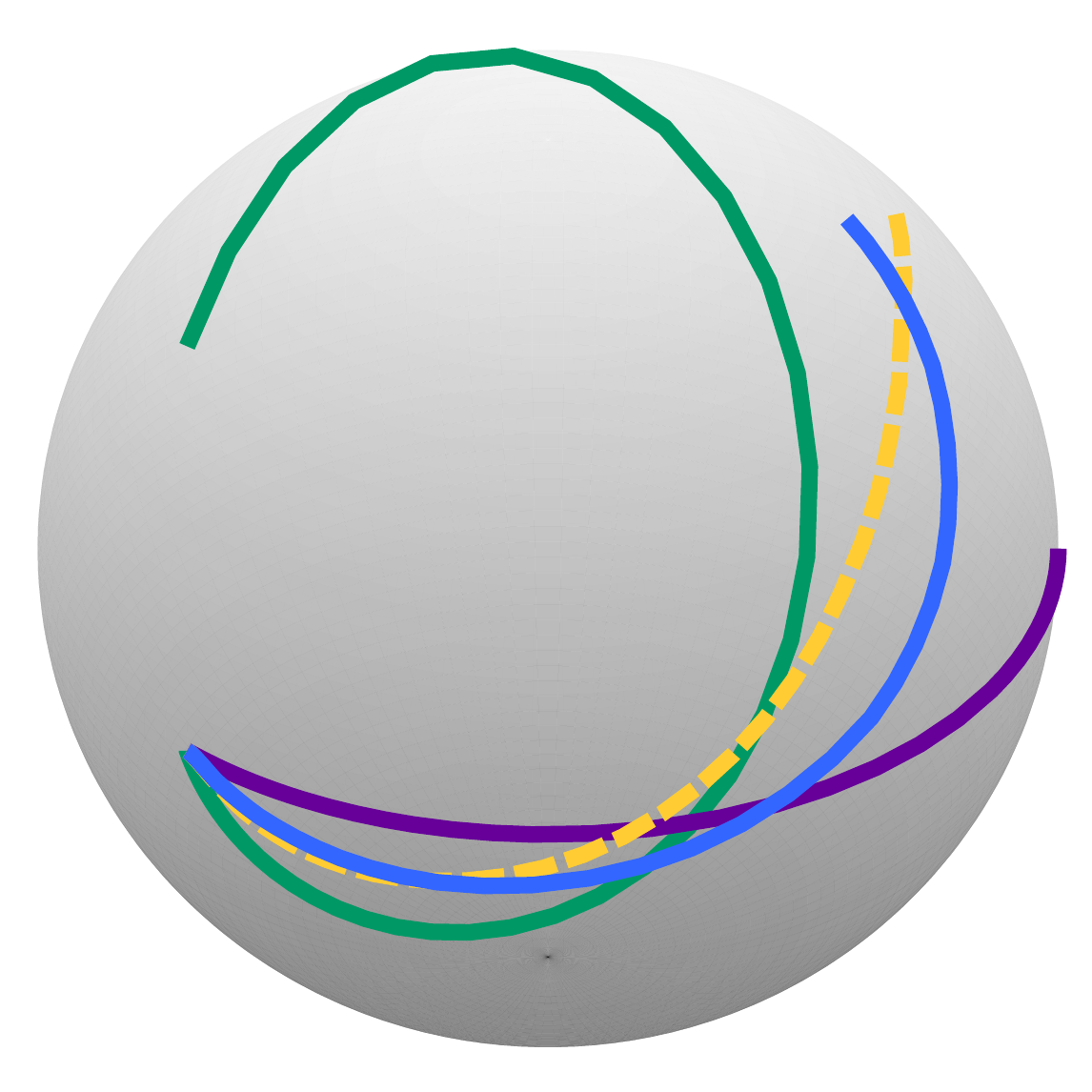}
\includegraphics[width=0.33\textwidth]{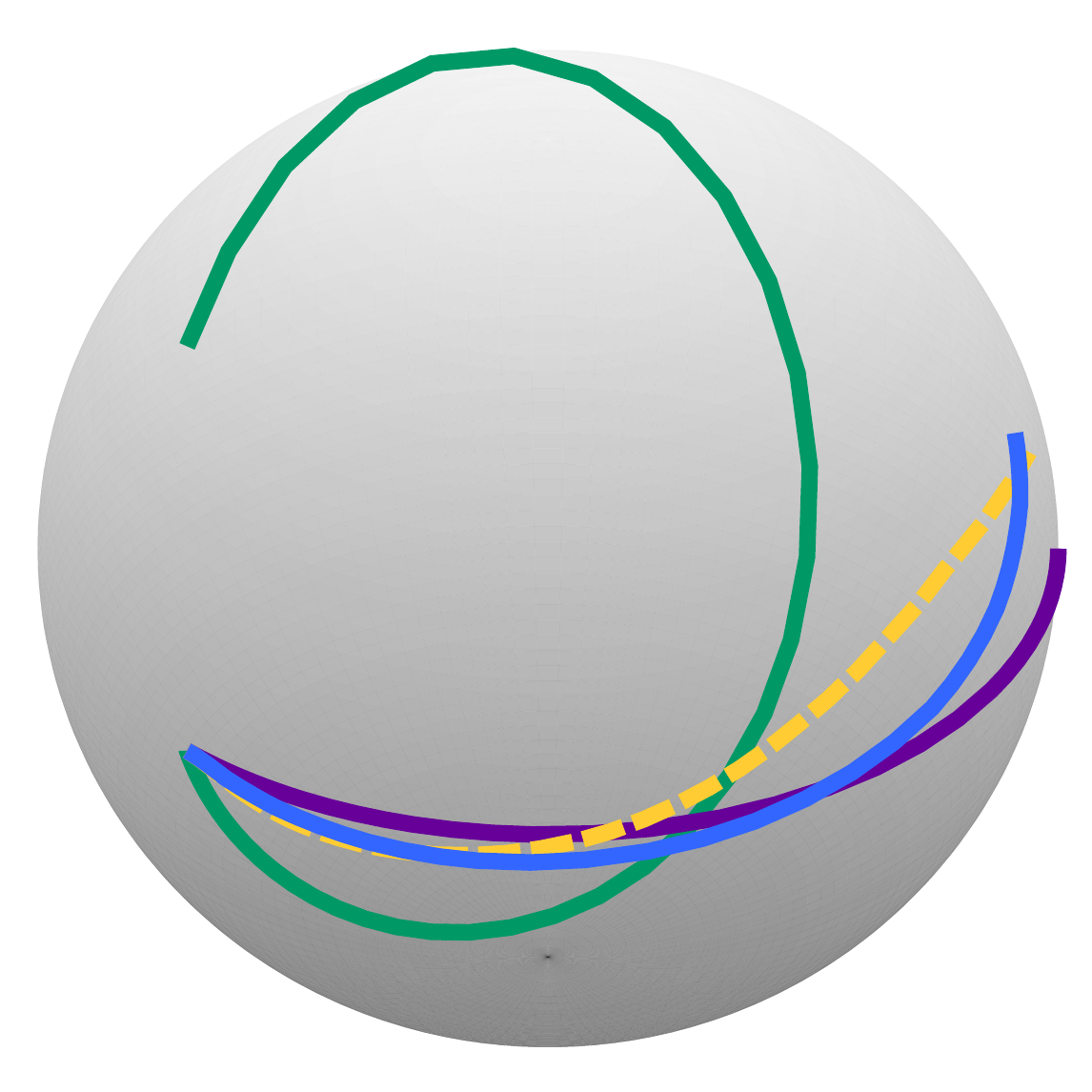}}
\caption{Interpolation between two curves on $\text{S}^2$, with and without reparametrisation, obtained by the reductive SRVT (\ref{eq:twistedSRVT}). The results obtained by using the SRVT (\ref{eq:numSRVT}) are not identical to these, but in this case very similar, and therefore omitted. The results are compared to the corresponding SRVT interpolation between curves on $\text{SO}(3)$, which are then mapped to $\text{S}^2$ by multiplying with the vector $(1,0,0)^\text{T}$. The curves are $c^1(t) =  R_x(\pi t^3) R_y(\pi t^3) R_y(\pi t^3/2)  \cdot (1,0,0)^\text{T}$ and $c^2(t) =  R_z(3 \pi t/4) R_x(\pi t) \cdot (1,0,0)^\text{T}$ for $t \in [0,1]$, where $R_x(t)$, $R_y(t)$ and $R_z(t)$ are the rotation matrices in $\text{SO}(3)$ corresponding to rotation of an angle $t$ around the $x$-, $y$- and $z$-axis, respectively. 
}
\label{fig:curves1}
\end{center}
\end{figure}

\begin{figure}[htbp]
\begin{center}
\subfloat[From left to right: Two curves on the sphere, their original parametrisations and the reparametrisation minimizing the distance in $S^\text{2}$, using the reductive SRVT (\ref{eq:twistedSRVT}).]{
\includegraphics[width=0.33\textwidth]{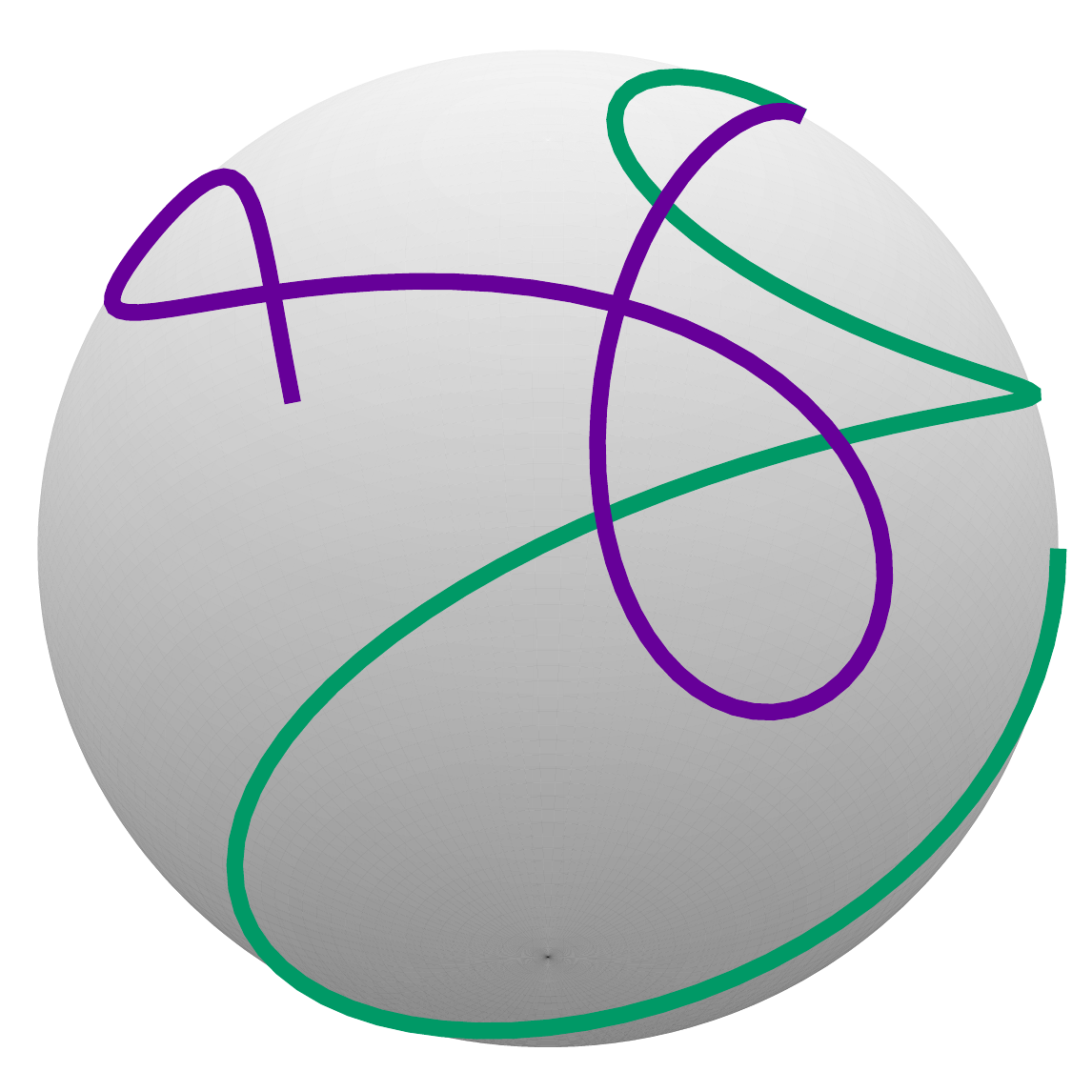}
\includegraphics[width=0.33\textwidth]{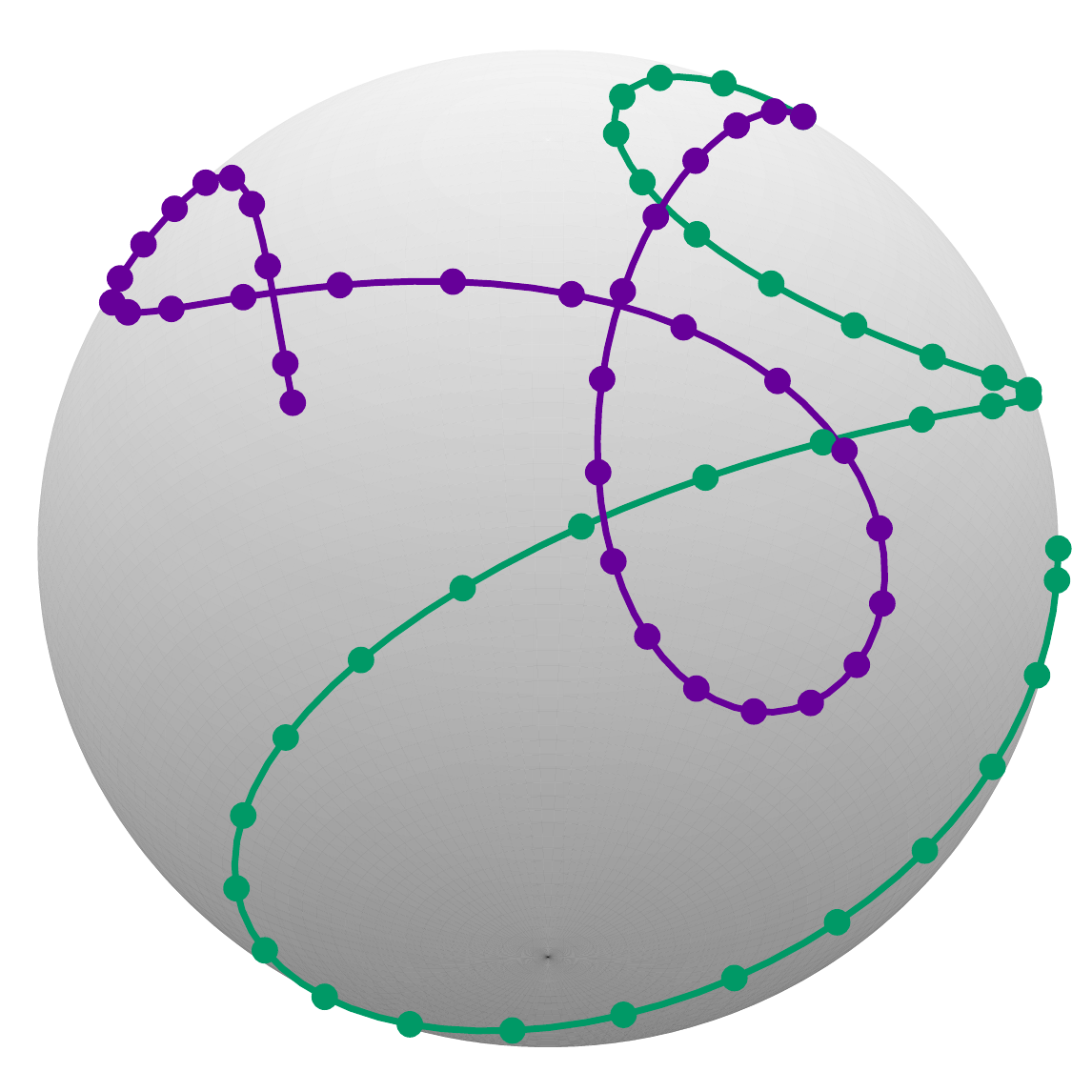}
\includegraphics[width=0.33\textwidth]{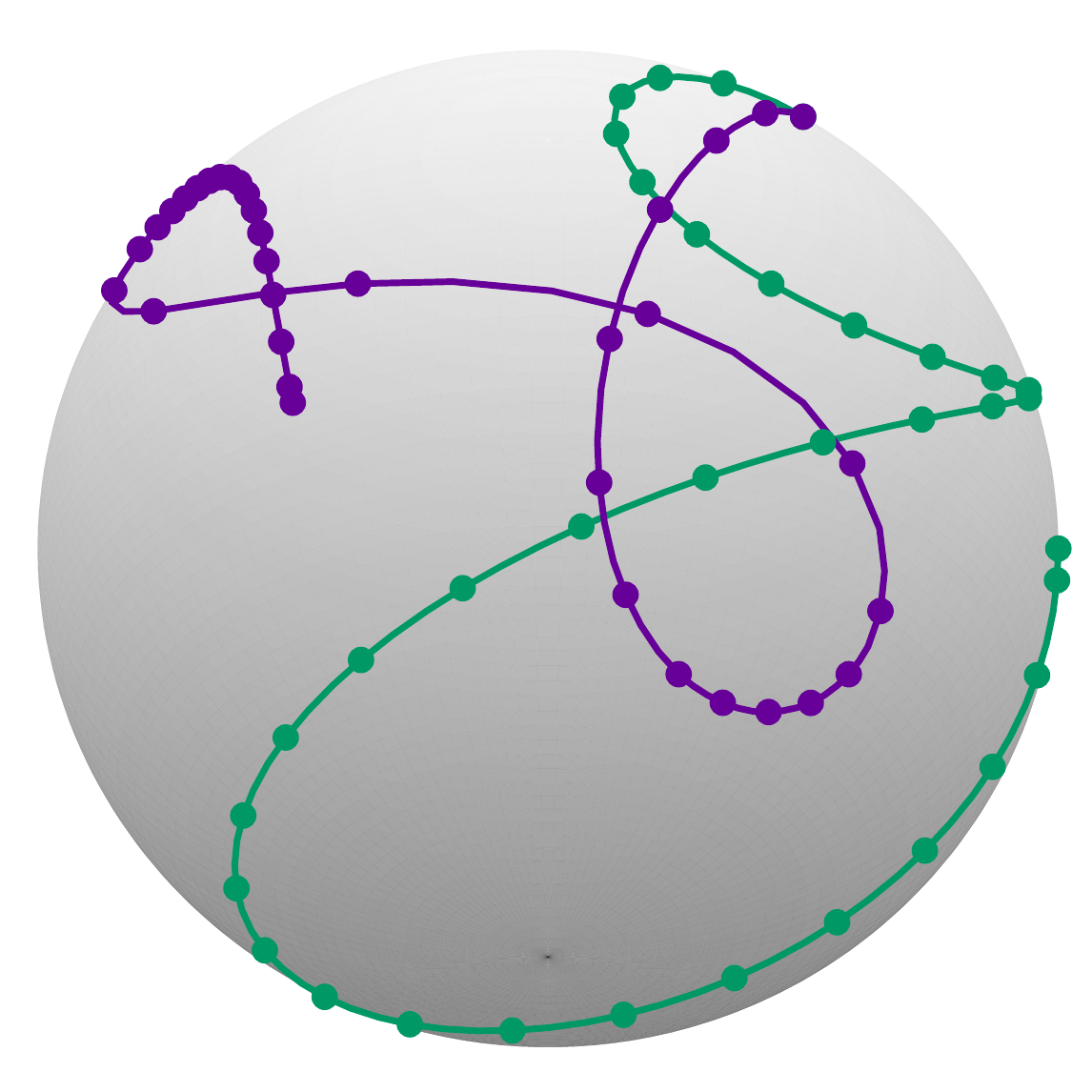}}\\
\subfloat[The interpolated curves at times $t = \left\{\frac{1}{4},\frac{1}{2},\frac{3}{4}\right\}$, from left to right, before reparametrisation.]{
\includegraphics[width=0.33\textwidth]{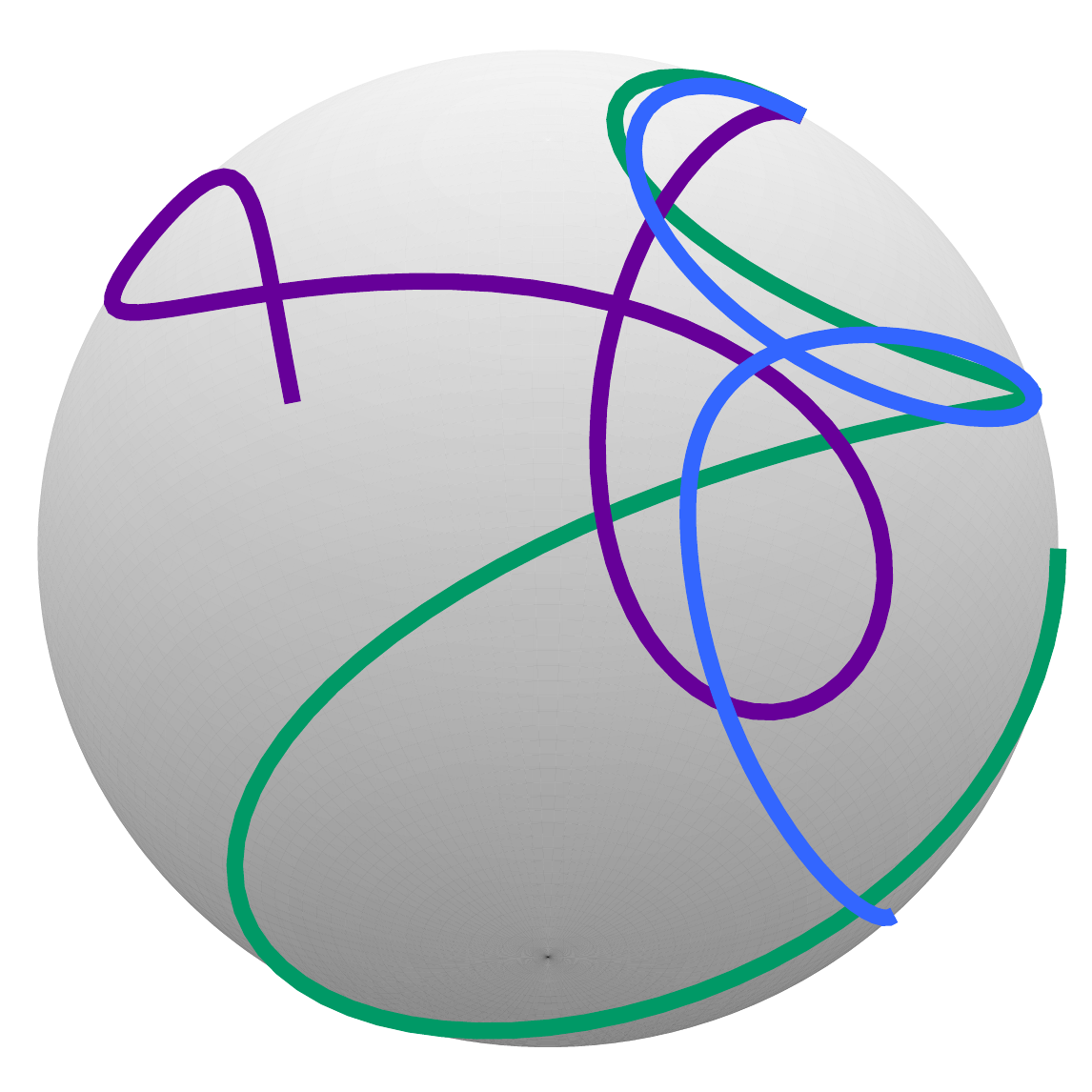}
\includegraphics[width=0.33\textwidth]{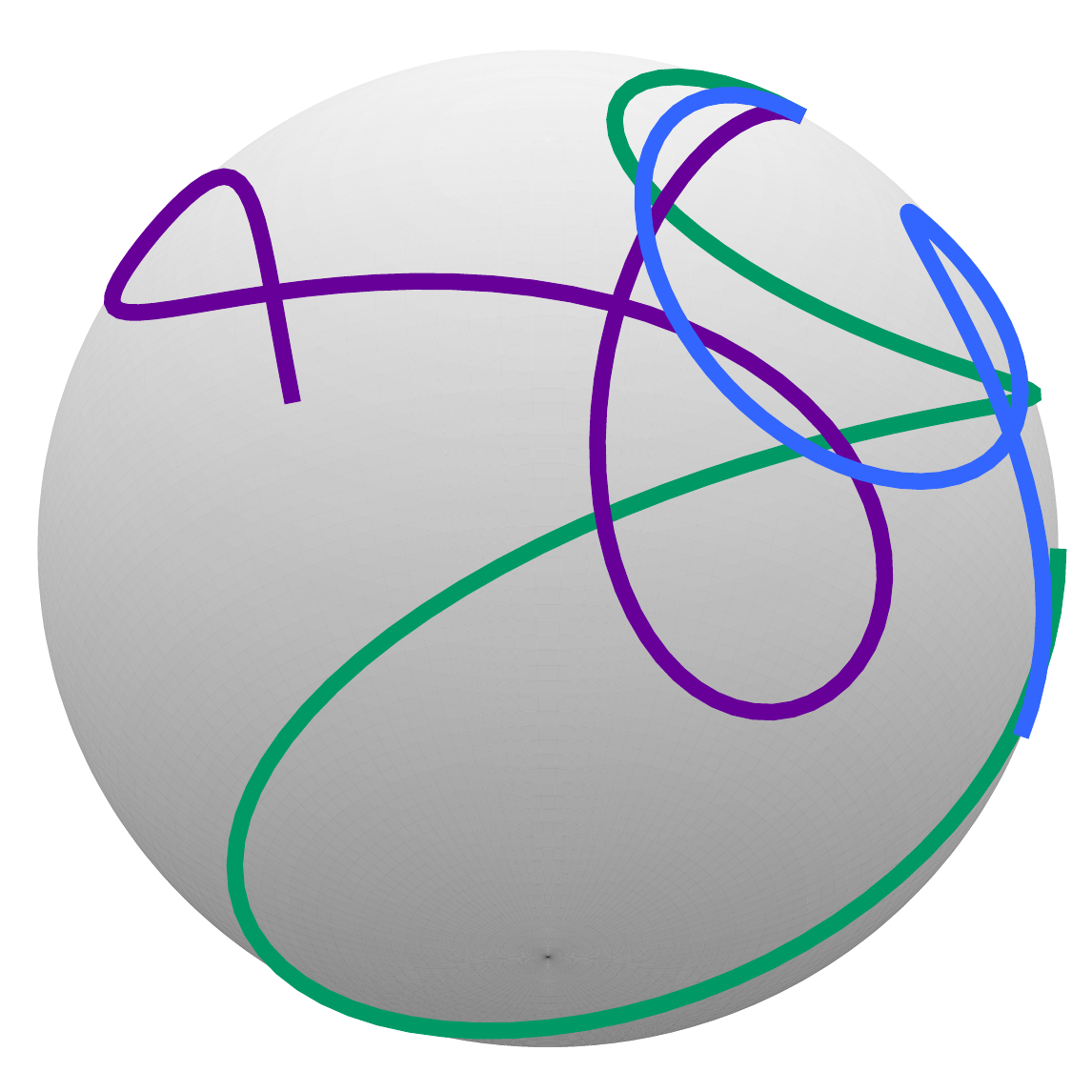}
\includegraphics[width=0.33\textwidth]{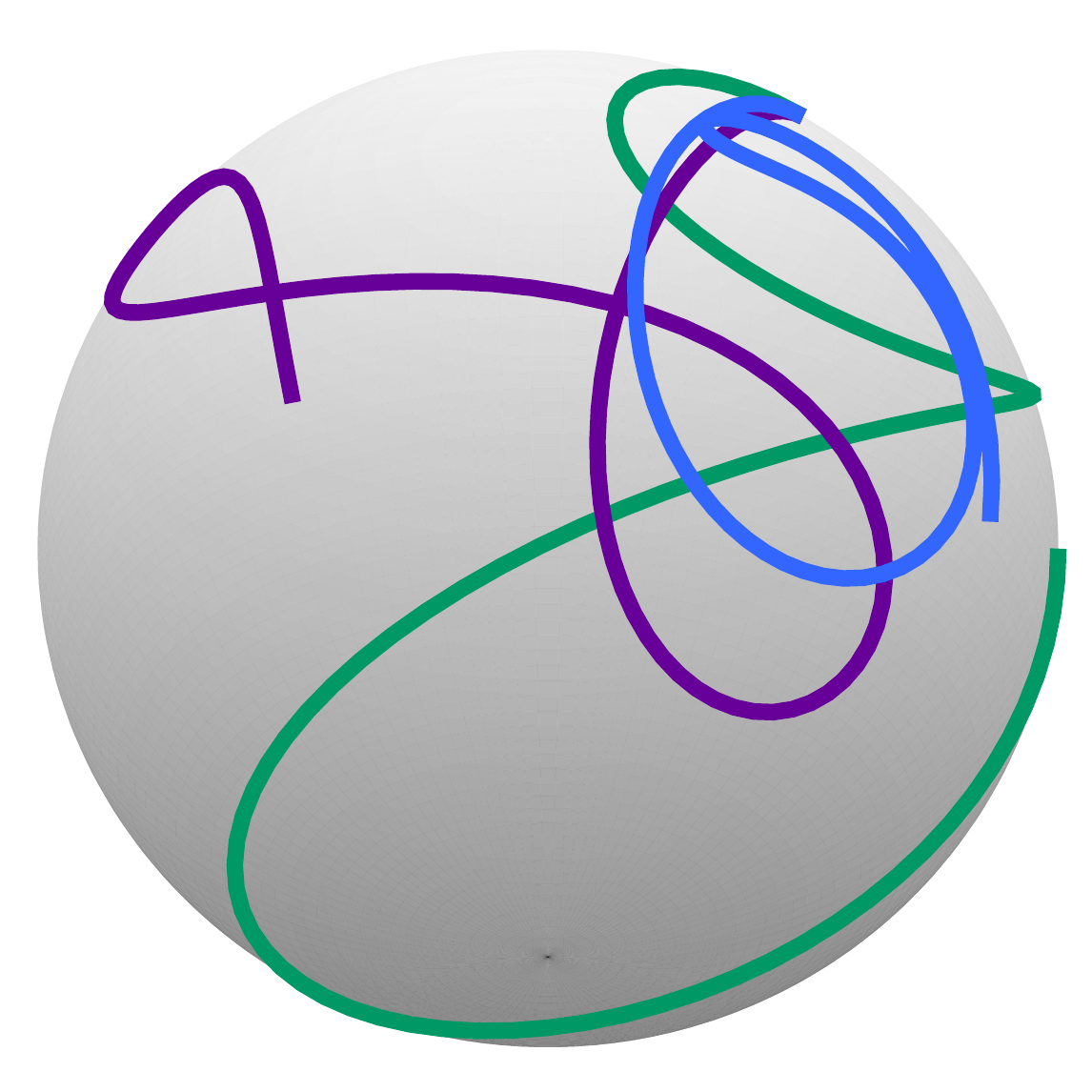}}\\
\subfloat[The interpolated curves at times $t = \left\{\frac{1}{4},\frac{1}{2},\frac{3}{4}\right\}$, from left to right, after reparametrisation.]{
\includegraphics[width=0.33\textwidth]{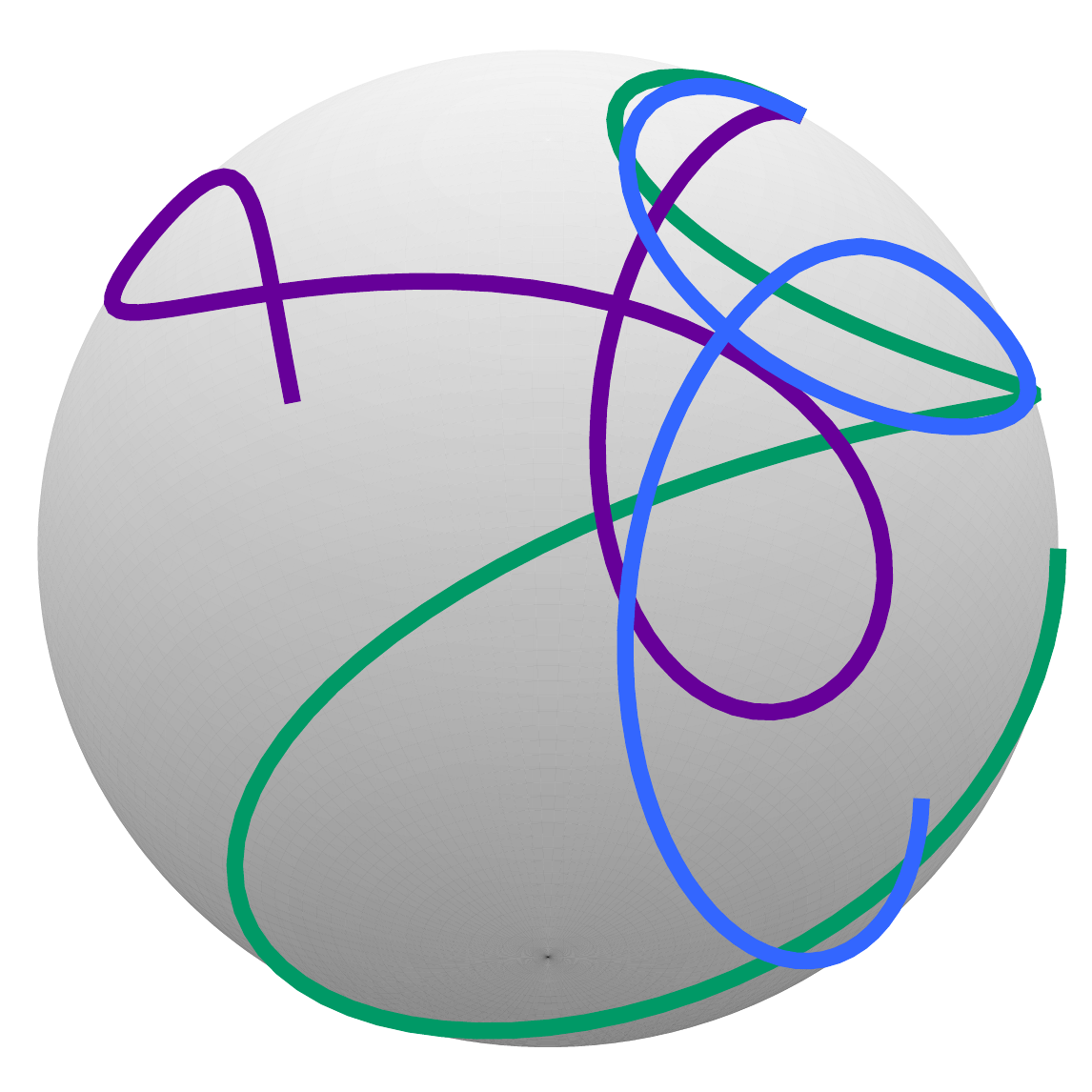}
\includegraphics[width=0.33\textwidth]{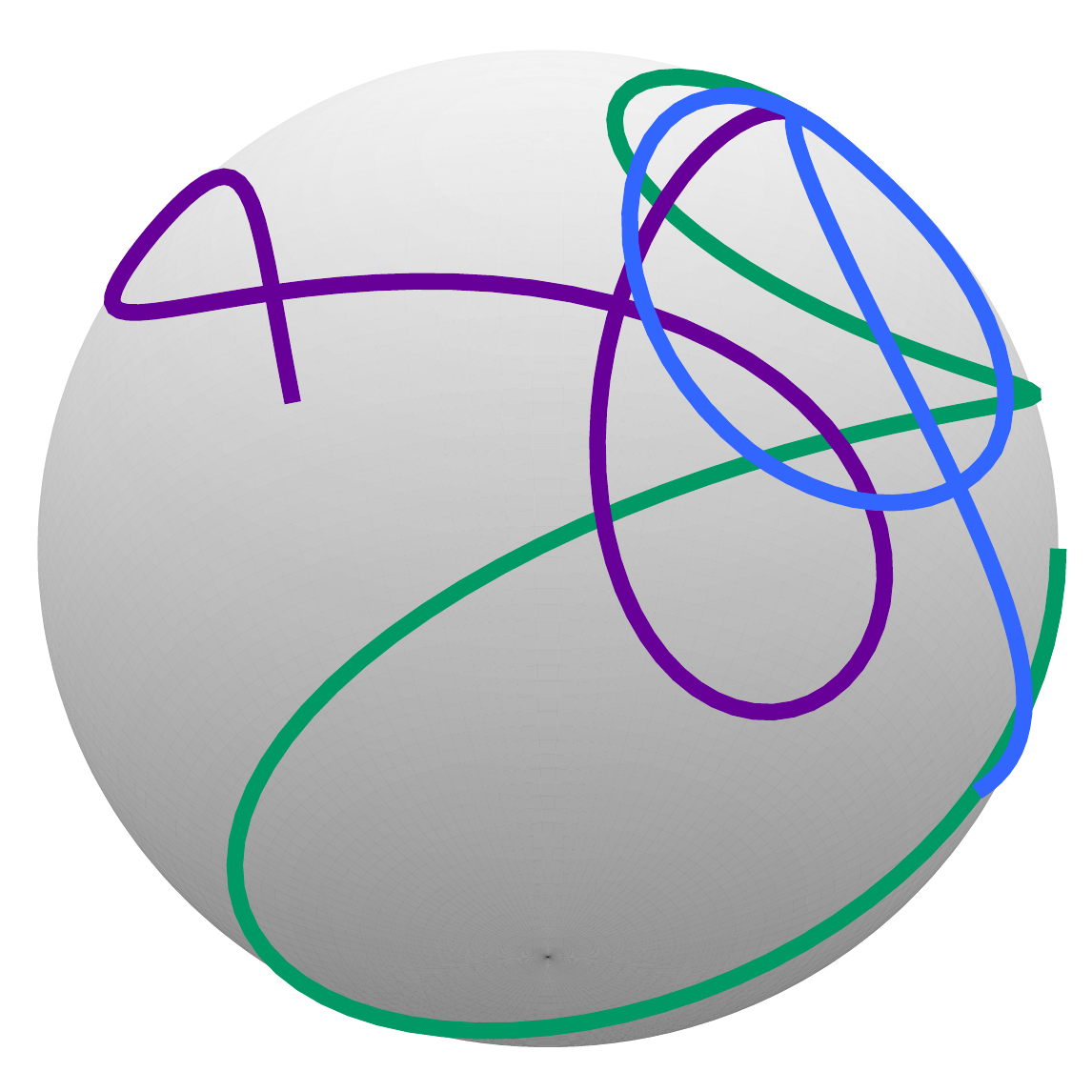}
\includegraphics[width=0.33\textwidth]{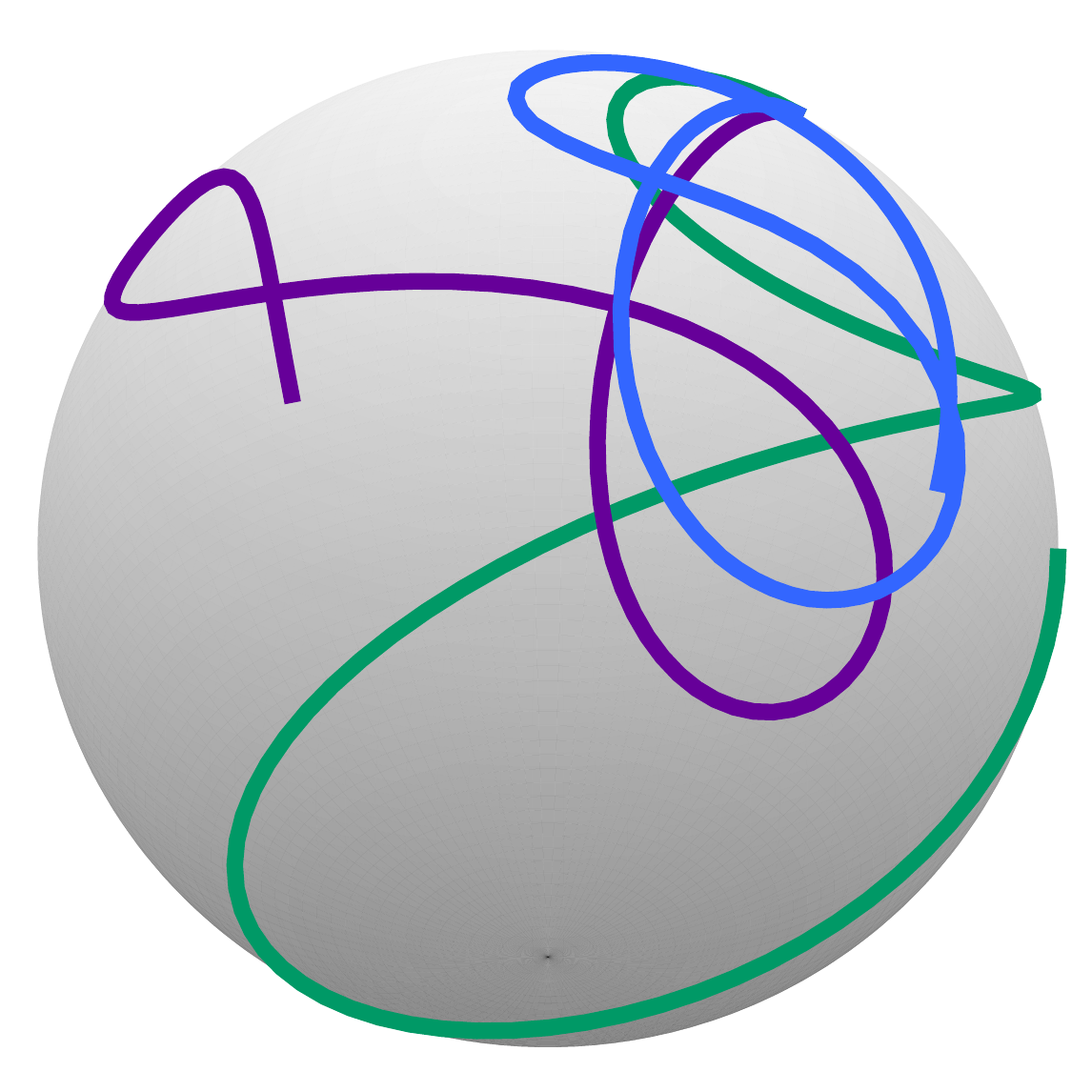}}
\caption{Interpolation between two curves on $\text{S}^2$, with and without reparametrisation, found by the reductive SRVT (\ref{eq:twistedSRVT}). The curves are $c^1(t) =  R_x(2 \pi t) R_y(2 \pi t) R_z(\pi t) \cdot (0,1,1)^\text{T}/\sqrt{2}$ and $c^2(t) =  R_z(2 \pi t) R_x(2 \pi t) R_y(\pi t/2)\cdot (0,1,1)^\text{T}/\sqrt{2}$ for $t \in [0,1]$, where $R_x(t)$, $R_y(t)$ and $R_z(t)$ are the rotation matrices in $\text{SO}(3)$ corresponding to rotation of an angle $t$ around the $x$-, $y$- and $z$-axis, respectively.}
\label{fig:curves2}
\end{center}
\end{figure}

\begin{figure}[htbp]
\begin{center}
\subfloat[From left to right: The original parametrisations of the curves to be interpolated, the reparametrisation minimizing the distance in $\text{SO}(3)$ and the reparametrisation minimizing the distance in $S^\text{2}$, using the SRVT (\ref{eq:numSRVT}).]{
\includegraphics[width=0.33\textwidth]{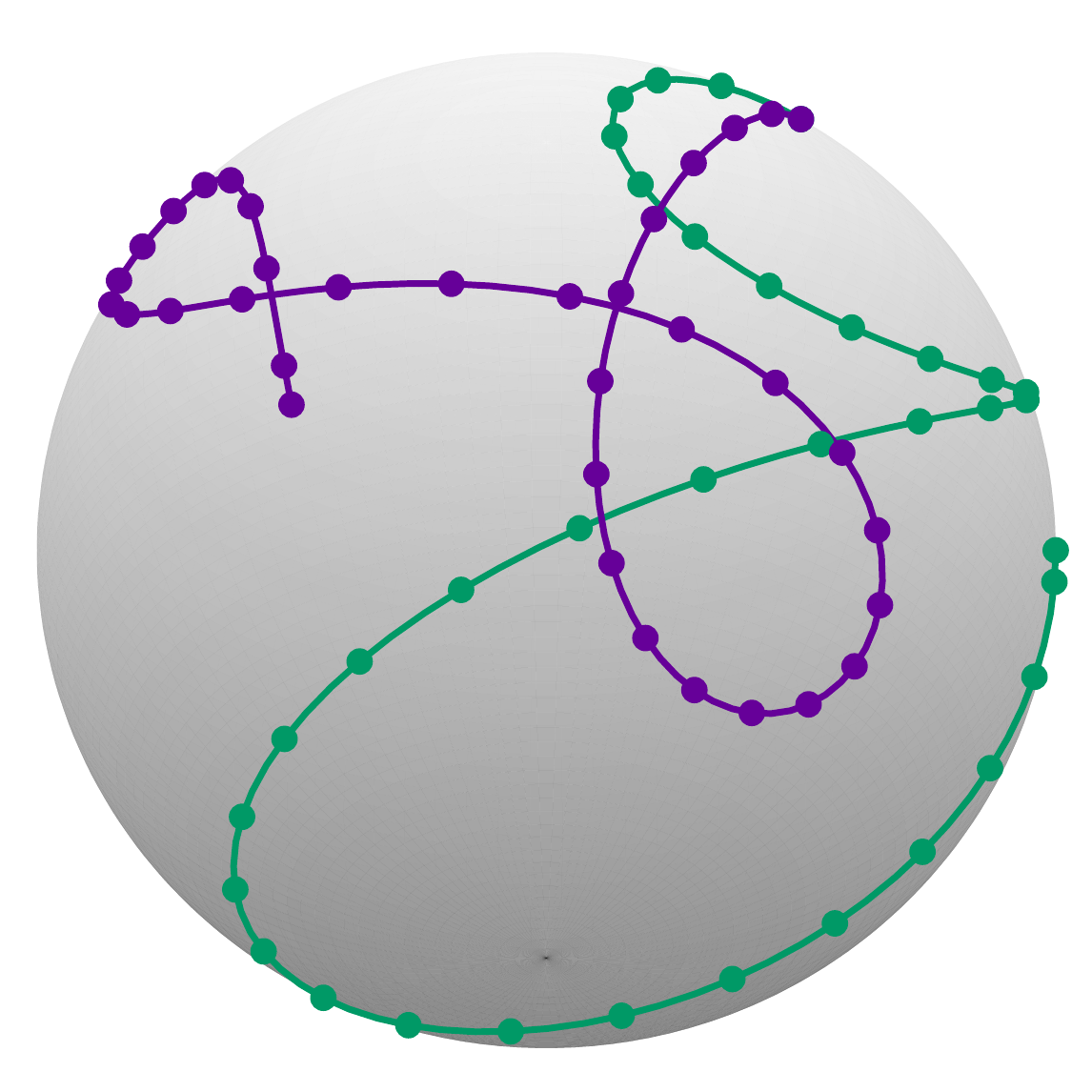}
\includegraphics[width=0.33\textwidth]{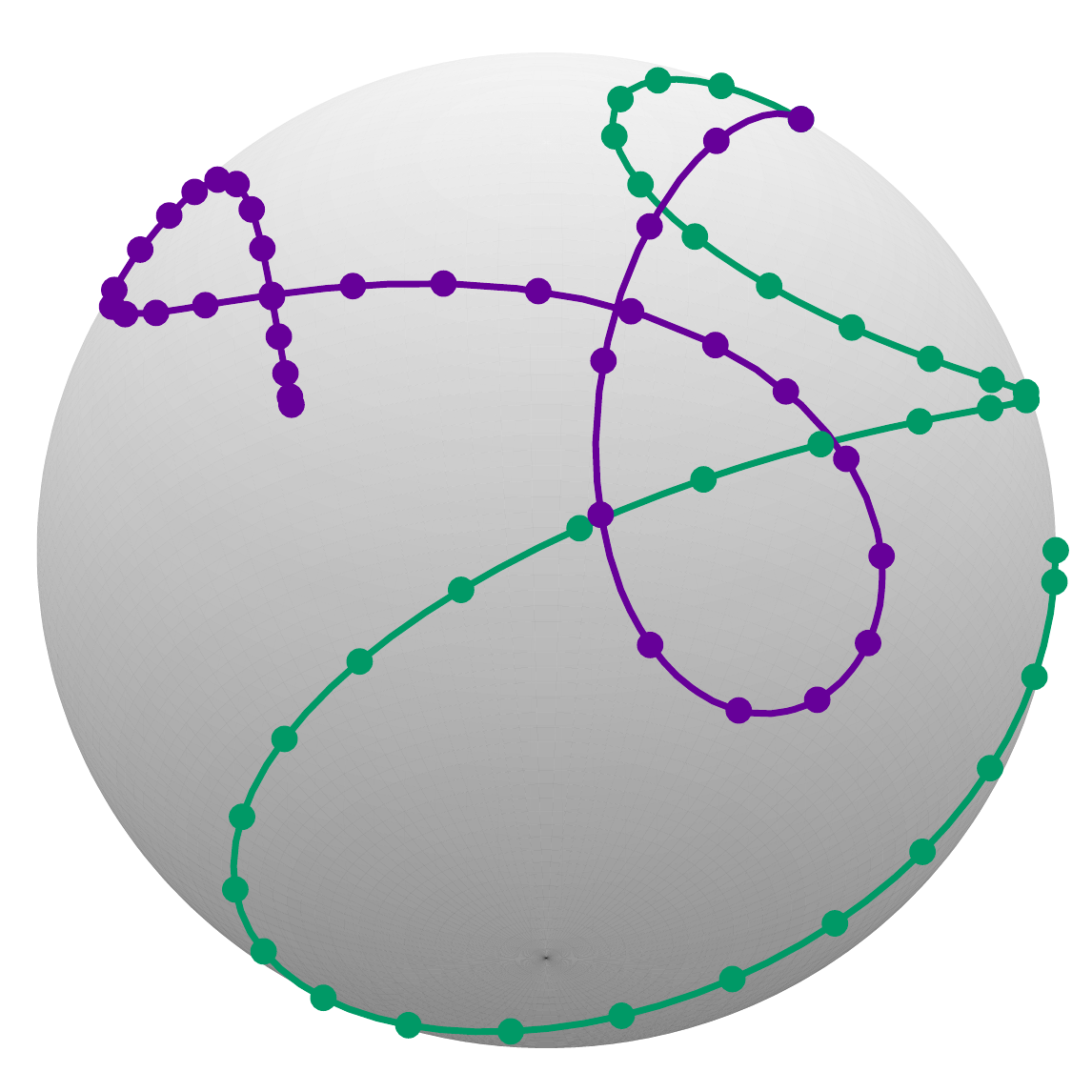}
\includegraphics[width=0.33\textwidth]{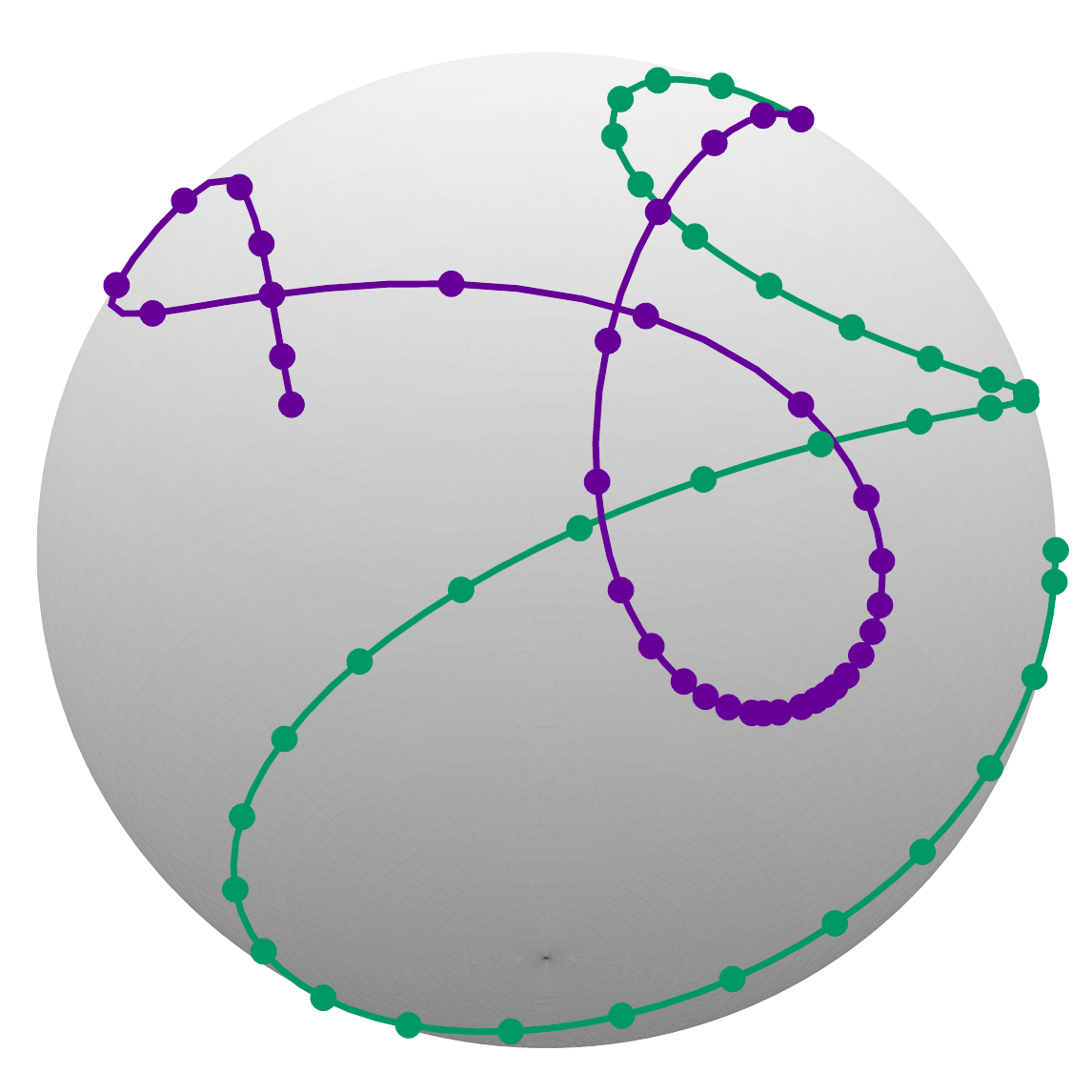}}\\
\subfloat[The interpolated curves at times $t = \left\{\frac{1}{4},\frac{1}{2},\frac{3}{4}\right\}$, from left to right, before reparametrisation, on $\text{SO}(3)$ (yellow, dashed line) and $\text{S}^2$ (red, solid line).]{
\includegraphics[width=0.33\textwidth]{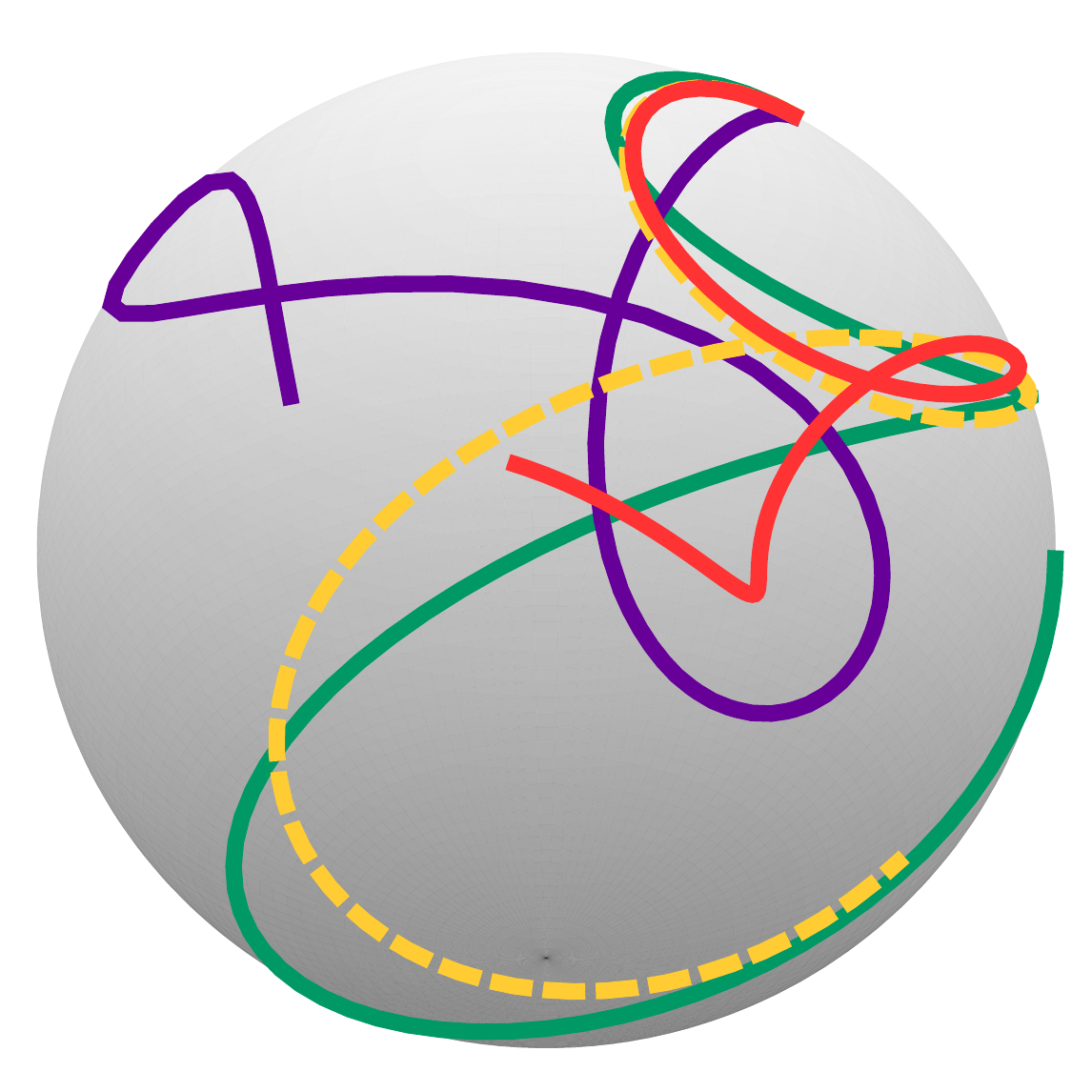}
\includegraphics[width=0.33\textwidth]{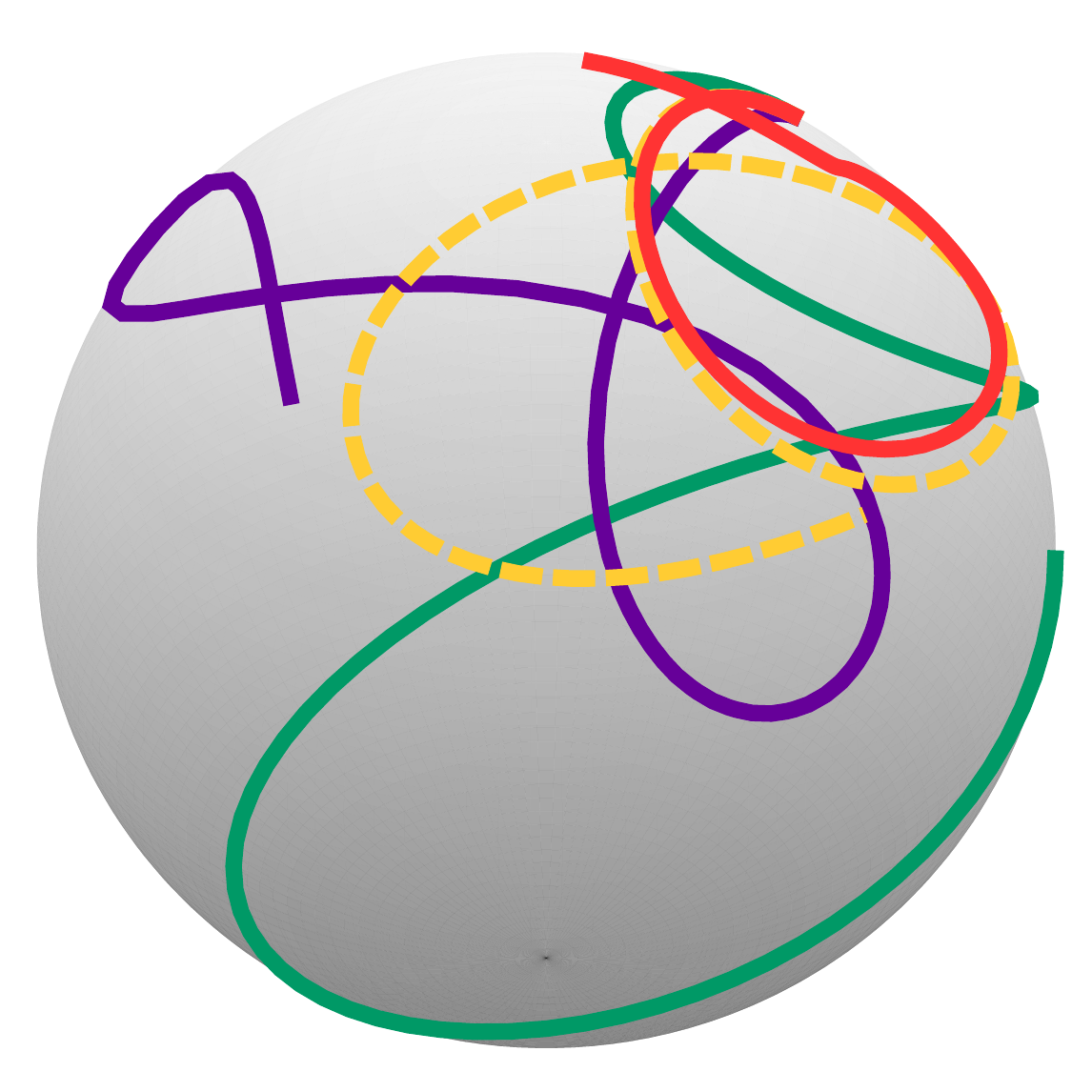}
\includegraphics[width=0.33\textwidth]{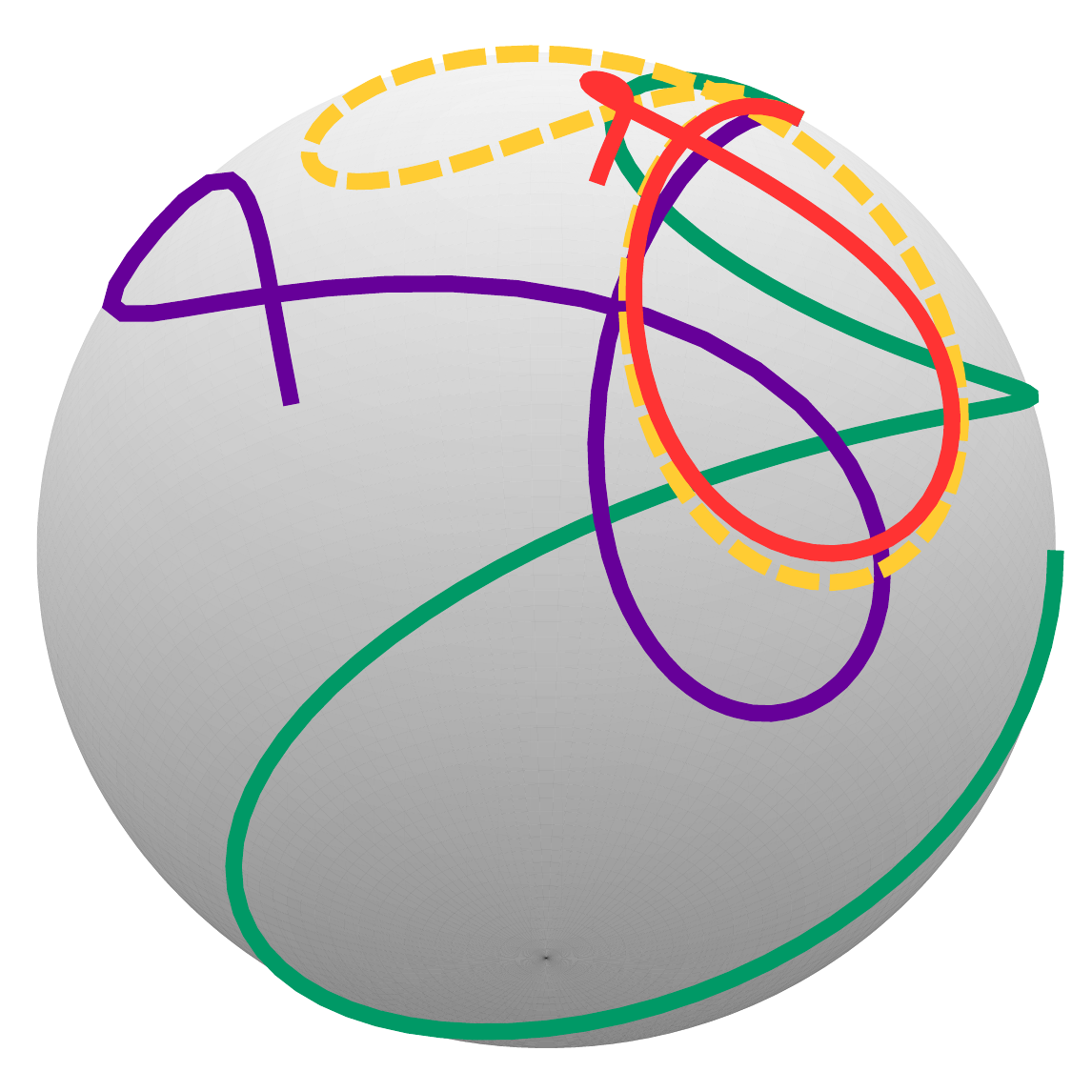}}\\
\subfloat[The interpolated curves at times $t = \left\{\frac{1}{4},\frac{1}{2},\frac{3}{4}\right\}$, from left to right, after reparametrisation, on $\text{SO}(3)$ (yellow, dashed line) and $\text{S}^2$ (red, solid line).]{
\includegraphics[width=0.33\textwidth]{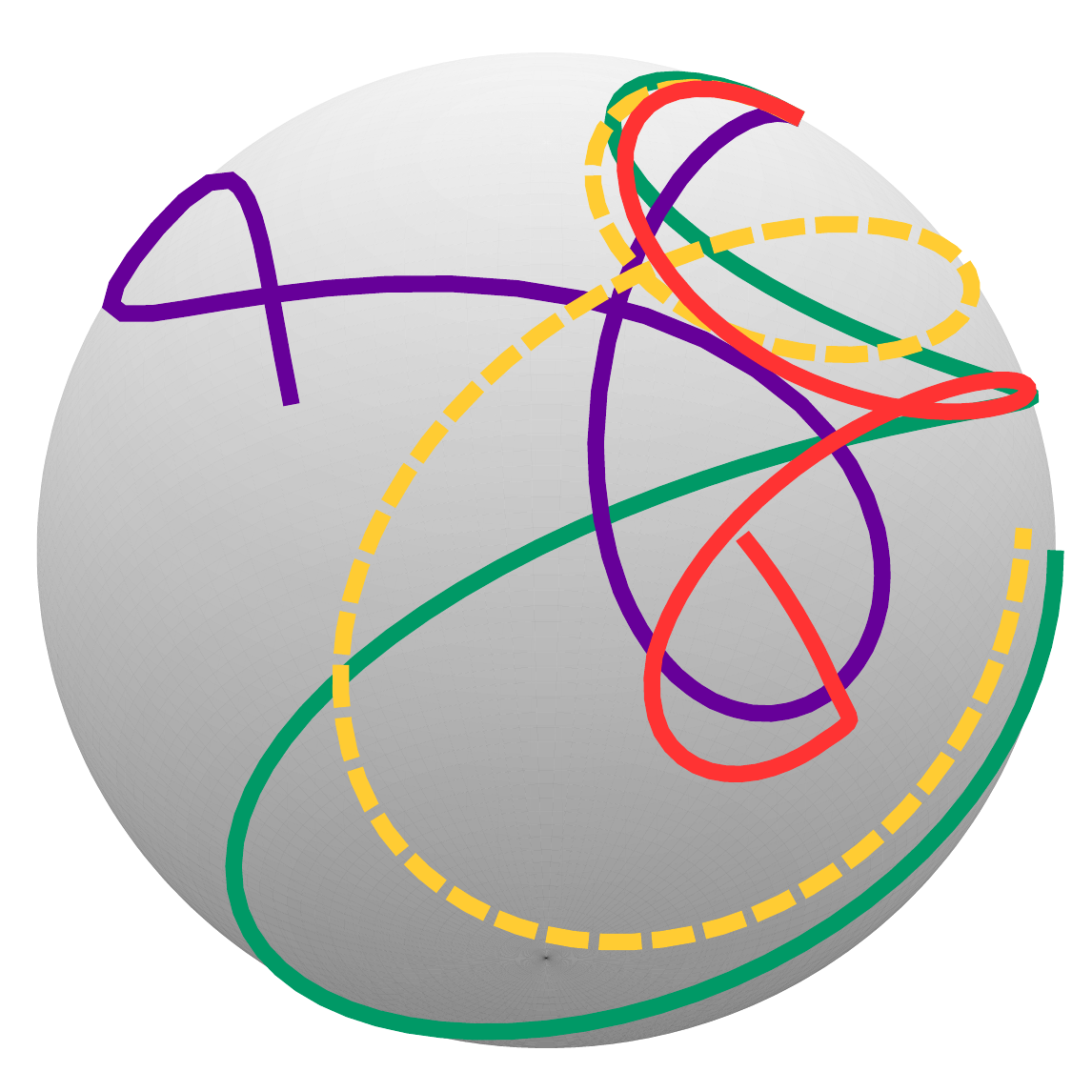}
\includegraphics[width=0.33\textwidth]{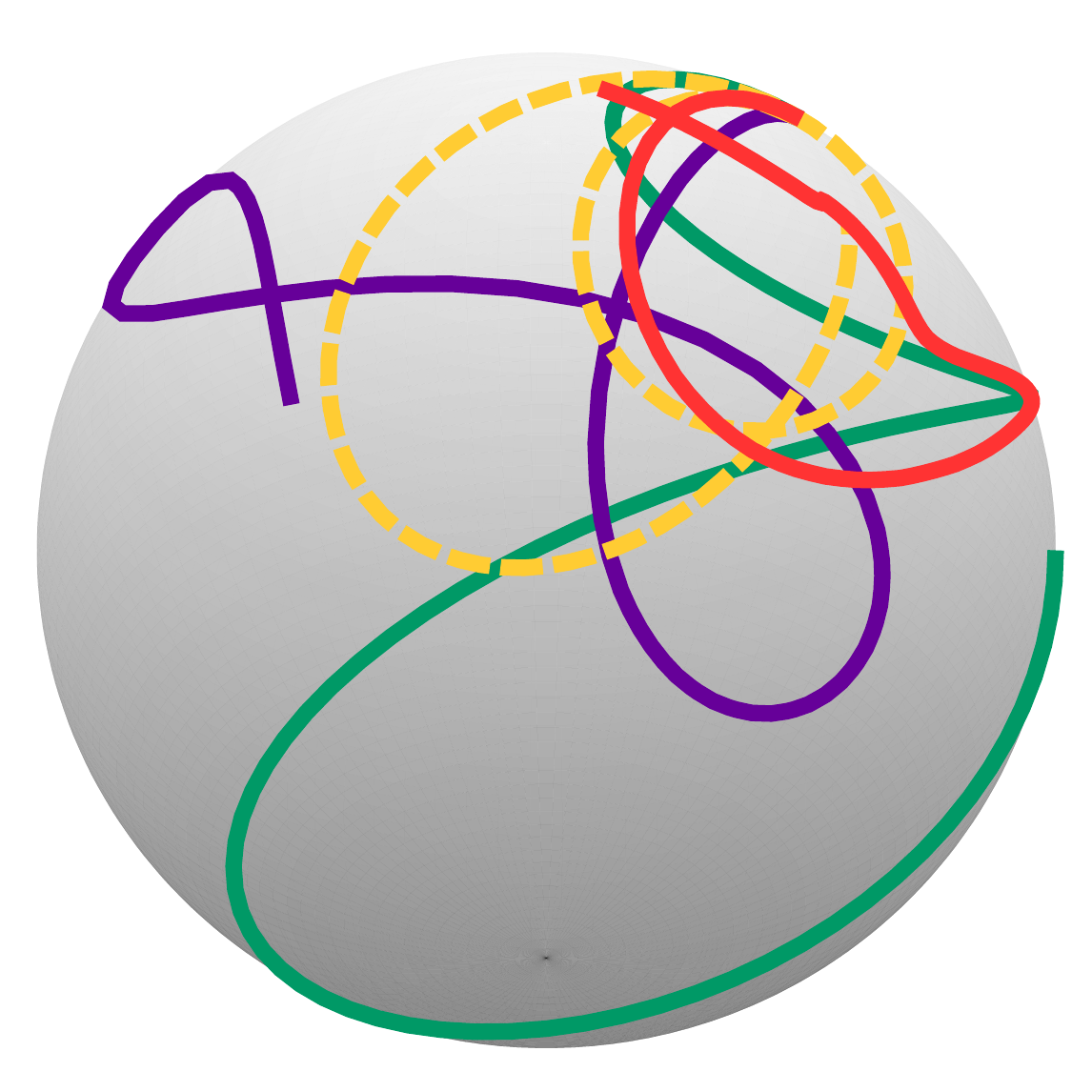}
\includegraphics[width=0.33\textwidth]{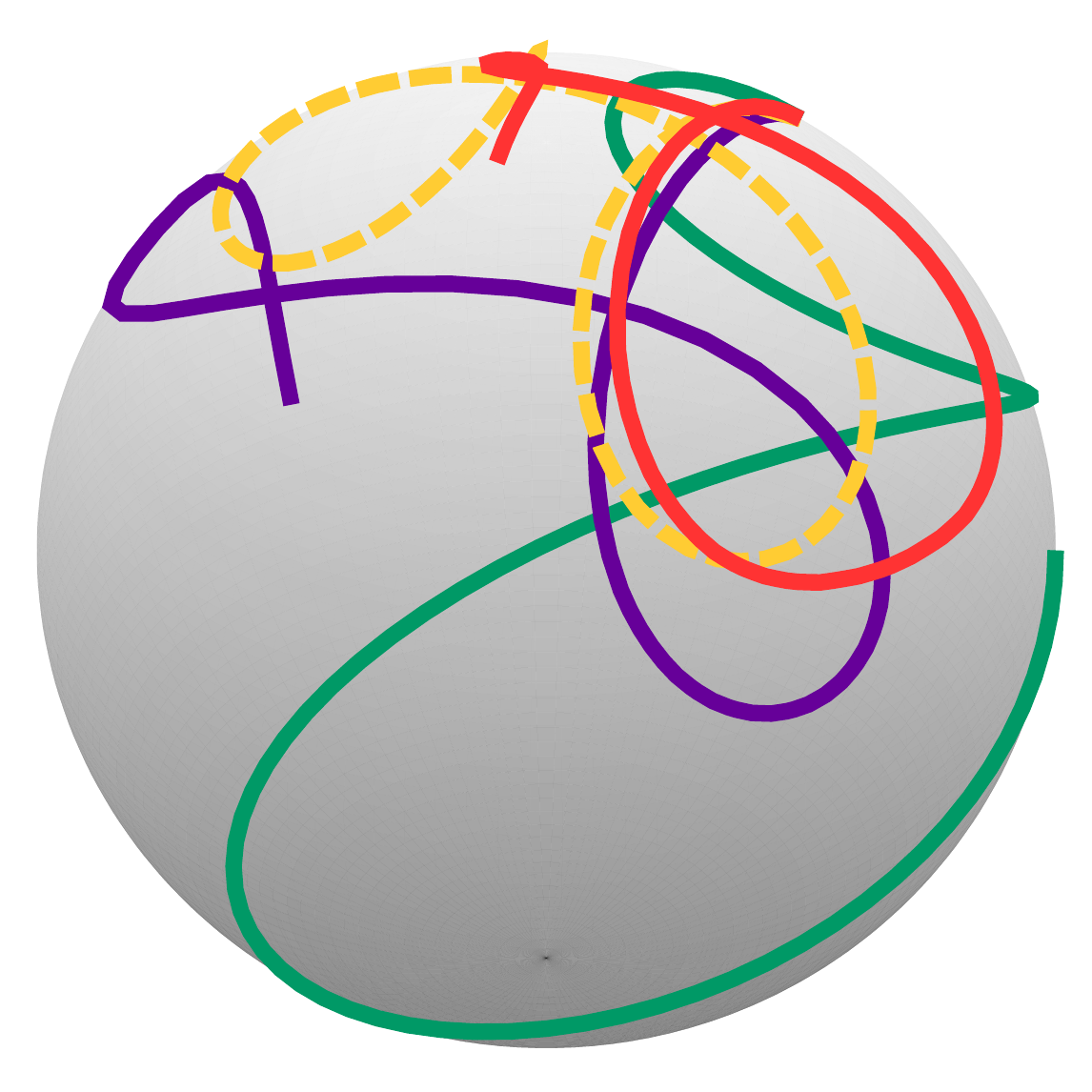}}
\caption{Interpolation between the same curves as in Figure \ref{fig:curves2}, with and without reparametrisation, obtained here with the SRVT (\ref{eq:numSRVT}), compared to the corresponding interpolation between curves on $\text{SO}(3)$ mapped to $\text{S}^2$ by multiplication with the vector $(0,1,1)^\text{T}/\sqrt{2}$.}
\label{fig:curves3}
\end{center}
\end{figure}

\begin{appendix} \section{Appendix}
% \section{Appendix: Auxiliary results for Section \ref{sect: SRVT:reductive}}\label{app: aux}
\begin{setup}[Auxiliary results for Section \ref{sect: SRVT:reductive}]\label{app: aux}
  \begin{lem}\label{lem: smoothness}
 For the homogeneous space $\mathcal{M} = G/H$ with projection $\pi \colon G \rightarrow G/H$ the derivation map 
$
  D_{\mathcal{M}} \colon C^\infty (I, G/H) \rightarrow C^\infty (I, T(G/H)),  c \mapsto \dot{c}
$
 is smooth.
 \end{lem}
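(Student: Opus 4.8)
Smoothness is a local property, so one could in principle argue in charts, but I would instead give a chart-free argument reducing the lemma to the smoothness of the evaluation map and the exponential law for manifolds of mappings. Concretely, by the exponential law it suffices to show that the map associated with $D_{\mathcal{M}}$, namely
\[
 D_{\mathcal{M}}^{\wedge} \colon C^\infty(I,\mathcal{M}) \times I \longrightarrow T\mathcal{M}, \qquad (c,t) \longmapsto \dot c(t),
\]
is smooth; for then $D_{\mathcal{M}} = (D_{\mathcal{M}}^{\wedge})^{\vee} \colon C^\infty(I,\mathcal{M}) \to C^\infty(I,T\mathcal{M})$ is smooth, using that $I$ is compact so that the exponential law applies in the calculus employed here (cf.\ \cite[Section 42]{MR1471480}, \cite[Section 11]{MR583436}, and the remarks on calculus on intervals in \cite[1.1]{hgreg2015}).

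The plan for the associated map is as follows. First I would recall that the evaluation $\ev \colon C^\infty(I,\mathcal{M}) \times I \to \mathcal{M}$, $(c,t) \mapsto c(t)$, is smooth (\cite[Section 11]{MR583436}; this is the other half of the exponential law, applied to $\id_{C^\infty(I,\mathcal{M})}$). Applying the tangent functor and using $T\bigl(C^\infty(I,\mathcal{M}) \times I\bigr) \cong TC^\infty(I,\mathcal{M}) \times TI$ together with the trivialisation $TI \cong I \times \mathbb{R}$ (the tangent bundle of the $1$-manifold-with-boundary $I$ is trivial), one obtains a smooth map $T\ev \colon TC^\infty(I,\mathcal{M}) \times TI \to T\mathcal{M}$. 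Precomposing with the smooth map $(c,t) \mapsto (0_c,(t,1))$ (the zero section of $TC^\infty(I,\mathcal{M})$ in the first slot and the canonical unit vector field $t \mapsto (t,1)$ of $TI$ in the second) yields a smooth map $\Phi \colon C^\infty(I,\mathcal{M}) \times I \to T\mathcal{M}$, $\Phi(c,t) = T\ev(0_c,(t,1))$. The key point is then the identification $\Phi = D_{\mathcal{M}}^{\wedge}$: differentiating $\ev$ at $(c,t)$ in the direction $(0_c,(t,1))$ annihilates the $C^\infty(I,\mathcal{M})$-component and leaves the derivative at $s = t$ of $s \mapsto \ev(c,s) = c(s)$, which is by definition $\dot c(t) \in T_{c(t)}\mathcal{M}$. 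In particular $\pi_{T\mathcal{M}} \circ \Phi(c,\cdot) = c$, so $\Phi(c,\cdot) \in C^\infty(I,T\mathcal{M})$ covers $c$ and $\Phi$ really is the map associated with $D_{\mathcal{M}}$.

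I expect the main obstacle to be minor and bookkeeping in nature: one must make sure the versions of ``evaluation is smooth'' and of the exponential law being invoked genuinely cover both the infinite-dimensional factor $C^\infty(I,\mathcal{M})$ and the boundary manifold $I = [0,1]$ (both lie within the scope of the calculus used in this paper, cf.\ the footnotes in Section~\ref{sect: SRVT}), and one must carry out the chain-rule verification $\Phi(c,t) = \dot c(t)$ carefully. As an alternative I would note that the lemma can also be proved directly in charts: choosing a local addition $\eta$ on $\mathcal{M}$ and a separate local addition $\eta'$ on $T\mathcal{M}$, together with a trivialisation of the relevant pullback bundles (which involves picking a connection to express the velocity of a section), one checks that in the induced charts $D_{\mathcal{M}}$ takes the form $\xi \mapsto \bigl(t \mapsto F(t,\xi(t),\xi'(t))\bigr)$ for a smooth fibre map $F$; since $\xi \mapsto (\xi,\xi')$ is continuous and linear on $C^\infty(I,\cdot)$ and post-composition with $F$ is smooth (\cite[42.13]{MR1471480}), this is smooth. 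This route avoids the exponential law at the cost of more work with pullback-bundle section spaces, so I would relegate it to a remark. Finally, note that the argument nowhere uses that $\mathcal{M}$ is homogeneous: $D_N$ is smooth for an arbitrary manifold $N$.
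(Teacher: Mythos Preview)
Your argument is correct. You show that the associated map $D_{\mathcal{M}}^{\wedge}(c,t)=\dot c(t)$ factors as $T\ev$ precomposed with the smooth map $(c,t)\mapsto (0_c,(t,1))$, and then invoke the exponential law to conclude. The identification $T\ev(0_c,(t,1))=\dot c(t)$ is just the chain rule applied to $s\mapsto \ev(c,s)$, and the references you cite do cover the manifold-with-boundary $I$ and the infinite-dimensional factor $C^\infty(I,\mathcal{M})$ in the calculus used here.

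This is, however, a genuinely different route from the paper's own proof. The paper does \emph{not} work directly on $\mathcal{M}$: it first quotes that $D_G\colon C^\infty(I,G)\to C^\infty(I,TG)$ is smooth for the Lie group $G$ (from \cite[Lemma~2.1]{hgreg2015}), then uses the Stacey--Roberts lemma to see that the push-forward $\theta_\pi\colon C^\infty(I,G)\to C^\infty(I,G/H)$ is a surjective submersion, and finally descends the naturality relation $\theta_{T\pi}\circ D_G = D_{\mathcal{M}}\circ \theta_\pi$ through this submersion via \cite[Lemma~1.8]{glockner15fos}. Thus the paper's proof genuinely exploits the homogeneous-space structure and the fact that the result is already available upstairs on $G$, whereas your argument is a direct one that applies to an arbitrary target manifold (as you note). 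The advantage of the paper's approach is that it reuses machinery already in play elsewhere in the article (the Stacey--Roberts lemma and the descent lemma); the advantage of yours is generality and that it avoids the submersion machinery entirely, at the price of having to be careful that the exponential law is available in the precise setting.
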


 \begin{proof}
 The map $D_G \colon C^\infty (I,G) \rightarrow C^\infty (I,TG),\ \gamma \mapsto \dot{\gamma}$ is a smooth group homomorphism by \cite[Lemma 2.1]{hgreg2015}.
 As $\pi \colon G \rightarrow G/H$ is a smooth submersion, $\theta_\pi \colon C^\infty (I, G) \rightarrow C^\infty (I, G/H), c \mapsto \pi \circ c$ is a smooth submersion \cite[Lemma 2.4]{1706.04816v1}.
% Furthermore, $T\pi \colon TG \rightarrow T\mathcal{M}$ is smooth, whence $\theta_{T\pi}$ is smooth. 
 Write $\theta_{T\pi} \circ D_G = D \circ \theta_{\pi}$, whence by \cite[Lemma 1.8]{glockner15fos} $D_{\mathcal{M}}$ is smooth.
\end{proof} 

\begin{lem}\label{lem: identity}
With $\SRp \coloneq \Pomega \circ D$ The identity \eqref{claim} $\id_{C^\infty_{eH} (I,\mathcal{M})} = \pi \circ \Evol \circ \SRp$ holds. 
\end{lem}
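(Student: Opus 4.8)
The plan is to reduce the claim to a \emph{pointwise} statement about the $1$-form $\omega$ and then to propagate it along curves using uniqueness of integral curves of a smooth (time-dependent) vector field on the finite-dimensional manifold $\mathcal{M}$.

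First I would show that $\omega$ is a pointwise right inverse of the derivative of the orbit map at the identity: for every $g \in G$ and $v \in T_{gH}\mathcal{M}$ one has $T_e\Lambda_{gH}(\omega(v)) = v$. For this I would use three elementary commutation identities: (i) the orbit map through $gH$ factors as $\Lambda_{gH} = \pi \circ R_g$ (since $m \cdot gH = \pi(mg)$), hence $T_e\Lambda_{gH} = T_g\pi \circ T_eR_g$; (ii) $R_g \circ \mathrm{conj}_g = L_g$, which is a one-line computation and gives $T_eR_g \circ \Ad(g) = T_eL_g$ as maps $\g \to T_gG$; and (iii) $\pi \circ L_g = \Lambda^g \circ \pi$ from \eqref{eq: comm}. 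Writing $X := \omega_e\bigl(T\Lambda^{g^{-1}}(v)\bigr) \in \m$, so that $T_e\pi(X) = T\Lambda^{g^{-1}}(v)$ by the very definition of $\omega_e$, the computation would run
\[
T_e\Lambda_{gH}(\omega(v)) = T_g\pi\bigl(T_eR_g(\Ad(g).X)\bigr) = T_g\pi\bigl(T_eL_g(X)\bigr) = T_e(\Lambda^g\circ\pi)(X) = T\Lambda^g\bigl(T\Lambda^{g^{-1}}(v)\bigr) = v,
\]
where the last equality uses $\Lambda^g \circ \Lambda^{g^{-1}} = \id_\mathcal{M}$. This is the step I expect to be the main obstacle: it is only bookkeeping, but the identities (ii) and (iii) must be chained in exactly the right order so that the factor $\Ad(g)$ built into $\omega$ is first absorbed into the passage from right to left translation and then cancelled against the equivariance of $\pi$.

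Next I would fix $c \in C^\infty_{eH}(I,\mathcal{M})$ and set $q := \Pomega(\dot c) = \omega \circ \dot c \in C^\infty(I,\g)$. Evaluating the pointwise identity at $v = \dot c(t)$ and $gH = c(t)$ gives $\dot c(t) = T_e\Lambda_{c(t)}(q(t))$, i.e.\ $c$ solves the initial value problem $\dot x(t) = T_e\Lambda_{x(t)}(q(t))$, $x(0) = eH$. By Remark \ref{rem: gen:EVOL}, the curve $\rho_{eH}(q)$ solves the very same initial value problem; since $(t,x) \mapsto T_e\Lambda_x(q(t))$ is smooth and $\mathcal{M}$ is finite dimensional, uniqueness of solutions on $I$ forces $c = \rho_{eH}(q)$.

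Finally, unravelling definitions, $\rho_{eH}(q) = \Lambda_{eH}\circ\Evol(q)$ and $\Lambda_{eH} = \pi$ (the orbit map through $eH$ is the projection), so $c = \pi\circ\Evol\circ\Pomega(\dot c) = \pi\circ\Evol\circ\SRp(c)$. As $c$ was arbitrary in $C^\infty_{eH}(I,\mathcal{M})$, this establishes $\id_{C^\infty_{eH}(I,\mathcal{M})} = \pi\circ\Evol\circ\SRp$, which is \eqref{claim}.
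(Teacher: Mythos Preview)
Your proof is correct and takes a genuinely different route from the paper's. The paper chooses a smooth lift $g \colon I \to G$ of $c$ with $g(0)=e$, sets $\gamma := \Evol(\SRp(c))$, and then shows that the curve $\gamma^{-1}g$ stays in $H$ by computing its derivative via the product formula for the right logarithmic derivative, $\delta^r(fh) = \delta^r(f) + \Ad(f).\delta^r(h)$, and checking that the resulting $\g$-valued expression lies in $\ker T_e\pi = \h$. In other words, the paper works upstairs in $G$ and uses the logarithmic-derivative calculus; you instead isolate the pointwise identity $T_e\Lambda_{gH}(\omega(v)) = v$ and then work directly on $\mathcal{M}$, invoking uniqueness for the time-dependent ODE characterising $\rho_{eH}$ from Remark~\ref{rem: gen:EVOL}. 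Your approach is arguably more transparent: the pointwise identity is the conceptual core (it says $\omega$ is a fibrewise right inverse of the infinitesimal action), and once it is in hand the rest is a one-line uniqueness argument, with no need to choose a lift of $c$ to $G$. The paper's approach, by contrast, keeps everything in the Maurer--Cartan/evolution formalism already set up in \ref{setup: Evol}, which makes the computation uniform with the other arguments in the appendix (e.g.\ Lemma~\ref{lem: involution}) and sidesteps any explicit appeal to ODE uniqueness on $\mathcal{M}$.
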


\begin{proof}
 Let $c \colon I \rightarrow \mathcal{M}$ be smooth with $c(0)=eH$ and choose $g \colon I \rightarrow G$ smooth with $g(0)=e$ and $\pi \circ g = c$.
	Set $\gamma (t) := \Evol (\SRp (c)) (t)$.
	It suffices to prove that $\gamma (t)^{-1}g(t) \in H$ for all $t \in I$. Then $\pi \circ \gamma  = \pi \circ g = c$ and the assertion follows.
	
	As $\gamma (0)^{-1}g(0) = e \in H$, we only have to prove that $\frac{\dif}{\dif t}\pi (\gamma (t)^{-1}g(t))$ vanishes everywhere to obtain $\gamma (t)^{-1}g(t) \in H$.
	Before we compute the derivative of $\pi (\gamma (t)^{-1}g(t))$, let us first collect some facts concerning the logarithmic derivatives $\delta^r (f) = \dot{f}.f^{-1}$ and $\delta^l (f)= f^{-1}. \dot{f}$.
	By definition $\delta^r (\gamma) = \delta^r (\Evol (\SRp (c))) = \SRp (c)$. 
	Further,\cite[Lemma 38.1]{MR1471480} yields for smooth $f,h \colon I \rightarrow G$: 
	\begin{align}\label{log: deriv}
	\delta^r (f \cdot h) &= \delta^r (f) + \Ad(f).\delta^r (h) \quad \text{ and } \quad \delta^r  (f^{-1}) = - \delta^{l} (f), \quad \text{whence}\\
	\frac{\dif}{\dif t}(\gamma (t)^{-1}g(t)) &\stackrel{\hphantom{\eqref{log: deriv}}}{=} (\gamma (t)^{-1}g(t)) \cdot \delta^l (\gamma^{-1}g)(t) \stackrel{\eqref{log: deriv}}{=} -(\gamma (t)^{-1}g(t)) \cdot  \delta^r(g^{-1}\gamma)(t) \notag \\
	%&\stackrel{\eqref{log: deriv}}{=} -(\gamma (t)^{-1}g(t)) \cdot (\delta^r(g^{-1})(t) + \Ad (g(t)^{-1}). \delta^{r} (\gamma)(t))\\
	&\stackrel{\eqref{log: deriv}}{=} (\gamma (t)^{-1}g(t)) \cdot (\delta^l(g)(t) - \Ad (g(t)^{-1}).\SRp (c)(t)) \notag
	\end{align}
	Recall that by definition, $\SRp (c)(t) = \omega (\dot{c}(t)) = \Ad (g(t)).\omega_e (T\Lambda^{g(t)^{-1}} (\dot{c} (t)))$ (here $\pi \circ g = c$ is used).
	Inserting this into the above equation we obtain 
	\begin{equation}
	\label{Dgammainvg}
	\frac{\dif}{\dif t}(\gamma (t)^{-1}g(t)) = (\gamma (t)^{-1}g(t)) \cdot (\delta^l(g)(t) - \omega_e (T \Lambda^{g(t)^{-1}} \circ \dot{c}(t))).
	\end{equation}
	Observe that $T_e\pi (\delta^l (g)(t)) = T\Lambda^{g(t)^{-1}} T\pi \dot{g}(t) = T\Lambda^{g(t)^{-1}} \dot{c}(t)$ since $\pi \circ g=c$.
	As $\omega_e$ is a section of $T_e \pi$, $T_e \pi (\delta^l(g)(t) - \omega_e (T \Lambda^{g(t)^{-1}} \circ \dot{c}(t))) = 0 \in T_{eH} \mathcal{M}$.
	Summing up
	\begin{align*}
	\frac{\dif}{\dif t}\pi (\gamma (t)^{-1}g(t)) %&\stackrel{\hphantom{\eqref{eq: comm}}}{=} T\pi \left(\frac{\dif}{\dif t}(\gamma (t)^{-1}g(t))\right) \\
	%&\stackrel{\hphantom{\eqref{eq: comm}}}{=} T \pi ((\gamma (t)^{-1}g(t)) \cdot (\delta^l(g)(t) - \Ad (g(t)^{-1}).\SRp (c)(t)))\\
	&\stackrel{\eqref{Dgammainvg}}{=} T \pi ((\gamma (t)^{-1}g(t)) \cdot (\delta^l(g)(t) - \omega_e (T \Lambda^{g(t)^{-1}} \circ \dot{c}(t))) \\
	&\stackrel{\eqref{eq: comm}}{=} T\Lambda^{\gamma (t)^{-1}g(t)} T_e\pi (\delta^l(g)(t) - \omega_e (T \Lambda^{g(t)^{-1}} \circ \dot{c}(t))) \qedhere% \\
	\stackrel{\hphantom{\eqref{eq: comm}}}{=} 0.% \in T_{(\gamma (t)^{-1}g(t))H} \mathcal{M} .
	\end{align*}
\end{proof}
\end{setup}
\begin{setup}[A chart for the image of the SRVT]
%\subsection{A chart for the image of the SRVT}
Let $G$ be a Lie group with Lie algebra $\g$. 
Using the adjoint action of $G$ on $\g$ and the evolution $\Evol \colon C^\infty (I,\g) \rightarrow C^\infty (I,G)$, we define the map 
  \begin{displaymath}
   \Psi \colon C^\infty (I,\g) \rightarrow C^\infty (I,\g),\quad  q \mapsto -\Ad (\Evol (q)^{-1} ) . q,
  \end{displaymath}
where the dot denotes pointwise application of the linear map $\Ad (\Evol(q)^{-1})$.
Observe that $\Psi$ (co)restricts to a mapping $C^\infty (I,\g \setminus \{0\}) \rightarrow C^\infty (I,\g \setminus \{0\})$.

\begin{lem}\label{lem: involution}
 The map $\Psi \colon C^\infty (I,\g) \rightarrow C^\infty (I,\g)$ is a smooth involution.
\end{lem}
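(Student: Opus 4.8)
The plan is to establish the two assertions contained in the statement separately: that $\Psi$ is smooth as a self-map of the \Frechet manifold $C^\infty(I,\g)$, and that $\Psi \circ \Psi = \id_{C^\infty(I,\g)}$. (Together with the observation recorded just above, that $\Psi$ (co)restricts to $C^\infty(I,\g\setminus\{0\})$, this also yields the involution property on that open subset.)

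For smoothness I would exhibit $\Psi$ as a composition of maps each of which is smooth by results quoted earlier. The inversion $\iota \colon \G \to \G$ and the adjoint representation $\Ad \colon \G \to \GL(\g) \subseteq \mathrm{End}(\g)$ are smooth, hence so is $\Ad \circ \iota$, and therefore so is the pushforward $\theta_{\Ad \circ \iota} \colon C^\infty(I,\G) \to C^\infty(I,\mathrm{End}(\g))$. Composing with the smooth map $\Evol \colon C^\infty(I,\g) \to C^\infty(I,\G)$ from \ref{setup: Evol} shows that $q \mapsto \Ad(\Evol(q)(\cdot)^{-1})$ is a smooth map $C^\infty(I,\g) \to C^\infty(I,\mathrm{End}(\g))$. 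Finally, the evaluation $\mathrm{End}(\g) \times \g \to \g$, $(A,v) \mapsto Av$, is smooth (indeed bilinear and continuous), so its pushforward $C^\infty(I,\mathrm{End}(\g)) \times C^\infty(I,\g) \to C^\infty(I,\g)$ is smooth by \cite[Theorem 42.13]{MR1471480}; precomposing with $q \mapsto (\Ad(\Evol(q)(\cdot)^{-1}), q)$ and negating shows $\Psi$ is smooth.

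For the involution property, fix $q \in C^\infty(I,\g)$ and set $g := \Evol(q)$, so that $g(0) = e$ and $\delta^r g = q$ since $\delta^r$ inverts $\Evol$ (\ref{setup: Evol}). Using the standard identity $\delta^l f = \Ad(f^{-1}).\delta^r f$ (cf.\ \cite[Lemma 38.1]{MR1471480}) together with $\delta^r(f^{-1}) = -\delta^l f$ from \eqref{log: deriv}, we obtain
\[
\Psi(q) = -\Ad(g^{-1}).q = -\Ad(g^{-1}).\delta^r g = -\delta^l g = \delta^r(g^{-1}).
\]
As $g^{-1}(0) = e$, applying $\Evol$ (which inverts $\delta^r$ on curves starting at $e$) gives $\Evol(\Psi(q)) = g^{-1}$, whence
\[
\Psi(\Psi(q)) = -\Ad\bigl(\Evol(\Psi(q))^{-1}\bigr).\Psi(q) = -\Ad(g).\bigl(-\delta^l g\bigr) = \Ad(g).\delta^l g = \delta^r g = q.
\]
Since $q$ was arbitrary, $\Psi \circ \Psi = \id$.

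The computation in the last paragraph is elementary once the logarithmic-derivative identities are in hand; the step requiring the most attention is the smoothness argument, specifically the claim that pointwise application of a smoothly varying endomorphism is a smooth operation on these mapping spaces. This is precisely the kind of statement handled by the exponential law and pushforward machinery for function spaces, so no genuine difficulty arises, but it deserves to be spelled out rather than left implicit.
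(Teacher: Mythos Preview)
Your proof is correct. The smoothness argument is essentially the same as the paper's (both factor $\Psi$ through $\Evol$, the adjoint action, and pointwise evaluation), differing only in presentation.

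For the involution property, however, you take a genuinely different and somewhat cleaner route. The paper computes $\Psi(\Psi(q))$ by expanding the definition twice, arriving at $\Ad\bigl((\Evol(q)\Evol(\Psi(q)))^{-1}\bigr).q$, and then shows that the path $\gamma_q := \Evol(q)\Evol(\Psi(q))$ is constant by verifying $\delta^r(\gamma_q)=0$ via the product formula \eqref{log: deriv}. Your argument instead identifies $\Psi(q) = -\delta^l g = \delta^r(g^{-1})$ directly, which immediately yields $\Evol(\Psi(q)) = g^{-1}$ and makes the involution transparent: a second application of $\Psi$ simply inverts $g^{-1}$ back to $g$. This interpretation is in fact the content of the Example following \ref{setup: twist} in the paper (where it is observed that $\Psi(\delta^r(c)) = -\delta^l(c)$ and $\Evol(-\delta^l(c)) = c^{-1}$), so your proof essentially promotes that remark to the proof of the lemma itself. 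The paper's computation is more self-contained in that it uses only the product rule for $\delta^r$, while yours relies on the additional identity $\delta^l f = \Ad(f^{-1}).\delta^r f$; both are standard and available from the cited source.
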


\begin{proof}
 To establish smoothness of $\Psi$, consider the commutative diagram 
 \begin{displaymath}
  \begin{xy}
  \xymatrix{
       C^\infty (I,\g) \ar[rrr]^\Psi \ar[d]_{(\Evol, \id_{C^\infty (I,\g)}}   &   & & C^\infty (I,\g) \ar@2{-}[d]  \\
       C^\infty (I,G) \times  C^\infty (I,\g) \ar[rrr]^-{(f,g) \mapsto \Ad (f).g}   &  & &  C^\infty (I,\g)    
  }
\end{xy}.
 \end{displaymath}
 As $\Ad \colon G \times \g \rightarrow \g$ is smooth, so is $(f,g) \mapsto \Ad (f).g$ (cf.\ \cite[Proof of Proposition 6.2]{hgreg2015}) and $\Psi$ is smooth as a composition of smooth maps.
 Compute for $q \in C^\infty (I,\g)$ 
 \begin{align*}
  \Psi (\Psi (q)) &= -\Ad (\Evol (\Psi (q))^{-1} ) . \Psi (q) 
		  = -\Ad (\Evol (-\Ad (\Evol (q)^{-1} ) . q)^{-1} ) . (-\Ad (\Evol (q)^{-1} ) . q)\\
		  &= \Ad ((\Evol (q)\Evol (-\Ad (\Evol (q)^{-1} ) . q))^{-1}).q.
		\end{align*}
 To see that $\Psi (\Psi (q))=q$ , we prove that $\gamma_q \coloneq \Evol (q)\Evol (-\Ad (\Evol (q)^{-1} ) . q)$ is a constant path. 
 Recall that $\Evol (q)$ and $\Evol (-\Ad (\Evol (q)^{-1} ) . q)$ are smooth paths starting at the identity in $G$. Hence it suffices to prove $\delta^r (\gamma_q) =0$.
 To this end, apply the product formula \eqref{log: deriv} and $\delta^r (\Evol (q))=q$: 
  \begin{align*}
   \delta^r (\gamma_q ))
 &= \delta^r (\Evol (q)) + \Ad (\Evol (q)). \delta^r (\Evol (-\Ad (\Evol (q)^{-1}).q))\\
 &= q + \Ad (\Evol (q)).(-\Ad (\Evol(q)^{-1}).q) = q-q =0. \qedhere
  \end{align*}
\end{proof}

%For a reductive homogeneous space $\mathcal{M} = \G /H$ with decomposition $\g = \h \oplus \m$ let  
%  $$\omega (v) := \Ad (g).\omega_e (T\Lambda^{g^{-1}} (v)) \quad \forall v \in T_{\pi (g)} \mathcal{M}.$$
 To account for the initial point $c_0 \in \M$, fix $g_0 \in \pi^{-1} (c_0)$ and define 
 \begin{displaymath}
   \Psi_{g_0} \colon C^\infty (I,\g) \rightarrow C^\infty (I,\g), \quad \Psi_{g_0} (q) := \Ad (g_0). \Psi (q)=-\Ad (g_0\Evol (q)^{-1}).q.
 \end{displaymath}
 For $k$ in the center of $G$, $\Psi_k = \Psi$ holds, but in general $\Psi_{g_0}$ will not be an involution. 
 
 \begin{lem}\label{lem: PSi0diff}
  For each $g_0 \in \G$, the map $\Psi_{g_0}$ is a diffeomorphism with inverse $\Psi_{g_0^{-1}}$.
 \end{lem}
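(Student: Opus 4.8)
The plan is to check by hand that $\Psi_{g_0^{-1}}$ is a two-sided inverse of $\Psi_{g_0}$; smoothness of the two maps then upgrades this to the statement that $\Psi_{g_0}$ is a diffeomorphism. For smoothness, note that $\Psi_{g_0} = \theta_{\Ad(g_0)}\circ\Psi$, where $\theta_{\Ad(g_0)}$ is post-composition by the continuous linear automorphism $\Ad(g_0)\colon\g\to\g$; since $\Psi$ is smooth by Lemma~\ref{lem: involution} and post-composition by a smooth (here linear) map is smooth, every $\Psi_{g_0}$ is smooth, and in particular so is the candidate inverse $\Psi_{g_0^{-1}}$. Hence it remains to establish the algebraic identities $\Psi_{g_0^{-1}}\circ\Psi_{g_0}=\id$ and $\Psi_{g_0}\circ\Psi_{g_0^{-1}}=\id$, and by symmetry it suffices to treat the first.

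The crucial ingredient is the ``twisted'' analogue of the computation in the proof of Lemma~\ref{lem: involution}, namely the identity
\[
 \Evol\bigl(\Psi_{g_0}(q)\bigr)(t) \;=\; g_0\,\Evol(q)(t)^{-1}\,g_0^{-1}\qquad\text{for all } q\in C^\infty(I,\g),\ t\in I.
\]
To prove it, set $\eta(t):=\text{conj}_{g_0}\bigl(\Evol(q)(t)^{-1}\bigr)=g_0\Evol(q)(t)^{-1}g_0^{-1}$. Then $\eta(0)=e$, and using that $\delta^r$ is unchanged under right translation while $\delta^r(L_a\circ f)=\Ad(a).\delta^r(f)$ — so that $\delta^r(\text{conj}_{g_0}\circ f)=\Ad(g_0).\delta^r(f)$ — together with $\delta^r(f^{-1})=-\delta^l(f)$ from \eqref{log: deriv}, $\delta^l(\Evol(q))=\Ad(\Evol(q)^{-1}).q$ and $\delta^r(\Evol(q))=q$, one computes $\delta^r(\eta)=-\Ad\bigl(g_0\Evol(q)^{-1}\bigr).q=\Psi_{g_0}(q)$. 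Thus $\eta$ and $\Evol(\Psi_{g_0}(q))$ are smooth curves issuing from $e$ with equal right logarithmic derivative, hence equal by \ref{setup: Evol}.

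Granting the identity, the computation closes quickly. Write $p:=\Psi_{g_0}(q)=-\Ad\bigl(g_0\Evol(q)^{-1}\bigr).q$; the identity gives $\Evol(p)^{-1}=g_0\Evol(q)g_0^{-1}$ pointwise, so
\[
 \Psi_{g_0^{-1}}(p)=-\Ad\bigl(g_0^{-1}\Evol(p)^{-1}\bigr).p=-\Ad\bigl(\Evol(q)g_0^{-1}\bigr).p=\Ad\bigl(\Evol(q)g_0^{-1}\bigr)\Ad\bigl(g_0\Evol(q)^{-1}\bigr).q=q,
\]
where the last step uses that $\Ad$ is a group homomorphism and $\Evol(q)g_0^{-1}g_0\Evol(q)^{-1}=e$. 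This proves $\Psi_{g_0^{-1}}\circ\Psi_{g_0}=\id$, and exchanging $g_0$ with $g_0^{-1}$ yields $\Psi_{g_0}\circ\Psi_{g_0^{-1}}=\id$; therefore $\Psi_{g_0}$ is a diffeomorphism with inverse $\Psi_{g_0^{-1}}$.

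The only step that is not pure bookkeeping is the identification of $\Evol(\Psi_{g_0}(q))$ with the conjugated inverse of $\Evol(q)$; once the logarithmic-derivative calculus of \ref{setup: Evol} and \eqref{log: deriv} is in place, the remainder amounts to manipulating $\Ad$ along a path that is forced to be constant, exactly as in Lemma~\ref{lem: involution}, so I expect no genuine difficulty there.
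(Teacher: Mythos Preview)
Your proof is correct and is essentially the same argument as the paper's, only organised slightly differently. The paper writes $\Psi_{g_0}=\Ad(g_0)\circ\Psi$, invokes the identity $\Evol(\Ad(g).f)=g\,\Evol(f)\,g^{-1}$ and then the involution $\Psi\circ\Psi=\id$ from Lemma~\ref{lem: involution}; your direct computation of $\Evol(\Psi_{g_0}(q))=g_0\Evol(q)^{-1}g_0^{-1}$ is exactly these two facts combined (the involution proof in Lemma~\ref{lem: involution} already showed $\Evol(q)\Evol(\Psi(q))=e$), so the two routes coincide at the level of the underlying calculation.
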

 \begin{proof}
  From the definition of $\Psi_{g_0}$ and Lemma \ref{lem: involution}, it is clear that $\Psi_{g_0}$ is a smooth diffeomorphism.
  We use that $\Ad \colon \G \rightarrow \text{GL} (\g)$ is a group morphism and compute 
  \begin{align*}
   \Psi_{g_0^{-1}} (\Psi_{g_0} (q)) &= \Ad (g_0^{-1}).\Psi (\Psi_{g_0} (q)) = \Ad (g_0).\Psi (\Ad (g_0).\Psi(q))\\
				    &= \Ad (g_0^{-1}).\left( -\Ad (\Evol (\Ad (g_0).\Psi(q)))^{-1}).\Ad (g_0).\Psi(q)\right)\\
				    &= -\Ad (g_0^{-1} g_0 \Evol (\Psi (q))^{-1} g_0^{-1}g_0).\Psi (q) = \Psi (\Psi (q)) =q.
  \end{align*}
 Here we used that $\Evol (\Ad (g).f) = g\Evol (f)g^{-1}, \text{ for } g\in \G.$ \end{proof}

 \begin{lem}\label{lem: straightening}
 Fix $c_0 \in \mathcal{M}$ and choose $g_0 \in \G$ with $\pi (g_0) = c_0$. 
Assume that $\mathcal{M}$ is reductive with $\g = \h \oplus \m$, then
 $$\Psi_{g_0} (C^\infty (I,\m \setminus \{0\})) = \{ f \in C^\infty (I,\g) \mid f = \theta_\omega (\dot{c}) \quad \text{for some } c \in \textup{Imm}_{c_0} (I,\mathcal{M})\}.$$
With $\Pomega$ as in \ref{setup: alphared} the formula $\theta_\omega \circ D(\rho_{c_0} \circ \Psi_{g_0} (q)) = \Psi_{g_0} (q)$ holds.
% Moreover, the formula $\theta_\omega \circ D(\rho_{c_0} \circ \Psi_{g_0} (q)) = \Psi_{g_0} (q)$ holds.
 \end{lem}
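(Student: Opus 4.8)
The plan is to establish the explicit formula first and then read the set equality off from it. The one computation I would isolate at the start is a transport rule for $\omega$ along a lift: if $c = \pi\circ g$ with $g\colon I\to G$ smooth, then differentiating and using $\pi\circ L_{g(t)^{-1}} = \Lambda^{g(t)^{-1}}\circ\pi$ from \eqref{eq: comm} gives $T\Lambda^{g(t)^{-1}}(\dot c(t)) = T_e\pi(\delta^{l}(g)(t))$; since $\omega_e$ inverts $T_e\pi|_{\m}$ and $\h = \ker T_e\pi$, the composite $\omega_e\circ T_e\pi\colon\g\to\m$ is exactly the projection $\pr_{\m}$ along $\h$, so that by the definition of $\omega$ in \ref{setup: oneform}
\[
 \omega(\dot c(t)) = \Ad(g(t)).\pr_{\m}\big(\delta^{l}(g)(t)\big).
\]
I will use this in both parts of the argument.

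For the formula I would fix $q\in C^\infty(I,\m\setminus\{0\})$ and first show, by a direct computation with logarithmic derivatives (using $\Ad(g_0)=\Lf(\mathrm{conj}_{g_0})$, the identity $\Evol\circ\Lf(\varphi)=\varphi\circ\Evol$, and $\delta^{r}(\Evol(q)^{-1}) = -\delta^{l}(\Evol(q))$ from \eqref{log: deriv}), that $\Evol(\Psi_{g_0}(q)) = \mathrm{conj}_{g_0}\circ(\Evol(q)^{-1})$. Since $\Lambda_{c_0}(k) = \pi(kg_0)$ for $c_0 = \pi(g_0)$, this makes $c := \rho_{c_0}(\Psi_{g_0}(q))$ equal to $\pi\circ g$ with $g := L_{g_0}\circ(\Evol(q)^{-1})$, so $g(0)=g_0$ and, by left-invariance of $\delta^{l}$, $\delta^{l}(g) = \delta^{l}(\Evol(q)^{-1}) = -\delta^{r}(\Evol(q)) = -q$. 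As $q$ is $\m$-valued, $\pr_{\m}(\delta^{l}(g)) = -q$, and the transport rule gives $\omega(\dot c) = -\Ad(g).q = -\Ad(g_0\Evol(q)^{-1}).q = \Psi_{g_0}(q)$, i.e.\ $\theta_\omega\circ D(\rho_{c_0}\circ\Psi_{g_0}(q)) = \Psi_{g_0}(q)$.

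Granting this, ``$\subseteq$'' is immediate: for $q\in C^\infty(I,\m\setminus\{0\})$ the curve $c := \rho_{c_0}(\Psi_{g_0}(q))$ lies in $C^\infty_{c_0}(I,\M)$ with $\theta_\omega(\dot c) = \Psi_{g_0}(q)$, which is nowhere zero since $\Ad$ is invertible and hence $\Psi_{g_0}$ preserves non-vanishing (cf.\ Lemma~\ref{lem: PSi0diff}); as $\omega$ is fibrewise injective (Proposition~\ref{prop: defn:SRVT:hom}), $c$ is an immersion. For ``$\supseteq$'' I would take $c\in\mathrm{Imm}_{c_0}(I,\M)$, set $q := \Psi_{g_0^{-1}}(\theta_\omega(\dot c))$ (automatically nowhere zero) and check that $q(t)\in\m$. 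Put $h := \Evol(\theta_\omega(\dot c))$; Proposition~\ref{prop: alphaomega:prop} gives $\pi(h(t)g_0) = c(t)$, so $g := R_{g_0}\circ h$ is a lift of $c$ with $g(0)=g_0$ and $\delta^{r}(g) = \delta^{r}(h) = \theta_\omega(\dot c) = \omega\circ\dot c$. Using $g^{-1}=g_0^{-1}h^{-1}$, the definition of $q$ becomes $q = -\Ad(g^{-1}).\delta^{r}(g) = -\delta^{l}(g)$, whence $\omega(\dot c) = \delta^{r}(g) = \Ad(g).\delta^{l}(g) = -\Ad(g).q$; but the transport rule with $\delta^{l}(g) = -q$ reads $\omega(\dot c) = -\Ad(g).\pr_{\m}(q)$. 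Cancelling the invertible $\Ad(g)$ forces $q = \pr_{\m}(q)\in\m$, and then $\Psi_{g_0}(q) = \theta_\omega(\dot c)$ by Lemma~\ref{lem: PSi0diff}, giving ``$\supseteq$''.

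The step I expect to demand the most care is the identification $\Evol(\Psi_{g_0}(q)) = \mathrm{conj}_{g_0}\circ(\Evol(q)^{-1})$ and the attendant bookkeeping --- distinguishing $\delta^{l}$ from $\delta^{r}$, keeping track of the side on which $g_0$ enters the lift, and remembering that $\Lambda_{c_0}$ intertwines with right (not left) multiplication by $g_0$. It is exactly the appearance of the inverse $\Evol(q)^{-1}$ that forces $\delta^{l}(g) = -q$ and so lets $\pr_{\m}$ drop out once $q$ is $\m$-valued.
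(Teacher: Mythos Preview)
Your proof is correct and rests on the same computations as the paper's, but is organised more cleanly. You isolate two reusable facts up front --- the transport rule $\omega(\dot c)=\Ad(g).\pr_{\m}(\delta^{l}(g))$ for any lift $g$ of $c$, and the identity $\Evol(\Psi_{g_0}(q))=\text{conj}_{g_0}(\Evol(q)^{-1})$ --- and then both inclusions reduce to reading off $\delta^{l}$ of a concrete lift ($g=g_0\Evol(q)^{-1}$ in one direction, $g=\Evol(\theta_\omega(\dot c))g_0$ in the other). The paper instead unfolds the definition of $\omega$ separately in each direction: for ``$\supseteq$'' it expands $\Psi_{g_0^{-1}}(\theta_\omega(\dot c))$ directly using the lift $\hat c=\Evol(\theta_\omega(\dot c))g_0$ and cancels adjoints to land in $\m$; for ``$\subseteq$'' it computes $\tfrac{\dif}{\dif t}\rho_{c_0}(\Psi_{g_0}(v))$ via the relation $\Evol(v)\Evol(\Psi(v))=e$ from Lemma~\ref{lem: involution}, checks it is an immersion, and then applies $\omega$. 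Your abstraction makes the role of $\pr_{\m}$ --- and hence exactly where the hypothesis $q\in\m$ is used --- more transparent; the paper's version avoids naming the intermediate identities but carries out equivalent steps inline.
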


 \begin{proof}
  Consider $c \in \text{Imm}_{c_0} (I,\mathcal{M})$ and recall from Proposition \ref{prop: alphaomega:prop} the identity $\Lambda_{c_0} (\Evol (\theta_{\omega} (\dot{c})) = \pi (\Evol (\theta_{\omega} (\dot{c}))g_0) = c$.
  Choose $\hat{c} = \Evol (\theta_{\omega} (\dot{c}))g_0$ as a smooth lift of $c$ to $\G$ and compute as follows:
    \begin{align*}
     \Psi_{g_0^{-1}} (\theta_\omega (\dot{c}))  &= \Ad(g_0^{-1}).\left(-\Ad (\Evol(\theta_\omega (\dot{c}))^{-1}).(\theta_\omega (\dot{c}))\right) \\
				     &= \Ad(g_0^{-1}).\left(-\Ad (\Evol(\theta_\omega (\dot{c}))^{-1}).\Ad (\hat{c}).\omega_e (T\Lambda^{\hat{c}^{-1}} (\dot{c}))\right)\\
				     &= \Ad(g_0^{-1}).\left(-\Ad (\Evol(\theta_\omega (\dot{c}))^{-1}).\Ad (\Evol(\theta_\omega (\dot{c}))g_0).\omega_e (T\Lambda^{\hat{c}^{-1}} (\dot{c}))\right)\\
				     &= -\omega_e (T\Lambda^{\hat{c}^{-1}} (\dot{c})) \in \m \setminus \{0\}.
    \end{align*}
  Conversely, let us show that $\Psi_{g_0} (C^\infty (I,\m \setminus \{0\}))$ is contained in the image of $\theta_\omega \circ D|_{\text{Imm}_{c_0} (I,\mathcal{M})}$.
  To this end, consider $q = \Psi_{g_0} (v)$ for $v \in C^\infty (I, \m \setminus \{0\})$. We compute 
    \begin{equation} \label{comp: immersion} \begin{aligned}
     \rho_{c_0} (q) &= \Lambda_{c_0} (\Evol (\Ad (g_0). \Psi (v))) = \pi (\Evol (\Ad (g_0). \Psi (v)g_0)) \\ 
     &= \pi (g_0 \Evol (\Psi (v))) = \pi (g_0 \Evol (-\Ad (\Evol (v)^{-1}).v)) = \Lambda^{g_0} (\pi (\Evol (\Psi (v)))).
     \end{aligned}
    \end{equation}
 Since $\Lambda^{g_0}$ is a diffeomorphism, $\rho_{c_0} (q) \colon I \rightarrow \mathcal{M}$ is an immersion if and only if the curve $\pi (\Evol (\Psi (v)))$ has a non-vanishing derivative everywhere.
 Recall from the proof of Lemma \ref{lem: involution} that $\Evol (v) \Evol (\Psi (v)) = e$, whence we compute the derivative  
 \begin{equation} \label{comp: derivative}\begin{aligned}
  \frac{\di}{\di t} \pi (\Evol (\Psi (v))(t)) &= T\pi \left( \frac{\di}{\di t} \Evol (\Psi (v))(t) \right ) = T\pi (\Psi (v) \Evol(\Psi(v)))(t)\\
					      &= T\pi (-\Ad (\Evol (v)^{-1}).v \Evol (\Psi(v)))(t)\\
					      &= -T\pi \circ (L_{\Evol (v)^{-1}(t)})_* \circ (R_{\Evol (v)(t)\Evol (\Psi (v))(t)})_* (v(t)) \\
					      &= -T\Lambda^{\Evol (v)^{-1}(t)} \circ T_e\pi (v(t)).
					      \end{aligned}
 \end{equation}
 In passing to the last line, we used that $\pi$ commutes with the left action. 
 Since $T\Lambda^{g}$ is an isomorphism, $\frac{\di}{\di t} \pi (\Evol (\Psi (v))(t))$ vanishes if and only if $v(t) \in \text{ker} T_e\pi = \h$.
 However, $v(t) \in \m\setminus \{0\}$, whence $\rho_{c_0} (\Psi_{g_0} (v)) \in \text{Imm}_{c_0} (I,\mathcal{M})$ and we can apply $\theta_\omega \circ D$ to $\rho_{c_0} (q)$. 
 A combination of \eqref{comp: immersion} and \eqref{comp: derivative} yields 
  $
   \frac{\di}{\di t} \rho_{c_0} (\Psi_{g_0} (v))(t) = -T\Lambda^{g_0(\Evol (v))^{-1}(t)} \circ T_e\pi (v(t)).
  $
With $\pi (g_0\Evol (v)^{-1}) = \rho_{c_0} (\Psi_{g_0} (v)(t))$, this yields 
 \begin{align*}
  &\theta_\omega \left( \frac{\di}{\di t} \rho_{c_0} (v(t))\right) = \theta_\omega \left( \frac{\di}{\di t} \rho_{c_0} (\Psi_{g_0} (v))(t)\right) = \theta_\omega (-T\Lambda^{g_0(\Evol (v))^{-1}(t)} \circ T_e\pi (v(t))) \\
	=& \Ad (g_0(\Evol (v))^{-1}).\omega_e (-T\Lambda^{(g_0(\Evol (v))^{-1}(t))^{-1}} T\Lambda^{g_0(\Evol (v))^{-1}(t)} \circ T_e\pi (v(t)))\\
	=& -\Ad (g_0(\Evol (v))^{-1}).\omega_e (T_e \pi (v(t)) = -\Ad (g_0(\Evol (v))^{-1}).v(t) = \Psi_{g_0} (v)(t).
 \end{align*}
 Note that as $\omega_e = (T_e\pi|_{\m})^{-1}$, we have $\omega_e (T_e \pi (v(t)) = v(t)$.
 \end{proof}
 \end{setup}
\end{appendix}

\bibliographystyle{new}
\bibliography{hom_shape}

\end{document}